\documentclass{amsproc}

\usepackage{amsmath}
\usepackage{amsthm}
\usepackage{amssymb}
\usepackage{graphicx}
\usepackage{multirow}
\usepackage{mathtools}
\usepackage{tikz-cd}
\usepackage{tikz}
\usepackage{enumerate}
\usepackage{mathdots}
\usepackage{enumitem}
\usepackage{array}
\usepackage{hyperref}
\usepackage{dynkin-diagrams}

\newtheorem{theorem}{Theorem}[section]
\newtheorem{corollary}[theorem]{Corollary}
\newtheorem{lemma}[theorem]{Lemma}
\newtheorem{proposition}[theorem]{Proposition}
\newtheorem{remark}[theorem]{Remark}
\newtheorem{example}[theorem]{Example}

\newtheorem{setup}[theorem]{Setup}

\newtheorem{question}[theorem]{Question}
\newtheorem*{theorem*}{Theorem}

\newtheorem{definition}[theorem]{Definition}

\def\Aut{\operatorname{Aut}}

\def\Pic{\operatorname{Pic}}

\def\PGL{\operatorname{PGL}}

\def\res{\operatorname{res}}
\def\Lie{\operatorname{Lie}}

\def\Stab{\operatorname{Stab}}

\def\C{{\mathbb C}}

\def\G{{\mathbb G}}

\def\P{{\mathbb P}}
\def\Q{{\mathbb Q}}
\def\R{{\mathbb R}}
\def\Z{{\mathbb Z}}

\def\cD{{\mathcal D}}

\def\cO{{\mathcal O}}

\def\cR{{\mathcal R}}
\def\cS{{\mathcal S}}

\def\cU{{\mathcal U}}

\title[Automorphism groups of toroidal horospherical varieties]{Automorphism groups of toroidal horospherical varieties}

\author[L. Barban]{Lorenzo Barban}
\address{Center for Complex Geometry, Institute for Basic Science (IBS), 55 Expo-ro, Yuseong-gu, Daejeon, 34126, Republic of Korea}
\email{lorenzobarban@ibs.re.kr}

\author[D. Hwang]{DongSeon Hwang}
\address{Center for Complex Geometry, Institute for Basic Science (IBS), 55 Expo-ro, Yuseong-gu, Daejeon, 34126, Republic of Korea}
\email{dshwang@ibs.re.kr}

\author[M. Kwon]{Minseong Kwon}
\address{Morningside Center of Mathematics, Academy of Mathematics and Systems Science, Chinese Academy of Sciences, Beijing, 100190, China}
\email{minseong@amss.ac.cn}

\subjclass[2010]{14M27, 14M25, 14J50, 14L30}

\begin{document}
\begin{abstract}
	We establish a structure theorem for the connected automorphism groups of smooth complete toroidal horospherical varieties, that is, toric fibrations over rational homogeneous spaces. The key ingredient is a characterization of the Demazure roots of toric fibers that extend to the total space.
    In particular, we provide a criterion for the reductivity of the connected automorphism groups of such varieties. As an application, we prove the K-unstability of certain $\mathbb{P}^1$-bundles over rational homogeneous spaces.
\end{abstract}  

\maketitle

\section{Introduction}
The automorphism group of an algebraic variety encodes rich information about its geometry. For a smooth complete rationally connected variety $X$ over the field $\mathbb{C}$ of complex numbers, the identity component $\mathrm{Aut}^0(X)$ is a linear algebraic group. Finding a criterion for the reductivity of  $\mathrm{Aut}^0(X)$ has received considerable  attention, partly  motivated by the Matsushima--Lichnerowicz theorem (\cite{Matsushima, Lichnerowicz}) which asserts that if a  compact K\"ahler manifold $X$ admits a constant scalar curvature metric (in particular, a K\"ahler--Einstein metric when $X$ is Fano), then $\mathrm{Aut}^0(X)$ is reductive.    

The structure of $\mathrm{Aut}^0(X)$ is well understood for several important classes of spherical varieties, which are  rationally connected varieties endowed with rich group actions. For example, if $X$ is a rational homogeneous space the automorphism group is completely determined (\cite{Onishchik, DemazureRHS}): in particular,  $\mathrm{Aut}^0(X)$ is semisimple, and hence reductive. Wonderful varieties can be regarded as generalizations of rational homogeneous spaces, and $\mathrm{Aut}^0(X)$ for a wonderful variety $X$ is also semisimple (\cite{PezziniWonderful}). 

In contrast, toric varieties form another well-known class of spherical varieties whose connected automorphism groups exhibit a rather different behavior. For a complete toric variety $X$,  $\mathrm{Aut}^0(X)$ is in general neither semisimple nor reductive, and its structure can be described explicitly in terms of Demazure roots (\cite{demazure}).  More precisely, the dimension of $\mathrm{Aut}^0(X)$ can be computed as the sum of the dimension of the variety and the number of Demazure roots:
\[ \dim(\Aut^0(X)) = \dim(X) + |\cS(\Sigma)| + |\cU(\Sigma)| \]
where $\cS(\Sigma)$ and $\cU(\Sigma)$ denote respectively the sets of semisimple and unipotent Demazure roots of the fan $\Sigma$ associated with $X$. In particular, $\mathrm{Aut}^0(X)$ is reductive if and only if $\cU(\Sigma) = \emptyset$. See Theorem \ref{theorem:liendo} (2) and (4).

Horospherical varieties occupy an intermediate position between these classes: they are birational to toric bundles over rational homogeneous spaces, thereby simultaneously generalize both rational homogeneous spaces and toric varieties.
For a smooth complete horospherical variety $X$, $\Aut^{0}(X)$ has been investigated in various contexts.
For instance, when the Picard number of $X$ is at most two, $\Aut^{0}(X)$ is studied as a consequence of the classification of such an $X$, see \cite{PasquierAut} and \cite{villeneuve}.

Recall that a horospherical variety is in particular a spherical variety, i.e., a normal $G$-variety containing an open dense $B$-orbit where $G$ is a connected reductive linear algebraic group and $B$ is a Borel subgroup of $G$.
The connected automorphism group $\Aut^{0}(X)$ of a spherical variety $X$ has been extensively studied in the literature.
On the one hand, if $X$ is smooth, complete and toroidal, meaning that every $B$-invariant divisor is $G$-invariant, then $\Aut^{0}(X)$ is studied after \cite{BienBrion}. In this case, a description of Levi subgroups, i.e., maximal connected reductive subgroups of $\Aut^{0}(X)$, is given in \cite{PezziniReductive}.
On the other hand, $B$-normalized $\G_{a}$-subgroups of the automorphism group and their action on (quasi-)affine spherical varieties have been systematically studied by \cite{ArzhantsevRoot, AZ2, AZ3}. There has been a recent progress to generalize this approach on arbitrary spherical varieties (cf. \cite{AZ}). Still, a complete characterization of the reductivity of the automorphism group of spherical varieties is missing.

In this paper we study the structure of $\mathrm{Aut}^0(X)$ for a smooth complete toroidal horospherical variety $X$ with a global approach. Let us recall that a complete toroidal horospherical variety is a complete toric bundle $\Phi\colon X\to G/P$ extending the fiber bundle $\varphi\colon G/H\to G/P$ with an algebraic torus $S$ as a fiber. 
Our approach characterizes the Demazure roots of the toric fibers that induce $\mathbb{G}_a$-actions on the total space. As in the toric case, we introduce the sets $\cS^{+}_{G}(X)$ and $\cU^{+}_{G}(X)$ of semisimple and unipotent Demazure roots for $X$, respectively, which allow us to compute the dimension of $\mathrm{Aut}^0(X)$ and to characterize its reductivity in terms of the generalized Demazure roots.

\begin{theorem}[Corollary \ref{coro:LieAut(X)} and \ref{coro:reductive}]\label{main1}
Let $X$ be a smooth complete toroidal horospherical variety,  that is, a complete toric bundle over a rational homogeneous space $G/P$ with a fiber $F$. Then the following hold.
    \begin{enumerate}
        \item Denote by $V(m)$ the irreducible $G$-module of highest weight $m$ with respect to a fixed Borel subgroup $B^+$. Then
        \[\dim \Aut^{0}(X) =\dim \Aut^{0}(G/P) + \dim F + |\cS^{+}_{G}(X)| + \sum_{m \in \cU^{+}_{G}(X)} \dim V(m).\] 
        \item $\Aut^{0}(X)$ is reductive if and only if $\cU^{+}_{G}(X) = \emptyset$.
    \end{enumerate} 
\end{theorem}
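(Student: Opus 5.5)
We argue at the level of Lie algebras and take $\mathfrak{a}=\Lie\Aut^0(X)=H^0(X,T_X)$; by Blanchard's lemma $\Aut^0(X)$ acts on $G/P$ compatibly with $\Phi$. This gives an exact sequence
\[ 0\to H^0(X,T_{X/G/P})\to H^0(X,T_X)\to H^0(G/P,T_{G/P}), \]
and the last map is surjective. Surjectivity holds because $X\cong G\times^P F$ and the action of $\Aut^0(G/P)$ lifts. When $\Aut^0(G/P)\neq G/Z$, one replaces $G$ by a larger group $\hat G$ acting on $G/P$. The $S$-bundle structure extends precisely when the characters defining the toric bundle extend to the corresponding parabolic of $\hat G$. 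So we first reduce to the case where $G$ is chosen maximal, so that $\Lie\Aut^0(G/P)=\mathfrak g$ (modulo centre). This uses the known description of $\Aut^0(G/P)$ due to Onishchik and Demazure. I expect this reduction to be a minor but necessary bookkeeping step; the sets $\cS^+_G(X),\cU^+_G(X)$ should be insensitive to it.

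The main step is to compute $H^0(X,T_{X/G/P})$ as a $G$-module. Write $X=G\times^P F$ where $P$ acts on $F$ through a character map $P\to S$. Then $T_{X/G/P}=G\times^P T_F$, so
\[ H^0(X,T_{X/G/P})=\operatorname{Ind}_P^G H^0(F,T_F). \]
By Demazure's theorem on toric varieties (Theorem \ref{theorem:liendo}), $H^0(F,T_F)=\Lie S\oplus\bigoplus_{e\in \cS(\Sigma)\cup\cU(\Sigma)}\C\,\partial_e$ as an $S$-module, with weights the Demazure roots $e$. The group $P$ acts on this space through $P\to S$, so each root space is a $P$-character $\chi_e$. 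The character of $P$ here is obtained by pulling back $e\in M$ via the lattice map $M\to\mathfrak X(P)$. Borel--Weil then gives
\[ \operatorname{Ind}_P^G \chi_e = H^0(G/P,L_{\chi_e}), \]
which is either $0$ or $V(m)^*$ (or $V(m)$, depending on conventions) with $m$ the corresponding dominant weight. We define $\cS^+_G(X)$ and $\cU^+_G(X)$ as the roots with nonzero induced module, and this yields formula (1). For $e\in\cS(\Sigma)$, the root $-e$ is also a root, and both induced line bundles are nonzero only if $\chi_e$ is trivial on $P$. Hence they contribute dimension $1$ each, consistent with $|\cS^+_G(X)|$. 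The summand $\Lie S$ gives $\dim F$.

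The main obstacle is proving the left-exact sequence is compatible with this decomposition, i.e.\ that the global sections of $\operatorname{Ind}$ are really all of $H^0(T_{X/G/P})$ and not affected by the boundary. I would handle this by using that $T_F$ sits in the Euler-type sequence on toric varieties, or directly via $\Pi$-graded sections on the open orbit: a vertical field is a $P$-equivariant map $G\to H^0(F,T_F)$, and regularity along the boundary can be checked fiberwise.

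For (2), $\mathfrak a$ is an extension of $\mathfrak g$ (reductive) by the ideal $\mathfrak n=H^0(T_{X/G/P})$. Within $\mathfrak n$, $\Lie S$ and the semisimple roots generate a reductive part. Each $V(m)$ with $m\in\cU^+_G(X)$ consists of vector fields whose fiberwise components are unipotent root fields $\partial_e$. These fields span an abelian ideal, or at least an ideal acting nilpotently, via the standard fact that unipotent Demazure roots of a fan commute modulo lower terms as in Demazure's proof. So the unipotent radical of $\Aut^0(X)$ equals $\bigoplus_{m\in\cU^+_G(X)} V(m)$, and $\Aut^0(X)$ is reductive iff $\cU^+_G(X)=\emptyset$. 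Conversely, if $\cU^+_G(X)=\emptyset$, then $\mathfrak a$ is a reductive $\mathfrak g\oplus\Lie S\oplus(\text{roots})$, where the semisimple roots generate a product of $\mathfrak{gl}$'s.
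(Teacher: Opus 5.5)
Part (1) is essentially the paper's proof of Lemma~\ref{lemma: key lemma} and Corollary~\ref{coro:LieAut(X)}, with some unproved links; there is a genuine gap in (2).

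\textbf{Part (1).} You follow the paper's route: relative tangent sequence, $\Phi_{*}T_{\Phi}\simeq G\times^{P}H^{0}(F,T_{F})$, Demazure's theorem, then Borel--Weil.

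\begin{itemize}
\item Lifting through a larger group $\hat G$ is an acceptable substitute for Brion's Theorem~\ref{thm:brion}, but you have only asserted that the characters extend. They do: after passing to simply connected covers, both $M_{P}$ and $M_{\hat P}$ are identified with $\Pic(G/P)$.
\item The boundary issue you flag is not an obstacle, since $\Phi$ is locally trivial with fiber $F$.
\item You cannot simply \emph{define} $\cS^{+}_{G}(X)$ and $\cU^{+}_{G}(X)$ by nonvanishing of induced modules. The statement refers to $\cR^{+}_{G}(X)=\{m\in\cR_{S}(F):\langle m,\epsilon^{+}(\cD^{+})\rangle\ge 0\}$. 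Matching this with $B^{+}$-dominance is Lemma~\ref{lemma: key lemma}(1), which uses Proposition~\ref{prop:color is coroot} together with $\langle m,\check\alpha\rangle=0$ for $\alpha\in\Pi^{+}\setminus\Pi^{+}_{P}$.
\item With $P\supset B^{-}$ the induced module is $V(m)$ for $B^{+}$-dominant $m$, not $V(m)^{*}$. Your hedge is not harmless, because the convention decides which of $\pm e$ lies in $\cR^{+}_{G}(X)$.
\item If $e$ and $-e$ both contribute, this does not mean $e$ is trivial on $P$. It means $e$ is orthogonal to all simple coroots, so $\dim V(e)=1$.
\end{itemize}

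\textbf{The gap in (2).} You claim that fields in $V(m)$, $m\in\cU^{+}_{G}(X)$, restrict on fibers to \emph{unipotent} Demazure root fields, and you then invoke Demazure's unipotent roots spanning a nilpotent ideal. But $\cU^{+}_{G}(X)$ is not $\cU(\Sigma)\cap\cR^{+}_{G}(X)$. It also contains every dominant $m\in\cS(\Sigma)$ whose negative $-m\in\cR_{S}(F)$ is not dominant.

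Take $X=\P_{Y}(\cO_{Y}\oplus L)$ with $L$ nef and nontrivial, for instance the blow-up of $\P^{n+1}$ at a point. Here $F=\P^{1}$, so $\cU(\Sigma)=\emptyset$. Yet $\cU^{+}_{G}(X)\neq\emptyset$, and the corresponding $V(m)\simeq H^{0}(Y,L)$ is the Lie algebra of the unipotent radical. On each fiber these fields are root vectors of $\Lie(\PGL_{2})$, which together with their opposites generate a reductive algebra. So your mechanism says nothing in exactly the case used in Section~\ref{section:applications}.

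What is missing is a proof that $\bigoplus_{m\in\cU^{+}_{G}(X)}V(m)$ is the Lie algebra of a normal unipotent subgroup, and that the remainder is reductive \emph{as a group}. Reductivity of a Lie algebra does not give this, so your converse direction is also unsupported.

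The paper obtains both from the restriction map $\res\colon K^{0}\to\Aut^{0}(F)$:
\begin{itemize}
\item The image of $\res$ has Lie algebra $\Lie(S)\oplus\bigoplus_{m\in\cR^{+}_{G}(X)}\C_{m}$.
\item $\cR^{+}_{G}(X)$ is closed under addition inside $\cR_{S}(F)$, since sums of dominant weights are dominant. Hence the Levi part of the image is generated by $S$ and $U_{m}$ for $m\in\cS^{+}_{G}(X)$, and its unipotent radical by $U_{m}$ for $m\in\cU^{+}_{G}(X)$.
\item $\ker(\res)$ is unipotent by Proposition~\ref{prop-unip-kernel}, which uses that $\Aut_{G}(X)$ is a maximal torus of $K^{0}$ (via Brion's theorem).
\item Therefore the kernel of $K^{0}\to\res(K^{0})/R^{u}(\res(K^{0}))$ has identity component $R^{u}(K^{0})=R^{u}(\Aut^{0}(X))$, with Lie algebra $\bigoplus_{m\in\cU^{+}_{G}(X)}V(m)$.
\end{itemize}

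Your fiberwise idea can be repaired along these lines. A vertical field is determined by its restrictions to the fibers, and each restriction preserves Jordan decompositions. But the input must be this structure of $\res(K^{0})$, not Demazure's unipotent roots.
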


A key step toward Theorem \ref{main1} is the description of $\mathbb{G}_a$-actions on $X$ in terms of two combinatorial data: the Demazure roots of the toric fiber $F$ and the color map 
\[
\epsilon^{+}: \{\text{prime $B^{+}$-divisors on $G/H$}\} \longrightarrow N_{S} \coloneqq (M_{S})^{\vee}.
\]
See \S \ref{subsection:horospherical} for the precise definition. This leads to the notion of $B^{+}$-roots of $X$ (Definition \ref{definition:horospherical roots}), which generalizes Demazure roots to the horospherical setting. 

To describe the extending condition for $\G_a$-actions on the toric variety $F$ to $X$, we use the notion of an $L$-normalized $\G_a$-action, that is, a $\G_a$-action compatible with the $L$-action up to a character of $L$, equivalently, the conjugation by $L$ rescales the $\G_a$-parameter by that character. Denote by $\cR^{+}_{G}(X)$ the set of Demazure roots for $X$, i.e., the union of $\cS^{+}_{G}(X)$ and $\cU^{+}_{G}(X)$.

\begin{theorem}[Theorem \ref{theorem:B-normalized Ga action}]\label{mainthm:lifting}
For each Demazure root $m$ of the $S$-toric variety $F$,  the $S$-normalized $\G_{a}$-action on $F$ associated to $m$ extends to a $B^{+}$-normalized $\G_{a}$-action on $X$ preserving every fiber of $\Phi$ if and only if $m \in \cR^{+}_{G}(X)$.
If this holds, then the $B^{+}$-normalized extension is unique.
\end{theorem}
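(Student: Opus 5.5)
The plan is to reduce the extension problem to a local computation on the open $B^{+}$-stable chart, using the structure $X = G\times^{P} F$ (more precisely $X = G\times^{P^{-}}F$ with $P^{-}$ acting on $F$ through $P^{-}\to P^{-}/H \cong S$). I assume the convention that the color map $\epsilon^{+}$ sends the colors $D_\alpha$, for $\alpha$ a simple root not in the Levi of $P$, to the images in $N_S$ of the coroots $\alpha^\vee$ (restricted to $M_S\subset \mathfrak X(P)$). I would then take the big cell $U^{+}_{P}\cdot P^{-}/P^{-}\subset G/P^{-}$, where $U^{+}_P$ is the unipotent radical opposite to $P^-$. Its preimage $X_0 = \Phi^{-1}(U^{+}_P P^{-}/P^{-})\cong U^{+}_{P}\times F$ is $B^{+}$-stable, and it is the complement of the closures of the colors.

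A $\G_a$-action preserving every fiber of $\Phi$ corresponds to a locally nilpotent vector field tangent to the fibers. On $X_0$ such a field is a family of vector fields on $F$ parametrized by $U^{+}_P$. The $B^{+}$-normalized condition, with $B^{+}=T\ltimes U^{+}$, forces the following. First, $T$ acts through a character. Second, $U^{+}$-equivariance, combined with the way $U^{+}_P$ and the $T$-weights twist the fiber coordinates, means the field must be of the form $f\cdot \partial_m$. Here $\partial_m$ is the Demazure derivation on $F$ attached to $m$, and $f$ is a $U^{+}$-invariant, $T$-semi-invariant rational function on $G/P^{-}$ in the $B^{+}$-chart. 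By normalization $f$ must be essentially constant on the cell, so the candidate is $\partial_m$ itself, pulled back along the trivialization.

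Existence and uniqueness on $X_0$ are therefore automatic. This gives uniqueness of the $B^{+}$-normalized extension, because the extension is determined on the dense open set $X_0$.

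The main step is to decide when this vector field on $X_0$ extends regularly across the divisors $\Phi^{-1}(\overline{D_\alpha})$, the pullbacks of the Schubert divisors. I would compute in a neighbourhood of a generic point of $\Phi^{-1}(\overline{D_\alpha})$ using a second chart $s_\alpha U^{+}_P P^{-}$. The transition function between the trivializations multiplies the fiber torus coordinates by $t^{\langle\cdot,\epsilon^{+}(D_\alpha)\rangle}$, where $t$ is a local equation of $D_\alpha$. A homogeneous derivation $\chi^{m}\,\partial$ of degree $m$ therefore acquires the factor $t^{\langle m,\epsilon^{+}(D_\alpha)\rangle}$ (up to sign convention). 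Regularity along the color $D_\alpha$ is then equivalent to $\langle m, \epsilon^{+}(D_\alpha)\rangle \ge 0$, or the analogous inequality in the paper's Definition \ref{definition:horospherical roots}. Together with $m$ being a Demazure root of $F$, this is exactly the condition $m\in\cR^{+}_G(X)$. Since $X$ is smooth and normal, regularity in codimension one suffices (Hartogs), and a regular locally nilpotent field on a complete variety integrates to a $\G_a$-action.

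I expect the main obstacle to be making this transition computation precise. This means identifying the exponent with the pairing against $\epsilon^{+}(D_\alpha)$, including the sign conventions. It also means checking that the coordinate change along $U^{+}_P$ introduces no additional poles, which could come from the $U^{+}_P$-components mixing with the fiber direction. I would handle this via a rank-one reduction: restrict to the $\P^1$-bundle over the $SL_2$-orbit attached to $\alpha$, where the computation becomes the standard toric one for a Hirzebruch-type toric variety.
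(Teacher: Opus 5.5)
Your proposal is correct, but it takes a different route from the paper. The paper does not work in local charts. It deduces the theorem from Lemma~\ref{lemma: key lemma}: it identifies $\Lie(K^{0})$ with $H^{0}(G/P,\Phi_{*}T_{\Phi})$, where $\Phi_{*}T_{\Phi}$ is the homogeneous bundle with fibre $H^{0}(F,T_{F})=\Lie(S)\oplus\bigoplus_{m\in\cR_{S}(F)}\C_{m}$. Borel--Weil--Bott then gives $\Lie(K^{0})=\Lie(\Aut_{G}(X))\oplus\bigoplus_{m\in\cR^{+}_{G}(X)}V(m)$, with dominance of $m$ translated into $\langle m,\epsilon^{+}(\cD^{+})\rangle\ge 0$ via Proposition~\ref{prop:color is coroot}. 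The $\G_{a}$-action comes from the subgroup whose Lie algebra is the $B^{+}$-highest weight line of $V(m)$, and uniqueness comes from multiplicity-freeness.

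You instead write down the candidate field on $X_{0}\cong U^{+}_{P}\times F$ (with $U^{+}_{P}=R^{u}(P^{+})$) and test its extension across each $\Phi^{-1}(\overline{D_{\alpha}})$. The field is forced to be the constant family $\partial_{m}$ by $U^{+}$-invariance, because $U^{+}$ acts only on the first factor and $U^{+}_{P}$ acts simply transitively on it. This is in effect the big-cell proof of Borel--Weil specialised to this situation. Your route is more elementary and shows directly why the color map governs the extension. The paper's route yields the whole $G$-module $\Lie(K^{0})$ at once, which it reuses for the dimension formula and the Levi decomposition. Your method could recover this too, since it shows that every $B^{+}$-eigenvector of nonzero weight in $H^{0}(X,T_{\Phi})$ is a multiple of such a field.

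The obstacles you anticipate are milder than you fear.
\begin{itemize}
\item There is no mixing of base and fibre directions. The transition maps between your two trivializations are fibrewise translations by elements $\bar q\in S$ depending on the base point, so a vertical field is merely rescaled by $\chi^{m}(\bar q)$.
\item The sign is not a matter of convention. On $S$ one has $\partial_{m}=\chi^{m}v_{\rho_{m}}$ (Theorem~\ref{theorem:liendo}). Hence on $G/H$ your field equals $f_{m}\,v$, where $f_{m}$ is the $B^{+}$-eigenfunction extending $\chi^{m}$ from $S$, and $v$ is (up to sign) the $G$-invariant, nowhere-vanishing vertical field generated by $\lambda_{\rho_{m}}$ through $\Aut_{G}(X)\cong S$. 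The generic point of $\Phi^{-1}(\overline{D_{\alpha}})$ lies in $G/H$, so the order of the field along it is $\nu_{D_{\alpha}}(f_{m})=\langle m,\epsilon^{+}(D_{\alpha})\rangle$ by the very definition of the color map. The condition is therefore exactly $\ge 0$, and no rank-one reduction is needed.
\item ``Locally nilpotent'' is an affine notion. Replace it by the observation that the extended field is a $T$-weight vector of nonzero weight in $\Lie(\Aut^{0}(X))$, hence nilpotent, so it integrates to a $\G_{a}$-subgroup.
\end{itemize}
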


Let us emphasize that, in general, $B^{+}$-normalized $\G_{a}$-actions on horospherical varieties are not uniquely determined by their weights (cf. \cite[\S~5.5]{ArzhantsevRoot}).

Now Theorem \ref{main1} can be obtained, together with Theorem \ref{mainthm:lifting}, from the following structure theorem for $\mathrm{Aut}^0(X)$ describing the Levi decomposition of the Lie group $\mathrm{Aut}^0(X)$. For $m \in \cR^{+}_{G}(X)$, we denote by $U^{+}_{m}$ the $\G_{a}$-subgroup of $\Aut^{0}(X)$ associated to the $B^{+}$-normalized $\G_{a}$-action on $X$.

\begin{theorem}[Theorem \ref{thm:structure of K}      (\ref{Kstructure1}), and Corollary \ref{coro:reductive} (\ref{item:coro:reductive2})]\label{mainthm:Levi decomp} $ $
    \begin{enumerate}    
    \item 
    The unipotent radical $R^{u}(\Aut^{0}(X))$ is generated by $U^{+}_{m}$ for $m \in \cU^{+}_{G}(X)$ and their $G$-conjugates. As a $G$-module, $\Lie(R^{u}(\Aut^{0}(X)))$ is the multiplicity-free direct sum of irreducible $G$-modules with $B^{+}$-highest weights $m \in \cU^{+}_{G}(X)$.

    \item \label{item:mainthm3}
    A connected reductive subgroup $L$ of $\Aut^{0}(X)$ is generated by $G$, $\Aut_{G}(X)$ and $U^{+}_{m}$ for $m \in \cS^{+}_{G}(X)$. As a $G$-module, $\Lie (L)$ is the multiplicity-free direct sum of the Lie algebra of the closed subgroup of $\Aut^{0}(X)$ generated by $G$ and $\Aut_{G}(X)$, and 1-dimensional $G$-modules with weights $m \in \cS^{+}_{G}(X)$.
    Moreover, $L$ is a Levi subgroup of $\Aut^{0}(X)$, provided that the natural morphism $G \rightarrow \Aut^{0}(G/P)$ is surjective.
\end{enumerate}
\end{theorem}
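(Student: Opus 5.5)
We plan to analyse $\mathfrak{g}_X := \Lie(\Aut^{0}(X))$ as a $G$-module, identified with the space $H^0(X, T_X)$ of global vector fields. The starting point is the relative tangent sequence of $\Phi\colon X \to G/P$:
\[
0 \to T_{X/(G/P)} \to T_X \to \Phi^{*}T_{G/P} \to 0 .
\]
Since $X$ is a toric bundle over $G/P$, the relative tangent bundle is the toric-fibre tangent bundle twisted along the $P$-torsor. Taking global sections gives an exact sequence of $G$-modules
\[
0 \to H^0(X,T_{X/(G/P)}) \to \mathfrak{g}_X \to H^0(G/P, T_{G/P}) = \Lie \Aut^{0}(G/P).
\]
Here we use $\Phi_*\cO_X = \cO_{G/P}$. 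The kernel $\mathfrak{k}$ is the Lie algebra of the subgroup $K$ of automorphisms preserving every fibre; this is the group studied in Theorem~\ref{thm:structure of K}.

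\textbf{Step 1: weight decomposition of $\mathfrak{k}$.} We decompose $\mathfrak{k}$ under $S$, acting by $\Aut_G(X)\supset S$, into $S$-weight spaces.
\begin{itemize}
\item The weight-$0$ part is spanned by vector fields coming from $\Lie S$, which gives $\dim F$ of the dimension count.
\item A nonzero weight can only be a Demazure root $m$ of the fibre fan. Restricted to a fibre, such a field is a multiple of the Demazure derivation $\partial_m$.
\end{itemize}
For each $m$, the weight-$m$ isotypic part is then a $G$-module of sections of a homogeneous line bundle on $G/P$, namely $H^0(G/P, \mathcal{L}_m)$. Its character is determined by $\langle \epsilon^{+}(D), m\rangle$ over the colors $D$. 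By Borel--Weil this space is either $0$, or the irreducible module $V(m)$ with $B^{+}$-highest vector. A highest vector is exactly a $B^{+}$-normalized $\G_a$-action, so by Theorem~\ref{mainthm:lifting} it is nonzero precisely when $m\in\cR^{+}_G(X)$, and it is then unique up to scalar. This yields multiplicity-freeness.

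Finally, $V(m)$ is one-dimensional iff $-m$ also occurs, i.e.\ iff $m\in\cS^{+}_G(X)$. Otherwise $m\in\cU^{+}_G(X)$, and $V(m)$ is nontrivial irreducible.

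\textbf{Step 2: the unipotent radical.} Put $\mathfrak{n} := \bigoplus_{m\in\cU^{+}_G(X)} V(m)$.
\begin{itemize}
\item $\mathfrak{n}$ is an ideal. Brackets of weight spaces add weights, and a unipotent root $m$ plus any root lands in a unipotent root or vanishes; this follows from the fan condition defining $\cU$ as in Demazure's toric argument.
\item $\mathfrak{n}$ is nilpotent, since all its $S$-weights lie in an open half-space determined by the unipotent condition.
\item $\mathfrak{n}$ is $G$-stable, and it is generated by the $U^{+}_m$ and their $G$-conjugates because $V(m)$ is generated by its highest vector.
\end{itemize}
The complement consists of $\mathfrak{g}$, $\Lie\Aut_G(X)$ and the lines for $m\in\cS^{+}_G(X)$, which pair $\pm m$ into $\mathfrak{sl}_2$'s. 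This complement is reductive, being a subalgebra with a Cartan-type root decomposition symmetric under $m\mapsto -m$.

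\textbf{Step 3: passing to $\Aut^{0}(X)$.} The image of $\mathfrak{g}_X \to \Lie\Aut^0(G/P)$ contains the image of $\mathfrak{g}$. When $G\to\Aut^0(G/P)$ is surjective, the image is the whole reductive algebra. Hence $R^u(\Aut^0(X))\subset K$, and $R^u(\Aut^0(X))=R^u(K)$ because $\mathfrak{n}$ is $\mathfrak{g}_X$-stable; $\mathfrak{g}$-stability is clear, and the only remaining check is the bracket with $\Lie \Aut_G(X)$. This proves (1), and $L$ as in (2) is a Levi subgroup. Without the surjectivity hypothesis, $L$ is still connected reductive, which is what (2) asserts in general.

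The main obstacle is Step 1: identifying the weight-$m$ part of $\mathfrak{k}$ with $H^0(G/P,\mathcal L_m)$ and showing it is irreducible with highest weight exactly $m$. I would first try to compute $\mathcal L_m$ via the color map $\epsilon^{+}$ and then rely on Theorem~\ref{mainthm:lifting} to match highest vectors with $B^{+}$-normalized actions.
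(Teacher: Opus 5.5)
Your Step 1 is the paper's Lemma~\ref{lemma: key lemma}(\ref{item:keylemma2}), and it is not really an obstacle. $\Phi_*T_\Phi\simeq G\times^P H^0(F,T_F)$, and the $P$-action on $H^0(F,T_F)$ factors through $S$, so this bundle splits into homogeneous line bundles. Borel--Weil together with Lemma~\ref{lemma: key lemma}(\ref{item:keylemma1}) then gives non-vanishing and multiplicity one directly. Invoking Theorem~\ref{mainthm:lifting} there is unnecessary, and it runs backwards, since the paper deduces that theorem from this very decomposition.

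\textbf{Gap in Step 2.} You say $\mathfrak{n}$ is an ideal and nilpotent ``as in Demazure's toric argument'', but $\cU^{+}_{G}(X)\neq\cU(\Sigma)$.
\begin{enumerate}
\item $\cU^{+}_{G}(X)$ contains roots $m$ that are semisimple for the fibre: $-m\in\cR_S(F)$, but $\langle -m,\epsilon^+(D)\rangle<0$ for some color $D$. Such $m$ lie in the Levi part of $\Aut^0(F)$. The grading $\sum_\rho\rho$ that makes $\cU(\Sigma)$ positive in the toric case gives them weight $0$. So neither the ideal property nor the half-space property transfers; you need an argument mixing the fan with $\epsilon^+$ (for instance, $\sum_\rho\rho+\sum_D\epsilon^+(D)$ is positive on $\cU^+_G(X)$).
\item Your claim that $\dim V(m)=1$ forces $m\in\cS^+_G(X)$ is false. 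Take $X=\P^1\times\mathbb{F}_1$ with $G=SL_2\times S$: all colors pair to zero, and the toric-unipotent roots of $\mathbb{F}_1$ give one-dimensional summands of the radical.
\item ``$\mathfrak{n}$ is a nilpotent Lie algebra'' does not make it the Lie algebra of a unipotent subgroup, since tori have abelian Lie algebras. You need that $\mathfrak{n}$ consists of nilpotent elements.
\item A reductive complementary Lie algebra does not yield a reductive group ($\G_a$ is a counterexample). The reductivity of the complement has to be argued at the group level.
\end{enumerate}

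\textbf{Gap in Step 3.} Part (1) carries no hypothesis, yet Step 3 obtains $R^u(\Aut^0(X))\subset K$ only when $G\to\Aut^0(G/P)$ is surjective. In the exceptional cases of Theorem~\ref{thm:Aut G/P} you know nothing about $\Phi_*(\Aut^0(X))$ beyond the fact that it contains the image of $G$. The missing input is Brion's Theorem~\ref{thm:brion}: $\Phi_*$ is onto the reductive group $\Aut^0(G/P)$ (Corollary~\ref{eqn:Blanchard}). Hence $R^u(\Aut^0(X))=R^u(K^0)$ always, by normality alone, with no bracket checks needed.

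\textbf{How the paper handles the structure of $K^0$.} It bypasses your combinatorics.
\begin{enumerate}
\item It applies Demazure's theorem to the image $\res(K^0)\subset\Aut^0(F)$, whose roots are $\cR^+_G(X)$.
\item It lifts the result back using that $\ker(\res)$ is unipotent (Proposition~\ref{prop-unip-kernel}).
\item It defines $K^{\mathrm{Levi}}$ from the $G$-stable subalgebra $\Lie(\Aut_G(X))\oplus\bigoplus_{m\in\cS^+_G(X)}V(m)$. This is reductive because $d(\res)$ maps it isomorphically onto a Levi subgroup of $\res(K^0)$.
\item Statement (2) follows because $G$ normalizes $K^{\mathrm{Levi}}$ and an extension of reductive groups is reductive.
\end{enumerate}

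\textbf{How to rescue your weight approach.} Place $\Aut_G(X)$ inside a Levi subgroup of $K^0$; the weights of that Levi subgroup are symmetric.
\begin{enumerate}
\item For $m\in\cU^+_G(X)$, $-m$ is not a weight of $\Lie(K^0)$, so $V(m)\subset\Lie R^u(K^0)$.
\item For $m\in\cS^+_G(X)$, the bracket $[V(m),V(-m)]$ is nonzero, because it restricts to $[\partial_m,\partial_{-m}]\neq 0$ on $F$, and it has weight $0$. The weight-$0$ space is $\Lie(\Aut_G(X))$, which meets the radical trivially, so $V(m)$ cannot lie in the radical.
\end{enumerate}
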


By the Lie group-Lie algebra correspondence, one can deduce the structure of $\Aut^{0}(X)$ from Theorem~\ref{mainthm:Levi decomp}, namely Theorem~\ref{main1}.
In Theorem \ref{mainthm:Levi decomp} (\ref{item:mainthm3}), we remark that the surjectivity of the morphism $G \rightarrow \Aut^{0}(G/P)$ is not a restrictive condition.
Indeed, this condition always holds by replacing $G$ by the bigger reductive subgroup $\Aut^{0}(X,\, \partial_{G}X)$ of $\Aut^{0}(X)$, called the group of \emph{completely regular automorphisms} (see \cite{brionWonderful}), if necessary (cf. Remark \ref{rmk:G->Aut often surjects}).
Thanks to this, we show the following:

\begin{corollary}[Corollary \ref{corollary:LeviSubgroup}]\label{corollary:levi}
    A Levi subgroup of $\Aut^{0}(X)$ is generated by $\Aut^{0}(X,\, \partial_{G}X)$ and $U^{+}_{m}$ for $m \in \cS^{+}_{G}(X)$.    
\end{corollary}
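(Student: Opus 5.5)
The plan is to reduce Corollary \ref{corollary:levi} to Theorem \ref{mainthm:Levi decomp} (\ref{item:mainthm3}) by enlarging the acting group. I would replace $G$ by $G' := \Aut^{0}(X,\,\partial_{G}X)$, the group of completely regular automorphisms. I would then check that $X$ is still a smooth complete toroidal horospherical $G'$-variety with the same structure as before, namely a toric bundle over $G'/P'$ with the same fiber $F$. For this comparison I would lean on the theory of completely regular automorphisms for toroidal varieties (\cite{brionWonderful}, as invoked in Remark \ref{rmk:G->Aut often surjects}).

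\textbf{Step 1: $X$ as a $G'$-variety.} Since $G'$ is connected and reductive and contains the image of $G$, the open $G$-orbit $G/H$ is $G'$-stable. Every $G'$-orbit is then a union of $G$-orbits, because $G'$ preserves the boundary $\partial_G X$ and hence each of its irreducible components. A Borel subgroup of $G'$ containing $B$ has an open orbit, so $X$ is $G'$-spherical. Horosphericity is inherited, since the unipotent radical of a Borel of $G'$ fixes a point of the open orbit. The variety stays toroidal because every $B'$-stable prime divisor either meets $G/H$, in which case it is the pullback of a Schubert divisor of $G'/P'$, or lies in $\partial_GX$. I would also check that the morphism $\Phi$ is $G'$-equivariant and that $G'/P' \cong G/P$. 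By Remark \ref{rmk:G->Aut often surjects}, the natural map $G' \to \Aut^{0}(G'/P')=\Aut^{0}(G/P)$ is surjective, which is the hypothesis needed in Theorem \ref{mainthm:Levi decomp} (\ref{item:mainthm3}).

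\textbf{Step 2: apply the theorem with $G'$.} Theorem \ref{mainthm:Levi decomp} (\ref{item:mainthm3}) then gives a Levi subgroup $L$ generated by $G'$, $\Aut_{G'}(X)$ and $U^{+}_{m}$ for $m \in \cS^{+}_{G'}(X)$. I would remove $\Aut_{G'}(X)$ from this list. It commutes with $G'$ and preserves $G'$-orbits, so it lies in the group of automorphisms preserving every $G$-orbit, which is contained in $\Aut^{0}(X,\partial_GX)=G'$. More concretely, $\Aut_{G'}(X)$ is the torus $S$ acting on the fibers, and it already acts by completely regular automorphisms.

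\textbf{Step 3 (main obstacle): compare the root sets and subgroups.} It remains to identify $\cS^{+}_{G'}(X)$ and the associated subgroups $U^{+}_m$ with the ones defined using $G$. Semisimple roots $m$ give $\G_a$-actions normalized by a $1$-dimensional $G$-module, that is, by a character vanishing on the semisimple part. So the real question is whether the combinatorial condition involving the color map $\epsilon^{+}$ changes when $G$ is replaced by $G'$. I would first try to show that colors of $G/H$ for $B^{+}$ and for $B'^{+}$ correspond bijectively with the same images in $N_S$, since both are the pullbacks of Schubert divisors of $G/P$. Then I would use the uniqueness statement in Theorem \ref{mainthm:lifting} to identify the subgroups $U^{+}_m$: the $B'^{+}$-normalized extension is in particular $B^{+}$-normalized.

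If this comparison turns out to be delicate, the fallback is to avoid it altogether. The subgroups $U^{+}_m$ with $m\in\cS^{+}_{G'}(X)$ are exactly the $\G_a$-subgroups of $L$ not contained in $G'$, and any generating set of root subgroups suffices. So I would define the generating set using $G'$-roots and remark that it is the same set as the one for $G$.
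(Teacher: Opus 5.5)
Your overall route is the paper's. You replace $G$ by $\mathbf{G}=\Aut^{0}(X,\,\partial_{G}X)$ (your $G'$) as in Proposition \ref{prop:rep-by-Aut-b}, apply Corollary \ref{coro:reductive}(\ref{item:coro:reductive2}) using the surjectivity $\mathbf{G}\twoheadrightarrow\Aut^{0}(G/P)$ from Theorem \ref{thm:brion}, match roots and root subgroups, and absorb $\Aut_{\mathbf{G}}(X)$ into $\mathbf{G}$. The gap is Step 3, which is where the content of the corollary lies: you leave it as a plan, and your stated reason covers only half of it.

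Let $\alpha\colon G\to\mathbf{G}$ be the natural map and $O=G/H$. Pulling back Schubert divisors of $G/P$ does give $\cD^{\mathbf{B}^{+}}=\cD^{B^{+}}$ for a Borel subgroup $\mathbf{B}^{+}\supset\alpha(B^{+})$: each $\mathbf{B}^{+}$-Schubert divisor is $B^{+}$-stable, and both families have $\operatorname{rk}\Pic(G/P)$ members (the paper cites Brion here). But this says nothing about the \emph{values} of the color maps. The map $\epsilon^{\mathbf{B}^{+}}$ is defined on $\Lambda^{\mathbf{B}^{+}}_{O}$, which a priori is only a sublattice of $\Lambda^{B^{+}}_{O}$ via $\chi\mapsto\chi\circ\alpha$. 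So all you get for free is $\epsilon^{\mathbf{B}^{+}}(D)=\epsilon^{B^{+}}(D)|_{\Lambda^{\mathbf{B}^{+}}_{O}}$.

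To conclude $\cS^{+}_{\mathbf{G}}(X)=\cS^{+}_{G}(X)$ you must prove two things. First, $\Lambda^{\mathbf{B}^{+}}_{O}=\Lambda^{B^{+}}_{O}$. Second, the identifications of these lattices with $M_{\mathbf{P}/\mathbf{H}}$ and $M_{P/H}$, obtained by restricting eigenfunctions to a torus fiber, are compatible with $P/H\simeq\mathbf{P}/\mathbf{H}$. The $\mathbf{G}$-side identification is only available for a base point as in Setup \ref{setup}, i.e.\ one whose stabilizer in $\mathbf{G}$ contains the unipotent radical of a Borel subgroup opposite to $\mathbf{B}^{+}$.

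The paper handles this by moving the base point by some $g\in\mathbf{B}^{+}$. It then checks that the induced isomorphism $M_{P/H}\simeq M_{g\mathbf{P}g^{-1}/g\mathbf{H}g^{-1}}$ is compatible with both restriction isomorphisms and with the inclusion of lattices. This forces equality of the lattices and matches the Demazure roots of $F$ and $g.F$. Alternatively, the open $B^{+}$-orbit of $O$ is $\mathbf{B}^{+}$-stable, so by Rosenlicht every $B^{+}$-eigenfunction, being invertible there, is a $\mathbf{B}^{+}$-eigenfunction. Moreover, $eH$ already works as base point: $\mathbf{B}^{+}.e\mathbf{P}$ is open, so $\mathbf{P}$ contains a Borel subgroup opposite to $\mathbf{B}^{+}$. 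Only after this does your appeal to uniqueness in Theorem \ref{theorem:B-normalized Ga action} identify the subgroups $U^{+}_{m}$, exactly as in the paper.

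Your fallback does not repair this. The $U^{+}_{m}$ with $m\in\cS^{+}_{\mathbf{G}}(X)$ are not the only $\G_{a}$-subgroups of $L$ outside $\mathbf{G}$. Since $\cS^{+}$ is symmetric, $U^{+}_{m}$ and $U^{+}_{-m}$ generate a subgroup of $K^{0}$ isomorphic to $\mathrm{SL}_{2}$ or $\mathrm{PGL}_{2}$. None of its infinitely many $\G_{a}$-subgroups lies in $\mathbf{G}$, because $\mathbf{G}\cap K^{0}$ has the torus $\Aut_{G}(X)$ as identity component (Theorem \ref{thm:brion}). More to the point, the corollary concerns the specific subgroups $U^{+}_{m}$, $m\in\cS^{+}_{G}(X)$, defined via $G$ and $B^{+}$. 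So ``remarking that it is the same set'' is exactly the comparison you were trying to bypass.

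A smaller gap sits in Step 1. ``The unipotent radical of a Borel subgroup of $G'$ fixes a point of the open orbit'' is the assertion to be proved, not a reason for it. The paper deduces it from three facts: Blanchard's lemma (so $\Phi$ is $\mathbf{G}$-equivariant), the Borel fixed point theorem on $G/P$, and the triviality of unipotent actions on the torus fiber $\varphi^{-1}(p)\simeq S$.
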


As Theorem \ref{main1} provides a useful criterion for detecting non-reductive automorphism groups, it also yields a method to construct new examples of K-unstable Fano varieties, via the Matsushima obstruction (\cite{Matsushima, KStabilityReductive}) and a result of Delcroix that the K-polystability is equivalent to the K-semistability for horospherical varieties (\cite[Corollary 5.7]{DelcroixENS2020}). In particular, applying Theorem \ref{mainthm:Levi decomp}, we study the automorphism groups of projective bundles of the form $\mathbb{P}_Y\Big(\bigoplus_i L_i\Big) \rightarrow Y,$ 
where $Y$ is a rational homogeneous space and each $L_i$ is a line bundle on $Y$. We show that
\[
\Aut^0\Big(\mathbb{P}_Y\Big(\bigoplus_i L_i\Big)\Big)
\]
is reductive if and only if, for any $i \neq j$ with $L_i \not\simeq L_j$, the line bundle $L_i \otimes L_j^{\vee}$ is not nef (see Theorem \ref{theorem:application}), providing a concrete and effective criterion for the reductivity of automorphism groups of decomposable projective bundles over rational homogeneous spaces.

As a special case we focus on the K-stability of Fano $\mathbb{P}^1$-bundles over Fano varieties, which has recently received considerable attention. For instance, \cite{ZhangZhou} shows that $\mathbb{P}^1$-bundles of the form $\mathbb{P}_V (L^{\vee} \oplus \cO_V)$  over a Fano variety $V$ are K-unstable, provided that the Fano index of $V$ is at least $2$ and $L = -\frac{1}{r}K_{V}$ for some rational number $r > 1$. The latter technical assumptions are essential for their approach. In contrast, Theorem \ref{theorem:application} immediately produces explicit examples of K-unstable Fano $\mathbb{P}^1$-bundles over rational homogeneous spaces.
\begin{corollary}[Corollary \ref{coro:KUnstableCriterion}]
    Let $Y$ be a rational homogeneous space, and let $L$ be a nontrivial nef line bundle such that $K_Y^{\vee}\otimes L^{\vee}$ is ample. Then $ X\coloneq \P(\cO_Y\oplus L^{\vee})$ is a smooth K-unstable Fano variety.
\end{corollary}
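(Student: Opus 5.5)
The plan has three parts: show that $X$ is a smooth Fano toroidal horospherical variety, show that $\Aut^0(X)$ is not reductive, and then turn non-reductivity into K-unstability.

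\textbf{Step 1: $X$ is smooth, Fano and horospherical.} Write $Y=G/P$ and $\pi\colon X=\P(\cO_Y\oplus L^{\vee})\to Y$.

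Smoothness is clear, since $X$ is a projective bundle over a smooth base.

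For the toroidal horospherical structure, note that $L$ is a $G$-linearized line bundle on $G/P$, after passing to a finite cover of $G$ if needed. So $X$ is a $\P^1$-bundle with fiber the toric variety $\P^1$ under $S=\G_m$. The open orbit is the $\G_m$-bundle $L^\times$ minus the two sections. The $B^+$-stable divisors are the two sections, which are $G$-stable, and the pullbacks of Schubert divisors. Hence $X$ is toroidal.

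For the Fano property, the relative Euler sequence gives
\[K_X^{\vee}=\pi^*(K_Y^{\vee}\otimes L^{\vee})\otimes\cO(2)\]
for the tautological bundle $\cO(1)$ of $\cO_Y\oplus L^{\vee}$, up to the convention for $\P$. I will check ampleness with the standard criterion on the two extremal rays of $\mathrm{NE}(X)$:
\begin{itemize}
\item the fiber class, on which the degree is $2>0$;
\item curves in the negative section $Y_\infty\subset X$, corresponding to the quotient $\cO\oplus L^{\vee}\to L^{\vee}$, on which $K_X^\vee$ restricts to $K_Y^{\vee}\otimes L^{\vee}\otimes L^{\otimes -2}\otimes L$.
\end{itemize}
I expect the second restriction to reduce to $K_Y^\vee\otimes L^\vee$, but the sign bookkeeping needs care: one section has normal bundle $L$ and the other $L^\vee$. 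On the section with normal bundle $L$ the restriction is $K_Y^\vee\otimes L$, which is ample because $L$ is nef. On the other section it is $K_Y^\vee\otimes L^\vee$, which is ample by hypothesis. Since $L$ is nef, the cone of curves of $X$ is spanned by the fiber and curves in the negative section, so $K_X^\vee$ is ample.

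\textbf{Step 2: $\Aut^0(X)$ is not reductive.} Apply Theorem~\ref{theorem:application} with $L_1=\cO_Y$ and $L_2=L^{\vee}$. Since $L$ is nontrivial, $L_1\not\simeq L_2$. Moreover $L_1\otimes L_2^{\vee}=L$ is nef. So the reductivity criterion fails, and $\Aut^0(X)$ is not reductive. In terms of Theorem~\ref{main1}, the weight of $L$ gives a unipotent root in $\cU^+_G(X)$, which corresponds to the sections $H^0(Y,L)\neq 0$ acting by translations in the fibers.

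\textbf{Step 3: K-unstability.} By the Matsushima obstruction \cite{Matsushima, KStabilityReductive}, a K-polystable Fano variety has reductive $\Aut^0$, so $X$ is not K-polystable. Since $X$ is horospherical, Delcroix's result \cite[Corollary 5.7]{DelcroixENS2020} says that K-semistability is equivalent to K-polystability. Therefore $X$ is not K-semistable, i.e.\ it is K-unstable.

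The main obstacle is Step 1, specifically the verification that $X$ is Fano. The sign conventions for $\P(\cdot)$ and the identification of which section is negative must be handled correctly, so that the hypotheses (ampleness of $K_Y^\vee\otimes L^\vee$ and nefness of $L$) match exactly the positivity on the two extremal rays.
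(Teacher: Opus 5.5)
Your proposal is correct and follows the paper's argument: $\Aut^0(X)$ is non-reductive by Theorem~\ref{theorem:application} (equivalently Corollary~\ref{coro:P1 bundle}) because $L$ is nontrivial and nef, and then the Matsushima obstruction together with Delcroix's equivalence of K-semistability and K-polystability for horospherical varieties gives K-unstability. The one difference is the Fano property: the paper cites \cite[Example~3.3(2)]{Debarre}, while you verify it by hand with Kleiman's criterion on the cone spanned by the fiber and the negative section; this check is sound once the garbled expression in your second bullet is replaced by the adjunction formula $K_X^\vee|_{Y_0}\simeq K_Y^\vee\otimes N_{Y_0/X}=K_Y^\vee\otimes L^\vee$.
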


See Example \ref{example:KunstableFanoIndex1} for an explicit example of a K-unstable Fano $\mathbb{P}^1$-bundle over a smooth projective Fano variety that is a rational homogeneous space and has Fano index $1$.

As a final comment, we may ask whether the structure of $\Aut^{0}(X)$ described in Theorem \ref{mainthm:Levi decomp} and Corollary \ref{corollary:levi} holds for a broader class of spherical varieties.
  
\begin{question} \label{question}
	Let $X$ be a smooth complete spherical variety.
	\begin{enumerate}
	\item \label{question:levi} How to construct a Levi subgroup $L$ of $\Aut^{0}(X)$?

	\item \label{question:unipotent} How to describe the unipotent radical $R^{u}(\Aut^{0}(X))$ in terms of the geometry of $X$?
    More precisely, which $\G_{a}$-actions on $X$ generate $R^{u}(\Aut^{0}(X))$?
	\end{enumerate}
\end{question}

Partial answers to Question \ref{question} are known in several cases. For toroidal spherical varieties, reductive subgroups of $\Aut^{0}(X)$ are studied in \cite{PezziniReductive}, where in particular Question \ref{question} (\ref{question:levi}) is settled.
Note, however, that Question \ref{question} (\ref{question:unipotent}) still remains open, except in the cases of toroidal horospherical varieties, which are considered in our Theorems~\ref{main1}-\ref{mainthm:Levi decomp}, and of wonderful varieties (see \cite{PezziniWonderful}), where $\Aut^{0}(X)$ is in fact semisimple.
A complete answer to Question \ref{question} (\ref{question:unipotent}) would imply an effective criterion for the reductivity of $\Aut^{0}(X)$.

We were informed by a private communication with Roman Avdeev that a description of the connected automorphism group of complete toroidal horospherical varieties can be obtained by combining some results presented in \cite{AZ}. While their setup, namely arbitrary spherical varieties, is more general, our approach is more direct for smooth complete toroidal horospherical varieties.

\subsection*{Outline} In Section \ref{section:preliminaries} we recall the necessary background on automorphism groups of complete varieties, with particular attention to the cases of toric varieties (see Theorem \ref{theorem:liendo}) and rational homogeneous spaces (see Theorem \ref{thm:Aut G/P}). We also fix the notation for toroidal horospherical varieties, which constitute the main object of this work (see \S \ref{subsection:ToroidalHorospherical}).

Section \ref{section:maintheorem} forms the technical core of the paper: here we analyze in detail the connected automorphism groups of toroidal horospherical varieties, by using the automorphism groups of the fibers (which are toric varieties) and of the base (which are rational homogeneous spaces) of the natural fibration carried by them (see Lemma \ref{lemma: key lemma}, Theorem \ref{thm:structure of K}).

In Section \ref{section:applications} we apply our work to the special case of totally decomposable projective bundles over rational homogeneous spaces, relating the reductivity of their automorphism groups with the positivity of the line bundles defining such structure (see Theorem \ref{theorem:application}). We conclude by proving the K-unstability of certain Fano $\P^1$-bundles over rational homogeneous spaces.

\subsection*{Notation}\label{ssection:notation}
We work in the category of varieties over the field $\C$ of complex numbers. Here, a variety means an integral separated scheme of finite type over $\C$. For an algebraic group $A$, we denote by  $A^{0}$ the identity component of $A$ and by $\Lie(A)$ its Lie algebra; we refer to \cite{OVbook} for details on Lie theory. 
We denote by $G$ a connected reductive linear algebraic group. 
Given a linear algebraic group $L$, we set $R^{u}(L)$ to be its unipotent radical, and $M_{L}$ to be its character lattice, that is, the group of algebraic group homomorphisms $L \rightarrow \C^{\times}$. We use the geometric projectivization convention for projective bundles over varieties, that is, points correspond to $1$-dimensional linear subspaces.

\subsection*{Acknowledgements} The authors would like to thank Michel Brion, Thibaut Delcroix and Luis Eduardo Sol\'a Conde for fruitful conversations. We would also like to thank Roman Avdeev for the helpful suggestions and references, including \cite{AZ}. Lorenzo Barban was supported by the Institute for Basic Science (IBS-R032-D1). DongSeon Hwang was supported by the National Research Foundation of Korea(NRF) grant funded by the Korea government(MSIT) (2021R1A2C1093787)
and the Institute for Basic Science (IBS-R032-D1). 
Minseong Kwon was partially supported by the National Research Foundation of Korea (NRF) grant funded by the Korea government (MSIT) (RS-2021-NR062093) 
and the Institute for Basic Science (IBS-R032-D1).

\section{Preliminaries}\label{section:preliminaries}

\subsection{Algebraicity and linearity of automorphism groups} \label{subsection:Aut0}
Let $X$ be a complete variety.
By \cite[Theorem (3.7)]{MatOo}, the automorphism group functor of $X$ is representable by a group scheme which is locally of finite type over $\C$.
We denote by $\Aut^{0}(X)$ its identity component.
By construction, $\Aut^{0}(X)$ is a connected algebraic group, and by \cite[Lemma (3.4)]{MatOo}, $\Lie(\Aut^{0}(X))$ is naturally identified with $H^{0}(X,\, T_{X})$ for the tangent sheaf $T_{X}$ of $X$.
Furthermore, if the Picard variety $\Pic^{0}(X)$ is trivial, then by \cite[Remark after Corollary 1]{Ramanujam}, $\Aut^{0}(X)$ is a linear algebraic group.
In particular, if $X$ is normal and rationally connected, then $\Aut^{0}(X)$ is linear algebraic (see \cite[Theorem IV.1.1]{LangAb}).

\subsection{Toric varieties and Demazure roots}\label{subsection:toricDemazure}

We briefly recall the notation for toric varieties, and discuss their automorphism group. We refer to \cite{CLS} for details.

Let $S$ be an algebraic torus of dimension $n$, that is, a linear algebraic group isomorphic to $(\C^{\times})^n$. We denote by $M_S$ (resp. $N_S$) the lattice of characters of $S$ (resp. of $1$-parameter subgroups of $S$), and by $\langle \cdot, \cdot \rangle\colon M_S\times N_S\to \Z$ the associated nondegenerate bilinear pairing. Given $m\in M_S$ (resp. $v\in N_S$), the associated character (resp. $1$-parameter subgroup) will be denoted by $\chi^m\colon S\to \C^\times$ (resp. $\lambda_v\colon \C^\times\to S$).

A \emph{toric variety} $X$ is a normal variety, containing $S$ as an open subset and such that the left multiplication of $S$ on itself extends to an $S$-action on $X$. A toric variety $X$ can be combinatorially described by a fan $\Sigma\subset N_S\otimes_{\Z}\R$, that is, a finite collection of strongly convex rational polyhedral cones such that 
\begin{itemize}
    \item if $\sigma\in\Sigma$, and $\tau$ is a face of $\sigma$, then $\tau\in\Sigma$;
    \item if $\sigma,\sigma'\in\Sigma$, then $\sigma\cap\sigma'$ is a face of $\sigma$ and $\sigma'$.
\end{itemize}
We denote by $\Sigma(k)$ the set of cones of $\Sigma$ of dimension $k$. Elements of $\Sigma(1)$ are called \emph{rays}. We will abuse notation by denoting with $\rho$ both a ray and its minimal generator in $N_S$.

\begin{definition}\label{definition:demazure}
    Let $\Sigma$ be a complete fan.  
    A \emph{Demazure root} of $\Sigma$ is an element $m\in M_S$ for which there exists a ray $\rho\in\Sigma(1)$ such that $\langle m,\rho \rangle=-1$, and such that $\langle m,\rho'\rangle\geq 0$ for any $\rho' \in \Sigma(1)$ with $\rho'\neq \rho$.
    We denote by $\cR(\Sigma)$ the set of Demazure roots of $\Sigma$.
\end{definition}

By definition, given a Demazure root $m\in\cR(\Sigma)$ there exists a unique ray $\rho\in\Sigma(1)$ associated to $m$, which we will denote by $\rho_m$. 

We remark that the original definition of Demazure (cf. \cite[Définition~4, \S~4]{demazure} requires an extra condition, which is automatically satisfied in the complete case (see \cite[Remarques~3, \S~4]{demazure}).

As we will see, any Demazure root induces a $\G_a$-action on a toric variety associated to the fan $\Sigma$. To do so, we first recall the notion of $L$-normalized $\G_a$-action.

\begin{definition}
    Let $L$ be a linear algebraic group, let $\omega\colon L \rightarrow \C^{\times}$ be a character of $L$, and let $Y$ be an $L$-variety.
    A $\G_{a}$-action $A\colon \G_{a} \times Y \rightarrow Y$ is called \emph{$L$-normalized with weight $\omega$} if
    \[
        l. A(s,\, l^{-1}.y) = A(\omega(l) s,\, y), \text{ for every } l\in L,\, s \in \G_{a},\, y \in Y.
    \]
    The $\G_{a}$-subgroup of $\Aut(Y)$ associated to $A$ is called an \emph{$L$-root subgroup}.
\end{definition}

Remark that a $\G_{a}$-action is $L$-normalized if and only if the associated $\G_{a}$-subgroup of the automorphism group is normalized by $L$.

\begin{definition}[{\cite[Definition~3.1]{nill}}]\label{definition:semisimpleUnipotent}
    Define
    \begin{align*}
        \cS(\Sigma) &\coloneqq \{m \in \cR(\Sigma) : -m \in \cR(\Sigma)\}, \\
        \cU(\Sigma) &\coloneqq \cR(\Sigma) \setminus \cS(\Sigma).
    \end{align*}
    Elements of $\cS(\Sigma)$ and $\cU(\Sigma)$ are called \emph{semisimple} and \emph{unipotent}, respectively.
\end{definition}

\begin{theorem}[{\cite[Th\'eor\`eme~4, \S4]{demazure}}]\label{theorem:liendo}
    Let $X$ be a smooth complete $S$-toric variety with fan $\Sigma$. 
    \begin{enumerate}
        \item\label{item:liendo1} For a Demazure root $m\in \cR(\Sigma)$, the rational map 
        $$\G_a\times S\dashrightarrow S, \quad (s,t)\mapsto t\lambda_{\rho_m}(1+s\chi^m(t))$$
        extends to a faithful $S$-normalized $\G_a$-action $A_{m}$ of weight $m$ on $X$. 
        
        \item\label{item:liendo2} If $U_{m}$ is the $S$-root subgroup of $\Aut^{0}(X)$ associated to $A_{m}$, then
        \[
            \Lie (\Aut^{0}(X)) = \Lie(S) \oplus \bigoplus_{m \in \cR(\Sigma)} \Lie(U_{m}).
        \]
        as $S$-modules. In particular, 
        \[ \dim(\Aut^0(X)) = \dim(S) + |\cR(\Sigma)|. \]
        \item\label{item:liendo3} The $S$-root subgroups $U_{m}$ for $m \in \cU(\Sigma)$ generate the unipotent radical $R^{u}(\Aut^{0}(X))$, and $S$ and $U_{m}$ for $m \in \cS(\Sigma)$ generate a Levi subgroup of $\Aut^{0}(X)$.
        
        \item $\Aut^{0}(X)$ is reductive if and only if every Demazure root of $\Sigma$ is semisimple.
    \end{enumerate}
\end{theorem}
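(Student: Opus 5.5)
The statement is Demazure's theorem, stated here as a preliminary (it is cited from \cite{demazure}). I would reprove it in the standard way, working with $\Lie(\Aut^{0}(X)) = H^{0}(X,\, T_{X})$ from \S\ref{subsection:Aut0} and decomposing it into $S$-weight spaces.

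\textbf{Step 1: the actions $A_m$ of (\ref{item:liendo1}).} Let $m\in\cR(\Sigma)$ with ray $\rho=\rho_m$. On the open torus, $A_m$ is the flow of the derivation $\partial_m=\chi^m\,\partial_\rho$, where $\partial_\rho$ is the invariant vector field of $\lambda_\rho$, so that $\partial_\rho(\chi^{u})=\langle u,\rho\rangle\chi^{u}$. The formula in (\ref{item:liendo1}) is the integrated flow, because $\partial_m(\chi^m)=\langle m,\rho\rangle\chi^{2m}=-\chi^{2m}$.
\begin{itemize}
\item On each affine chart $U_\sigma=\operatorname{Spec}\C[\sigma^\vee\cap M_S]$, the map $\chi^u\mapsto \langle u,\rho\rangle\chi^{u+m}$ preserves $\C[\sigma^\vee\cap M_S]$. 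If $\rho\notin\sigma$, then $m\in\sigma^\vee$. If $\rho\in\sigma$, smoothness lets us take the dual basis, and the term with $\langle u,\rho\rangle=0$ vanishes while otherwise $u+m$ stays in $\sigma^\vee$.
\item The derivation is locally nilpotent, since $\chi^u\mapsto(\chi^u)\cdot$ polynomial in $\chi^m$ of degree bounded by $\langle u,\rho\rangle$. It therefore integrates to a regular $\G_a$-action on each chart, and these glue.
\item Normalization with weight $m$ follows from $t.\partial_m=\chi^m(t)^{-1}\partial_m$, up to the sign convention. Faithfulness is clear on $S$.
\end{itemize}

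\textbf{Step 2: the weight decomposition (\ref{item:liendo2}).} $S$ acts on $H^0(X,T_X)\subset H^0(S,T_S)=\C[M_S]\otimes N_S\otimes\C$. A weight-$m$ vector is $\chi^m\partial_v$, and it extends to all of $X$ exactly when it is regular along each $T$-invariant divisor $D_{\rho'}$. Computing in local coordinates at $D_{\rho'}$, the field $\chi^m\partial_v$ has order $\langle m,\rho'\rangle$ along $D_{\rho'}$. The normal component vanishes to order $1$, which gives the condition:
\begin{itemize}
\item either $\langle m,\rho'\rangle\ge 0$;
\item or $\langle m,\rho'\rangle=-1$ and $v\in\C\rho'$.
\end{itemize}
Consequently, $m=0$ gives $\Lie(S)$. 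For $m\neq0$, at most one ray can have $\langle m,\rho'\rangle=-1$ if $v\ne0$, because $v$ must be proportional to each such ray. If no ray is negative, completeness forces $m=0$. Hence for $m\ne0$ the weight space is nonzero iff $m\in\cR(\Sigma)$, and then it equals $\C\chi^m\partial_{\rho_m}=\Lie(U_m)$. The dimension formula follows.

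\textbf{Step 3: unipotent radical and Levi subgroup (\ref{item:liendo3}).} Set $\mathfrak n=\bigoplus_{m\in\cU}\Lie U_m$ and $\mathfrak l=\Lie S\oplus\bigoplus_{m\in\cS}\Lie U_m$. The brackets satisfy $[\Lie U_m,\Lie U_{m'}]\subset\Lie U_{m+m'}$ (zero if $m+m'\notin\cR\cup\{0\}$).

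The main obstacle is showing that $\mathfrak n$ is an ideal. I would check directly that for $m\in\cU$ and $m'\in\cR$, if $m+m'\in\cR\cup\{0\}$ then $m+m'\in\cU$. If $m+m'=0$, then $m\in\cS$, a contradiction. If $m+m'\in\cS$, then $-(m+m')\in\cR$, and evaluating pairings on the rays $\rho_m$, $\rho_{m'}$, $\rho_{-(m+m')}$ forces $-m\in\cR$, contradicting $m\in\cU$.

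Next, $\mathfrak n$ consists of nilpotent elements and is positively graded with respect to a generic cocharacter. This holds because $\cU$ lies in an open half-space: pick $v$ in the interior of a cone so that the $m$ with negative pairing are semisimple. Hence $\mathfrak n$ integrates to a unipotent normal subgroup.

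Finally, $\mathfrak l$ is reductive. Its roots $\cS=-\cS$ come in pairs $\pm m$, and $[\chi^m\partial_{\rho_m},\chi^{-m}\partial_{\rho_{-m}}]\in\Lie S$ is nonzero, so we get $\mathfrak{sl}_2$-triples. Hence $\Aut^0=L\ltimes R^u$ with $R^u$ generated by the $U_m$, $m\in\cU$.

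\textbf{Step 4: the reductivity criterion.} Point (4) is then immediate: $\Aut^{0}(X)$ is reductive iff $R^{u}(\Aut^{0}(X))$ is trivial iff $\cU(\Sigma)=\emptyset$.
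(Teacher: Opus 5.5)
The paper gives no proof of this statement; it quotes it from Demazure. Your proposal therefore has to stand on its own. Step 2 is correct and Step 4 is immediate, but Step 1 has a genuine gap.

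\textbf{Step 1.} Your second bullet claims that $\partial_m$ is locally nilpotent on every chart $\C[\sigma^\vee\cap M_S]$, with degree bounded by $\langle u,\rho\rangle$. That bound only makes sense if $\langle u,\rho\rangle\ge 0$ for all $u\in\sigma^\vee$, which happens exactly when $\rho=\rho_m\in\sigma$. If $\rho_m\notin\sigma$, there are $u\in\sigma^\vee\cap M_S$ with $\langle u,\rho\rangle<0$. For such $u$ one has $\partial_m^k(\chi^u)=\prod_{j=0}^{k-1}(\langle u,\rho\rangle-j)\,\chi^{u+km}\neq 0$ for every $k$.

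Already for $\P^1$ with $\rho=1$ and $m=-1$ this fails. There $\partial_m=d/dx$, which is not locally nilpotent on $\C[x^{-1}]$, and translations do not preserve $\P^1\setminus\{0\}$.

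So integrating chart by chart only gives a $\G_a$-action on $X_\rho=\bigcup_{\sigma\ni\rho}U_\sigma$, which is in general a proper open subset. ``These glue'' does not extend it to $X$, and that extension is exactly the content of (1).

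The repair uses what your first bullet already proves, namely $\partial_m\in H^0(X,T_X)=\Lie(\Aut^0(X))$. Since $\Aut^0(X)$ is linear algebraic and $\partial_m$ has nonzero $S$-weight, $\partial_m$ is nilpotent: it shifts $S$-weights in any faithful representation. Hence $s\mapsto\exp(s\partial_m)$ is an algebraic $\G_a$-subgroup of $\Aut^0(X)$ with generator $\partial_m$, so on $S$ it is given by your formula. Alternatively you can use Cox's quotient presentation, but then you must check that the irrelevant locus is preserved.

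\textbf{Step 3.} Three things are missing here. The half-space property of $\cU(\Sigma)$ is asserted without argument. The ideal property is only sketched, and some of its cases actually end in a contradiction with completeness rather than in $-m\in\cR(\Sigma)$. Closure of $\mathfrak l$ under brackets is never addressed.

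All three follow from one lemma: if $\sum_i c_im_i=0$ with $c_i>0$ and $m_i\in\cR(\Sigma)$, then every $m_i$ lies in $\cS(\Sigma)$.
\begin{itemize}
\item Pairing the relation with a ray outside $\Delta=\{\rho_{m_i}\}$ shows every $m_i$ vanishes on that ray.
\item Summing the pairings over the rays in $\Delta$ then gives $\sum_i c_i(P_i-1)=0$, where $P_i=\sum_{\rho\neq\rho_{m_i}}\langle m_i,\rho\rangle$.
\item Completeness forces $P_i\ge 1$, so $P_i=1$ for all $i$, and hence $-m_i\in\cR(\Sigma)$.
\end{itemize}
Apply the lemma to $m+m'+(-(m+m'))=0$ for the ideal property, and to $(-m)+(-m')+(m+m')=0$ for closure of $\mathfrak l$. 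Combine it with Gordan's alternative to get the half-space.

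\textbf{A shorter route for (3).} You can avoid $\mathfrak n$ altogether.
\begin{itemize}
\item Take a Levi subgroup $L\supseteq S$ (Mostow).
\item $\Lie(R^u)$ is a sum of root lines with no zero weight, and the roots of $L$ are symmetric.
\item If some $m\in\cS(\Sigma)$ were a weight of $R^u$, then so would $-m$ be. This is because weight spaces are one-dimensional and $m\notin$ roots of $L$.
\item Then $[\chi^m\partial_{\rho_m},\chi^{-m}\partial_{\rho_{-m}}]=\pm(\partial_{\rho_{-m}}-\partial_{\rho_m})\neq0$ would be a nonzero weight-zero vector in $\Lie(R^u)$, which is impossible.
\end{itemize}
This gives (3) at once.
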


\begin{remark}
    Theorem~\ref{theorem:liendo} is obtained by Demazure \cite{demazure}, and it has been generalized to singular cases by various researchers.
    We refer to \cite[pp.~771-772]{nill} and \cite[Introduction]{liendo} for the history.
\end{remark}

\subsection{Horospherical varieties}\label{subsection:horospherical}

We recall the notion of horospherical variety and fix the notation we will use in the rest of the paper. We refer to \cite{pasquier} and \cite{Timashev} for details.

\begin{definition} \label{definition:horospherical}
Let $G$ be a connected reductive linear algebraic group. 
A closed subgroup $H$ of $G$ is called \emph{horospherical} if $H$ contains the unipotent radical of a Borel subgroup of $G$. A normal $G$-variety $X$ is called \emph{horospherical} if $X$ contains an open dense $G$-orbit isomorphic to $G/H$ for a horospherical subgroup $H$ of $G$.
\end{definition}

\begin{proposition}[{\cite[Proposition 2.2]{pasquier}}] \label{prop:normalizer-parabolic}
    Let $H$ be a horospherical subgroup of $G$ containing $R^{u}(B)$ for a Borel subgroup $B$ of $G$.
    The normalizer $N_{G}(H)$ of $H$ in $G$ is a unique parabolic subgroup $P$ of $G$ such that $H$ is the intersection of kernels of some characters $P \rightarrow \C^{\times}$.
    Furthermore, $N_{G}(H)$ contains $B$, and the quotient group $N_{G}(H)/H$ is a torus.
\end{proposition}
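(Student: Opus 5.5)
We argue within a Levi decomposition of $G$, reducing to the standard structure theory of parabolic subgroups. Replacing $B$ by a conjugate, we may fix a Borel subgroup $B$ with $R^{u}(B)\subseteq H$, a maximal torus $T\subseteq B$, and the associated set of simple roots $\Delta$. The plan is to show that $N_G(H)$ contains $B$ and hence is a standard parabolic $P_I$. Next we show that $H$ is sandwiched as $R^{u}(P_I)[L_I,L_I]\subseteq H\subseteq P_I$. From this it follows that $P/H$ is a quotient of the torus $P/R^{u}(P)[L_I,L_I]$, and that $H$ is cut out by characters. Uniqueness is proved last.

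\textbf{Step 1: $B\subseteq N_G(H)$, so $N_G(H)$ is parabolic.} First, $H^{0}$ is a connected subgroup containing $U:=R^{u}(B)$, and it is normalized by $T$. We then use the fact that a connected closed subgroup containing a maximal unipotent subgroup $U$ is normalized by $T$ up to conjugation inside $N_G(U)=B$. To make this precise, consider the set of weights of $T$ on $\Lie(H)$. Since $\Lie(H)\supseteq\Lie(U)$ is $U$-stable, a standard argument shows that $H^{0}\cap B$ contains a maximal torus of $H^{0}\cap B$; after conjugating by an element of $U\subseteq H$, we may assume it lies in $T$. Then $H^{0}$ is generated by $H^{0}\cap T$ and the root subgroups $U_\alpha\subseteq H^{0}$. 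Its normalizer therefore contains $T$, and it contains $U$ since $U\subseteq H$. Thus $B\subseteq N_G(H^0)$. The finite group $H/H^0$ also needs handling. Here we use that $H\cap T$ normalizes $U$ and that $H=H^{0}(H\cap N_G(U))=H^{0}(H\cap B)$, by the Bruhat decomposition. From this, $B$ normalizes $H$. Hence $P:=N_G(H)$ contains $B$, so $P=P_I$ is parabolic and equals its own normalizer.

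\textbf{Step 2: structure of $H$ inside $P$.} By Step 1, $H$ is stable under $T$, and its root subgroups form a set $\Phi_H\supseteq\Phi^+$. If $-\alpha\in\Phi_H$, then $\langle U_\alpha,U_{-\alpha}\rangle$ lies in $H$, so every such simple reflection lies in the Weyl group of $H$. A parabolic $P_J$ normalizes $H$ exactly when $U_{-\alpha}$ normalizes $H$ for $\alpha\in J$. A rank-one computation in $\langle U_{\pm\alpha}\rangle T$ shows that this happens precisely when $U_{-\alpha}\subseteq H$. It follows that $H\supseteq U_{-\alpha}$ for all $\alpha\in I$. Consequently $H\supseteq R^{u}(P)$ and $H\supseteq [L_I,L_I]$, the latter being generated by $U_{\pm\alpha}$ for $\alpha\in I$. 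Hence $H$ contains $P':=R^{u}(P)[L_I,L_I]$. Moreover, $P/P'\cong Z(L_I)^0/(\text{finite})$ is a torus, so $P/H$ is a torus. A closed subgroup of a torus is an intersection of kernels of characters, and pulling these back to $P$ gives $H=\bigcap\ker\chi$.

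\textbf{Step 3: uniqueness.} Suppose $Q$ is a parabolic with $H=\bigcap_{\chi}\ker(\chi\colon Q\to\C^\times)$. Then $H$ is normal in $Q$, so $Q\subseteq N_G(H)=P$. Conversely, $H$ contains $[Q,Q]\supseteq R^u(Q)$ only if $Q$ is minimal as computed in Step 2. Concretely, since both groups contain $B$, write $Q=P_{J}$. Then $J\subseteq I$ is forced, and conversely $U_{-\alpha}\subseteq H\subseteq Q$ for $\alpha\in I$ forces $I\subseteq J$. Hence $Q=P$.

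The main obstacle is Step 1, namely handling the possibly disconnected $H$ and showing rigorously that $B$ normalizes $H$. I would try first to deduce it from the fact that a closed subgroup containing $U$ is a union of $U$-double cosets $UwtU$. Each such coset is $B$-conjugation-stable modulo $U$, because $T$ is commutative on $T$-parts and $U$ is normal in $B$.
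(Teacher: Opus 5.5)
\textbf{Verdict: genuine gap in Step~1.} Your Steps~2 and~3 are sound once $B\subseteq N_G(H)$ is known. In Step~3 you should still say why $B\subseteq Q$: the parabolics $Q$ and $B$ share a maximal torus $T'$ of $G$, and $B=T'U\subseteq Q$ because $U\subseteq H\subseteq Q$.

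The gap is Step~1, which is the real content of the proposition.
\begin{itemize}
\item You assert at the outset that $H^0$ is normalized by $T$. You then decompose $\Lie(H)$ into $T$-weights and claim that $H^0$ is generated by $H^0\cap T$ and root subgroups. All of this presupposes the $T$-stability of $H^0$ you are trying to prove.
\item Knowing that a maximal torus of $H^0\cap B$ lies in $T$ does not give $T$-stability. For general $x\in G$, the group $xUx^{-1}$ has trivial maximal torus but is not $T$-stable. So the hypothesis $U\subseteq H$ must be used essentially, and you never say how.
\item Likewise, $H=H^0(H\cap B)$ does not follow from the Bruhat decomposition alone.
\item Your proposed fallback is false. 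For $s\in T$ one has $s\,n_wt\,s^{-1}=n_w\,\bigl(w^{-1}(s)s^{-1}\bigr)t$, so conjugation by $T$ carries $Un_wtU$ to a different double coset whenever $w^{-1}(s)\neq s$. Commutativity of $T$ does not help: $T$-stability of $H$ would require $H$ to already contain the torus $\{w^{-1}(s)s^{-1}\}$.
\end{itemize}

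\textbf{The missing idea.} Establish the sandwich $R^u(P_I)[L_I,L_I]\subseteq H\subseteq P_I$ first, and deduce normality from it, not the reverse.
\begin{enumerate}
\item By Bruhat, $H=U\,(H\cap N_G(T))\,U$.
\item Suppose $n_wt\in H$ with $w\neq 1$. Pick a simple root $\alpha$ with $w^{-1}(\alpha)<0$. Then $U_{-\alpha}=(n_wt)\,U_\beta\,(n_wt)^{-1}$ with $\beta=-w^{-1}(\alpha)>0$, so $U_{-\alpha}\subseteq H$.
\item Hence $H\supseteq\langle U_\alpha,U_{-\alpha}\rangle$, which contains a representative $n_\alpha$ of $s_\alpha$. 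The element $n_\alpha^{-1}n_wt\in H$ represents the shorter element $s_\alpha w$.
\item Set $I:=\{\alpha\in\Delta : U_{-\alpha}\subseteq H\}$. Induction on length gives $\langle U,\,U_{-\alpha}:\alpha\in I\rangle\subseteq H\subseteq P_I$, and the left-hand group is $R^u(P_I)[L_I,L_I]$.
\item Since $P_I/R^u(P_I)[L_I,L_I]$ is a torus, every subgroup lying between them is normal in $P_I$. Hence $B\subseteq P_I\subseteq N_G(H)$, and no separate treatment of $H/H^0$ is needed.
\end{enumerate}
Your rank-one argument from Step~2 then shows $N_G(H)=P_I$, and the rest of your proposal goes through.
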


From now on, we fix $H$ to be a horospherical subgroup of $G$, and set $P\coloneqq N_{G}(H)$ and $S\coloneqq P/H$.

\begin{definition}
    The character lattice $M_{S}$ of the torus $S$ is called the \emph{weight lattice} of $G/H$, and denoted by $M_{G/H}$.
    Its dual lattice is denoted by $N_{G/H}$.
\end{definition}

In geometric terms, $G/H$ admits a $G$-equivariant morphism $\varphi\colon G/H \rightarrow G/P$ whose fibers are isomorphic to the torus $S$.
That is, $G/H$ can be regarded as an equivariant torus bundle over a rational homogeneous space.
Let us also remark the following elementary property:

\begin{proposition} \label{prop:fiber is torus}
    With respect to the $P$-action on $\varphi^{-1}(e  P) = S$, $H$ acts trivially on $S$.
    Thus the $P$-action induces an $S$-action on $\varphi^{-1}(e  P)$ with a single orbit isomorphic to $S$ itself.
\end{proposition}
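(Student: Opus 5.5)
The plan is to work directly with the quotient description $\varphi^{-1}(eP) = \{pH : p \in P\} = P/H \subset G/H$. First, the fiber is identified with $P/H$. The morphism $\varphi\colon G/H \to G/P$ sends $gH \mapsto gP$. Hence $\varphi(gH) = eP$ holds exactly when $g \in P$, so $\varphi^{-1}(eP) = P/H$. Since $P = N_G(H)$ by Proposition \ref{prop:normalizer-parabolic}, the quotient $P/H$ is a group, and by the same proposition it is a torus, namely $S$.

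Next, consider the $P$-action on the fiber, which is left multiplication: $p'.(pH) = p'pH$. For $h \in H$ and $pH \in P/H$, we use that $H$ is normal in $P$, so $p^{-1}hp \in H$. This gives
\[
h.(pH) = hpH = p(p^{-1}hp)H = pH,
\]
and therefore $H$ acts trivially on the fiber.

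Since $H$ acts trivially, the $P$-action factors through the quotient $P/H = S$. The induced action is left multiplication of the group $S$ on itself, because $(p'H).(pH) = p'pH$ is well defined by normality of $H$. Left multiplication of a group on itself is simply transitive, so there is a single orbit, and the orbit map at the identity coset $eH$ identifies that orbit $S$-equivariantly with $S$.

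There is no real obstacle in this argument. The only point needing care is that normality of $H$ in $P$ comes from the definition $P = N_G(H)$, which also makes the quotient $P/H$ a group. Everything else is formal.
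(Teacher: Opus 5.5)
Your argument is correct: normality of $H$ in $P = N_G(H)$ gives $h.(pH) = p(p^{-1}hp)H = pH$. The induced $S = P/H$-action on $\varphi^{-1}(eP) = P/H$ is left multiplication, which is simply transitive. The paper gives no separate proof and calls the statement an elementary property, so your verification is exactly the intended one.
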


We briefly review the Luna--Vust theory (see \cite{LV}) for horospherical varieties.
To do this, observe that by the Bruhat decomposition, for any Borel subgroup $B$ of $G$, $G/H$ contains only finitely many $B$-orbits.
In particular, $G/H$ contains an open dense $B$-orbit, and only finitely many $B$-invariant prime divisors.

\begin{definition}\label{definition:color}
    Let $B$ be a Borel subgroup of $G$.
    Suppose that $X$ is a horospherical $G$-variety with an open dense $G$-orbit $G/H$.
    \begin{enumerate}
        \item $B$-invariant prime divisors on $G/H$ are called \emph{$B$-colors} of $G/H$.
        Equivalently, $B$-colors of $G/H$ are preimages of the $B$-Schubert divisors via the projection $\varphi\colon G/H \rightarrow G/P$.
        The set of $B$-colors on $G/H$ is denoted by $\cD^{B}$.

        \item The closures of $B$-colors of $G/H$ in $X$ are called \emph{$B$-colors} of $X$.

        \item $X$ is called \emph{toroidal} if no $B$-color of $X$ contains a $G$-orbit of $X$. Notice this property does not depend on the choice of $B$.
    \end{enumerate}
\end{definition}

\begin{remark}
    In the literature concerning the so-called \emph{colored fan}, the terminology \emph{color} refers to a color (in our definition) that contains a $G$-orbit, and accordingly, toroidal horospherical varieties are called horospherical varieties \emph{without color}.
    These terminologies are not used in this paper.
\end{remark}

Since $G/H$ contains the open dense $B$-orbit, we have a short exact sequence of groups
\[
    1 \rightarrow \C^{\times} \xrightarrow{\iota} (\C(G/H)^{\times})^{(B)} \xrightarrow{\mu} \Lambda^{B}_{G/H} \rightarrow 1
\]

where
\begin{itemize}   
    \item $(\C(G/H)^{\times})^{(B)}$ is the multiplicative group of nonzero rational $B$-eigenfunctions on $G/H$, that is,
    \[ (\C(G/H)^{\times})^{(B)} = \{ f\in\C(G/H)\setminus \{0\}\colon \exists \chi_f\in M_B \text{ s.t. } b.f=\chi_{f}(b) f, \, \forall b\in B\}; \]
    
    \item $\Lambda^{B}_{G/H}$ is the sublattice of $M_{B}$ defined  by
    \[ \Lambda^{B}_{G/H} = \{\chi_f \in M_B\colon f\in (\C(G/H)^\times)^{(B)}\}; \]
    \item $\mu$ sends $f$ to $\chi_{f}$, and $\iota$ sends $\C^{\times}$ to the subgroup of nonzero constant functions on $G/H$.
\end{itemize}

Using horosphericality of $G/H$, one may identify $\Lambda^{B}_{G/H}$ and $M_{G/H}$ (see \cite[\S7.2]{Timashev}); we briefly recall the idea below.
Choose a Borel subgroup $B^{-}$ such that $H$ contains $R^{u}(B^{-})$, its maximal torus $T$ and the opposite Borel subgroup $B^{+}$.
Note that $P$ contains $B^{-}$, and let us denote by $P^{+}$ the parabolic subgroup opposite to $P$.
In $G/P$, $B^{+}.(eP)$ is the open Schubert cell, and its preimage $\varphi^{-1}(B^{+}.(eP))$ can be identified with $R^{u}(P^{+}) \times S$ via the map $R^{u}(P^{+}) \times S \rightarrow \varphi^{-1}(B^{+}.(eP))$, $(u,\, s) \mapsto u.s$ (cf. \cite[Corollary~3.5]{Kempf}).
In particular, rational $S$-eigenfunctions on $S=\varphi^{-1}(e P)$ extend to rational $B^{+}$-eigenfunctions on $G/H$.
That is, the restriction map $(\C(G/H)^{\times})^{(B^{+})} \rightarrow (\C(S)^{\times})^{(S)}$ has an inverse, and hence we have the following commutative diagram:
\begin{equation} \label{diagram:weight lattice of G/H}
    \begin{tikzcd}
        1 \arrow[r] & \C^{\times} \arrow[r, "\iota"] \arrow[d, "="] & (\C(G/H)^{\times})^{(B^{+})} \arrow[r, "\mu"] \arrow[d, "\simeq"] & \Lambda^{B^{+}}_{G/H} \arrow[r] & 1 \\
        1 \arrow[r] & \C^{\times} \arrow[r] & (\C(S)^{\times})^{(S)} \arrow[r] & \Lambda^{S}_{S} = M_{G/H} \arrow[r] & 1
    \end{tikzcd}
\end{equation}
Therefore we obtain an isomorphism $\Lambda^{B^{+}}_{G/H} \rightarrow M_{G/H}$ defined by $\chi_{f} \mapsto \chi_{(f|_{S})}$.

For any prime divisor $D$ in $X$, we denote by $\nu_D$ its associated valuation. Since $\nu_D(\iota(\C^\times))=0$, it induces a linear functional on $\Lambda^{B^{+}}_{G/H}$, and hence on $M_{G/H}$.

\begin{definition}\label{definition:colormap}
    We define the \emph{color map} 
    \[ \epsilon^{+}\colon \cD^{B^{+}} \rightarrow N_{G/H}\]
    by sending a $B^{+}$-color $D$ to the induced functional $\epsilon^+(D)$ on $M_{G/H}$.
\end{definition}

One can compute the color map using the description of $\cD^{B^{+}}$ as the set of preimages of $B^{+}$-Schubert divisors on $G/P$.
To do this, let $\Pi^{+}$ be the set of simple roots of $G$ with respect to $B^{+}$ (and $T$), and let $\Pi^{+}_{P}$ be its subset consisting of simple roots that are not roots of $P$.
For each $\alpha \in \Pi^{+}_{P}$ and the Weyl element $s_{\alpha}$ corresponding to the simple reflection with respect to $\alpha$, $\overline{B^{+}s_{\alpha} P}$ is a $B^{+}$-Schubert divisor, and hence $D_{\alpha}\coloneqq \varphi^{-1}(\overline{B^{+}s_{\alpha}P}) \in \cD^{+}$.

\begin{proposition}[{\cite[Proposition 20.13]{Timashev}}] \label{prop:color is coroot}
    For $m \in M_{G/H}$, the value of the pairing between $m$ and $\epsilon^{+}(D_{\alpha})$ is equal to the value of pairing between the character $T \twoheadrightarrow S \xrightarrow{\chi^{m}} \C^{\times}$ and the simple coroot $\check{\alpha}$.
\end{proposition}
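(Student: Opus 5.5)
The plan is to compute $\nu_{D_\alpha}$ on explicit $B^{+}$-eigenfunctions, one for each weight in a generating set of $M_{G/H}$, and then extend to all of $M_{G/H}$ by linearity. Both sides of the claimed identity are additive in $m$: the left side because $\nu_{D_\alpha}$ is a valuation, the right side obviously. So it suffices to treat $m$ in a generating set. Identify $m$ with the character $\lambda\colon P\to P/H=S\xrightarrow{\chi^m}\C^\times$ restricted to $T$. The group $M_{G/H}$ is spanned by characters of $P$ trivial on $H$. After adding to $m$ a large multiple of a fixed regular such character (or using differences), we may assume that $\lambda$ (or $-\lambda$, depending on conventions) is dominant for $B^{+}$.

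\emph{Construction of the eigenfunction.} Take an irreducible $G$-module $V$ containing a $B^{-}$-eigenvector $v$ of weight $\lambda$; this is a lowest weight vector. Its line $\C v$ is $P$-stable with $P$ acting through $\lambda$, so $H\subset\ker\lambda$ fixes $v$. Let $\xi\in V^{*}$ be a $B^{+}$-eigenvector pairing nontrivially with $v$. Define $f(gH)=\langle \xi, g^{-1}v\rangle$, or $\langle\xi,gv\rangle$ according to the left/right conventions. Then:
\begin{itemize}
\item $f$ is well defined on $G/H$ and is a $B^{+}$-eigenfunction.
\item Its restriction to $S=\varphi^{-1}(eP)$ is, up to a constant, $\chi^{\pm m}$; the sign is fixed by diagram \eqref{diagram:weight lattice of G/H}.
\item Its divisor is $\varphi$-pulled back from the divisor of the section $gP\mapsto\langle\xi,gv\rangle$ of the homogeneous line bundle $G\times^{P}\C_{\lambda}$ on $G/P$.
\end{itemize}
Hence $\nu_{D_\alpha}(f)$ equals the vanishing order of this extremal-weight section along the Schubert divisor $\overline{B^{+}s_\alpha P}$. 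The vanishing along the $G$-invariant part is irrelevant, since there is none on $G/H$.

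\emph{Computing the order along $\overline{B^{+}s_\alpha P}$.} Reduce to rank one. Consider the curve $C_\alpha=\overline{U_{-\alpha}\,\dot s_\alpha P}\cong\P^1$, or equivalently the orbit of the $\mathrm{SL}_2$-subgroup $G_\alpha$ attached to $\alpha$ through $eP$. This curve meets the Schubert divisor $\overline{B^{+}s_\alpha P}$ transversally at the single point $s_\alpha P$, and it meets no other $B^{+}$-Schubert divisor there. Restricted to $C_\alpha$, the section becomes $t\mapsto\langle\xi,\exp(te_{\alpha})v\rangle$ in the affine chart around $eP$. Since $v$ is extremal of weight $\lambda$, the $\alpha$-string through $v$ spans a $G_\alpha$-module of dimension $\langle\lambda,\check\alpha\rangle+1$ (up to the sign convention). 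The function is then a polynomial of degree exactly $|\langle\lambda,\check\alpha\rangle|$ whose leading coefficient is nonzero; this uses that $\xi$ pairs with the extremal vector $\dot s_\alpha v$. Its order at the point at infinity, namely $s_\alpha P$, is therefore $\langle\lambda,\check\alpha\rangle$ up to sign. By transversality this equals $\nu_{D_\alpha}(f)$, which gives $\langle m,\epsilon^{+}(D_\alpha)\rangle=\langle\lambda,\check\alpha\rangle$.

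\emph{Main obstacle.} The delicate points are bookkeeping rather than conceptual. First, the signs coming from the choices $B^{-}\subset P$, left versus right translations, and lowest versus highest weights must be matched consistently with the isomorphism $\Lambda^{B^{+}}_{G/H}\simeq M_{G/H}$ so that no stray minus sign appears. Second, the transversality claim, that the order of vanishing along the divisor equals the order of vanishing along the curve $C_\alpha$, must be justified. I would settle the transversality in the chart $R^{u}(P^{+})\times S$ translated by $\dot s_\alpha$, where $\overline{B^{+}s_\alpha P}$ is a coordinate hyperplane and $C_\alpha$ is a coordinate axis.
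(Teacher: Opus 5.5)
The paper does not prove this proposition; it quotes it from Timashev's book. Your architecture is the standard one and is sound: $B^{+}$-eigenfunctions $gH\mapsto\langle\xi,gv\rangle$ built from extremal vectors, their divisors pulled back from $B^{+}$-eigensections on $G/P$, and a rank-one computation on $C_\alpha=G_\alpha\cdot eP$. Your geometric claims about $C_\alpha$ are also correct: it meets the complement of $B^{+}eP$ only at $s_\alpha P$, and there it is transversal to $\overline{B^{+}s_\alpha P}$ alone.

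Two steps, however, fail as written.

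First, you cannot in general assume $\lambda$ dominant (or antidominant) while keeping it trivial on $H$. The lattice $M_{G/H}$ may contain no nonzero such character, because nothing forces $\epsilon^{+}(\cD^{+})$ to lie in a strictly convex cone. Take $G=\mathrm{SL}_2\times\mathrm{SL}_2$, $P=B^{-}\times B^{-}$ and $H=\ker(\varpi_1-\varpi_2)\subset P$. Then $N_G(H)=P$ and $M_{G/H}=\Z(\varpi_1-\varpi_2)$, which pairs to $1$ and $-1$ with $\check\alpha_1$ and $\check\alpha_2$. Here $G/H$ is the total space of $\cO(1,-1)$ on $\P^1\times\P^1$ minus the zero section, so $\C[G/H]=\C$ and your construction yields only constants.

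The repair is to drop triviality on $H$.
\begin{enumerate}
\item Write $m=\mu_1-\mu_2$ with $\mu_1,\mu_2$ $B^{+}$-dominant characters of $P$. Such characters span $M_P\supseteq M_{G/H}$, since $2\rho_{P^{+}}$ is a strictly dominant character of $P$.
\item Take lowest weight vectors $v_i$ of weight $-\mu_i$ and $B^{+}$-eigenvectors $\xi_i$ pairing nontrivially with them. Then $f_i(g)=\langle\xi_i,gv_i\rangle$ satisfies $f_i(gp)=\mu_i(p)^{-1}f_i(g)$ for $p\in P$.
\item Hence $f_1/f_2$ is a rational $B^{+}$-eigenfunction on $G/H$ of weight $m$, with divisor $\varphi^{*}(\mathrm{div}\,s_1-\mathrm{div}\,s_2)$.
\end{enumerate}
The flag-variety computation is therefore needed for all dominant characters of $P$, not only those in $M_{G/H}$.

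Second, the rank-one computation is incorrect as stated. Since $\xi$ is a $B^{+}$-eigenvector, $e_\alpha\xi=0$, so
\[
t\mapsto\langle\xi,\exp(te_\alpha)v\rangle=\langle\exp(-te_\alpha)\xi,v\rangle=\langle\xi,v\rangle
\]
is \emph{constant}. This is as it must be, since the section has no zeros on the open cell $B^{+}eP$. Also, $\xi$ has weight $-\lambda$, so it is orthogonal to $\dot s_\alpha v$, which has weight $s_\alpha\lambda\neq\lambda$ whenever $\langle\lambda,\check\alpha\rangle\neq 0$. Finally, the inference ``degree $d$ in the chart at $eP$, hence order $d$ at $s_\alpha P$'' is backwards. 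A section of a degree-$d$ line bundle on $\P^1$ given by $p(t)$ on the affine chart vanishes at infinity to order $d-\deg p$.

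The computation should be made in the chart at $s_\alpha P$ that you already use for transversality. With $n=-\langle\lambda,\check\alpha\rangle\ge 0$,
\[
\langle\xi,\exp(xe_{-\alpha})\dot s_\alpha v\rangle=\frac{x^{n}}{n!}\,\langle\xi,e_{-\alpha}^{n}\dot s_\alpha v\rangle .
\]
This holds because $\dot s_\alpha v$ has weight $\lambda+n\alpha$ and only the weight-$\lambda$ component pairs with $\xi$. Moreover $e_{-\alpha}^{n}\dot s_\alpha v\in\C^{\times}v$, because $\dot s_\alpha v$ and $v$ are the highest and lowest weight vectors of the $(n+1)$-dimensional $G_\alpha$-module generated by $v$. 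Since $\langle\xi,v\rangle\neq 0$, the order along $\overline{B^{+}s_\alpha P}$ is exactly $n=\langle m,\check\alpha\rangle$ with $m=-\lambda$, which is the claim. You can even do this directly on $G/H$: the curve $x\mapsto\exp(xe_{-\alpha})\dot s_\alpha H$ crosses $D_\alpha$ transversally.
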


\subsection{Toroidal horospherical varieties}\label{subsection:ToroidalHorospherical}

From now on, we focus on the case of toroidal horospherical varieties.
The following theorem is a consequence of the Luna--Vust theory:
\begin{theorem}[{\cite[Exemple~2.3]{pasquier}, \cite[Theorem 15.10]{Timashev}}] \label{thm:toroidal horo as fiber bundle}
    Let $X$ be a toroidal horospherical variety with an open dense $G$-orbit $G/H$.
    The natural projection $\varphi\colon G/H \rightarrow G/P$ extends to a $G$-equivariant morphism $\Phi\colon X \rightarrow G/P$.
    As a consequence, $X$ is isomorphic to the parabolic induction from $F$ via $P\to S$, that is the $G$-equivariant fiber bundle over $G/P$ defined as
    \[
        G\times^{P}F\coloneqq (G \times F)/ \sim,
    \]
    where $F\coloneqq \Phi^{-1}(eP)$ and $(g,\,x) \sim (gp^{-1},\, p.x)$ for every $p\in P$.
\end{theorem}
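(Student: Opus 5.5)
The plan is to reduce both claims to the Luna--Vust classification of toroidal spherical embeddings of $G/H$ by colorless fans in $N_{G/H}\otimes\R$. The morphism $\Phi$ will be obtained as a map of embeddings, and then $X$ will be identified with the parabolic induction $G\times^{P}F$ via an equivariant bijective morphism onto a normal target.

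\emph{Step 1 (extension of $\varphi$).} We view $G/P$ as a spherical embedding of the homogeneous space $G/P$ itself. Its weight lattice is trivial, and its colors are the $B$-Schubert divisors. The map $\varphi\colon G/H\to G/P$ is a dominant $G$-equivariant morphism of homogeneous spaces, and it induces the zero map $N_{G/H}\to N_{G/P}=0$. By the Luna--Vust criterion for morphisms of embeddings (cf. \cite[Theorem 15.10]{Timashev}), $\varphi$ extends to $X\to G/P$ if and only if, for every colored cone $(\sigma,\emptyset)$ of $X$, the image of $\sigma$ lies in a colored cone of $G/P$ and the colors are compatible. Since $X$ is toroidal, every cone has an empty color set, so the color condition is vacuous. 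The image of $\sigma$ is $0$, which lies in the unique cone of $G/P$. Hence $\Phi$ exists.

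A more hands-on alternative is to work locally. Each $G$-orbit $Y$ has a $B$-stable affine chart $X_{Y,B}$, which by toroidality is the complement of the closures of all $B$-colors. By the local structure theorem, $X_{Y,B}\simeq R^{u}(P^{+})\times Z$ with $Z$ an affine $S$-toric variety. We then set $\Phi(u,z)=uP$ and glue these maps using $G$-equivariance.

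\emph{Step 2 (induction).} Set $F=\Phi^{-1}(eP)$. This is a closed $P$-stable subvariety, and $H$ acts trivially on $F\cap G/H=S$ by Proposition \ref{prop:fiber is torus}. Because $H$ is normal in $P$ and $S$ is dense in $F$, which will follow from flatness and irreducibility below, $H$ acts trivially on all of $F$. Thus $F$ is an $S$-variety containing $S$ as a dense orbit. The map $G\times^{P}F\to X$, $[g,x]\mapsto g.x$, is a $G$-equivariant morphism. It is bijective, since $g.x$ determines $gP=\Phi(g.x)$, and hence determines $x$ up to the $P$-equivalence. Normality of $F$ follows from the local product description $R^{u}(P^{+})\times Z$ restricted to the fiber over $eP$: the fiber of the projection to $R^{u}(P^{+})$ is $Z$, which is normal. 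This shows that $F$ is an $S$-toric variety, covered by the charts $Z$. Finally, $G\times^{P}F$ is a variety, and $X$ is normal, so Zariski's main theorem turns the bijective birational morphism into an isomorphism.

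\emph{Main obstacle.} The delicate point is the local structure theorem giving $X_{Y,B}\cong R^{u}(P^{+})\times Z$ with $Z$ toric in the toroidal case. This requires choosing the correct Borel subgroup $B^{+}$ relative to $P$, and proving that the $R^{u}(P^{+})$-action on the chart is free with the slice $Z$ stable under $S$. This is where toroidality (no colors containing orbits) is genuinely used. I would cite \cite[Theorem 15.10]{Timashev} and \cite[Exemple~2.3]{pasquier} for it rather than reprove it.
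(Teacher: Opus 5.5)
Your proposal is correct and takes the same Luna--Vust route the paper relies on; the paper gives no proof of its own and only cites \cite{pasquier} and \cite{Timashev}. Applied with $N_{G/P}=0$ and empty color sets, the morphism criterion is exactly why toroidality lets $\varphi$ extend, and your Step~2 could be streamlined: $G$-equivariance alone gives an isomorphism $R^{u}(P^{+})\times F\to\Phi^{-1}(R^{u}(P^{+})P/P)$, $(u,x)\mapsto u.x$, with inverse $y\mapsto(u_y,u_y^{-1}.y)$ where $\Phi(y)=u_yP$, so irreducibility and normality of $F$ follow from those of $X$ (which is what actually gives the density of $S$ in $F$, not ``flatness''), and $G$-translating this gives $G\times^{P}F\simeq X$ directly, without the local structure theorem or Zariski's main theorem.
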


In the following, for a $G$-variety $X$, we denote by $\Aut_{G}(X)$ the group of $G$-equivariant automorphisms of $X$, that is,
\[ \Aut_{G}(X) \coloneqq \{\varphi \in \Aut(X)\colon \varphi(g. x) = g. \varphi(x), \text{ for every } g \in G,\, x \in X\}.\]

\begin{theorem}[{\cite[Proposition 1.8]{Timashev}, \cite[Corollary 5.4]{Knop}}]
\label{thm:equiv aut}
    Let $X$ be a toroidal horospherical variety with an open dense $G$-orbit $G/H$.
    There are isomorphisms $\Aut_{G}(X) \simeq \Aut_{G}(G/H) \simeq S$ where the first isomorphism is the restriction map, and the second is given by $S = P/H \rightarrow \Aut_{G}(G/H)$, $p  H \mapsto (g  H \mapsto (g  p^{-1})  H)$.
\end{theorem}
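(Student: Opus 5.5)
The plan is to prove the two isomorphisms separately and then observe that the explicit map $S \rightarrow \Aut_{G}(G/H)$ realizes the second one.

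\emph{The restriction map $\Aut_{G}(X) \rightarrow \Aut_{G}(G/H)$.} Every $G$-equivariant automorphism of $X$ permutes the $G$-orbits. Since $G/H$ is the unique open $G$-orbit, it is preserved, so restriction gives a well-defined group homomorphism. It is injective because $G/H$ is dense in $X$ and $X$ is separated.

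For surjectivity, fix $\psi \in \Aut_{G}(G/H)$ and view it as a $G$-equivariant birational self-map of $X$. By Luna--Vust theory it extends to an automorphism of $X$ exactly when it preserves the combinatorial data of $X$, namely the fan in $N_{G/H}\otimes\R$ (there are no colors, since $X$ is toroidal). Two facts should give this.
\begin{itemize}
\item Because $\psi$ commutes with $G$, it maps $B^{+}$-eigenfunctions to $B^{+}$-eigenfunctions of the same weight.
\item It acts on $G$-invariant valuations trivially, because $G$-invariant valuations on $G/H$ are determined by their values on $B^{+}$-eigenfunctions, and $\psi^{*}f$ differs from $f$ only by a constant.
\end{itemize}
Hence $\psi$ fixes each colored cone, and so it extends.

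A more concrete route, which I would actually write, uses Theorem~\ref{thm:toroidal horo as fiber bundle}. Write $X = G\times^{P}F$. The element $\psi$ will turn out to be right multiplication by some $pH$ (next step). That element acts on $F$ through the torus action of $S$ on the toric variety $F$, and this commutes with the $P$-action on $F$ since $P$ acts through $P \rightarrow S$ and $S$ is abelian. So $[g,x]\mapsto[g,\,(pH)^{-1}.x]$ is a well-defined $G$-equivariant automorphism of $G\times^{P}F$ extending $\psi$.

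\emph{The identification $\Aut_{G}(G/H)\simeq S$.} This is the standard fact $\Aut_{G}(G/H)\simeq N_{G}(H)/H$. A $G$-equivariant map is determined by the image $g_{0}H$ of $eH$. The image must be fixed by $H$, so $g_{0}^{-1}Hg_{0}\subseteq H$. For an algebraic group, dimension and component count force $g_{0}\in N_{G}(H)=P$ (Proposition~\ref{prop:normalizer-parabolic}).

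Conversely, each $pH\in P/H$ gives the well-defined map $gH\mapsto gp^{-1}H$. The inverse $p^{-1}$ ensures that the assignment is a homomorphism and not an anti-homomorphism, although $S$ is commutative anyway. Injectivity is clear, since $gp^{-1}H=gH$ for all $g$ forces $p\in H$.

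The main obstacle is the extension step. One has to justify that every element of $\Aut_{G}(G/H)$ comes from $S$ before extending, so the order of the argument matters: first identify $\Aut_{G}(G/H)$ with $S$, then extend fiberwise using the toric $S$-action on $F$. This avoids invoking the Luna--Vust valuation argument.
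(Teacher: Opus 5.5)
Your proof is correct. The paper gives no proof of its own for this statement and instead cites two sources.

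\emph{Second isomorphism.} The paper cites Timashev's Proposition~1.8 for $\Aut_{G}(G/H)\simeq N_{G}(H)/H$. That is exactly your normalizer argument: the inclusion $g_{0}^{-1}Hg_{0}\subseteq H$ is forced to be an equality by dimension and component count.

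\emph{Extension step.} The paper cites Knop, whose argument is essentially your first, sketched route. It works in Luna--Vust theory for arbitrary spherical embeddings. A $G$-automorphism of $G/H$ rescales each $B^{+}$-eigenfunction by a constant, so it acts trivially on $G$-invariant valuations. It therefore extends to any embedding whose colored data it preserves, in particular to every toroidal one.

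Your concrete route replaces this criterion with the parabolic induction $X\simeq G\times^{P}F$ recalled just before the statement. Two checks are needed, and both go through:
\begin{enumerate}
\item The map $[g,x]\mapsto[g,p^{-1}.x]$ is well defined. Indeed, $H$ acts trivially on $S=F\cap G/H$, which is dense in the irreducible fiber $F$, hence $H$ acts trivially on $F$. So $P$ acts on $F$ through the abelian group $S$. Equivalently, the map is $[g,x]\mapsto[gp^{-1},x]$.
\item The map restricts to $gH\mapsto gp^{-1}H$ on the open orbit, because $[g,p^{-1}H]=[gp^{-1},eH]$. You asserted this without checking it; it is a one-line computation worth writing out.
\end{enumerate}

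\emph{What each approach buys.} Your route is explicit: it exhibits the extended action as a morphism $S\times X\to X$. So for complete $X$ the bijection $S\to\Aut_{G}(X)$ is a homomorphism of algebraic groups. That is the form in which the paper later uses the statement, for example when treating $\Aut_{G}(X)$ as a maximal torus of $K^{0}$. It does not, however, avoid Luna--Vust theory, since the fiber bundle description of $X$ is itself derived from it. It is also tied to the toroidal horospherical setting, whereas Knop's argument covers all spherical embeddings.
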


Let $X$ be a toroidal horospherical variety with an open dense orbit $G/H$, and $\Phi\colon X \rightarrow G/P$ the morphism extending $\varphi\colon G/H \rightarrow G/P$.
Put $F\coloneqq \Phi^{-1}(e  P)$ so that $X \simeq G\times^{P}F$.
The following well-known proposition is an immediate consequence of Proposition \ref{prop:fiber is torus}:
\begin{proposition}\label{proposition:FiberisS-ToricVariety}
    The $P$-action on $F$ factors through $P \twoheadrightarrow S$, and $F$ is a toric variety with respect to the $S$-action.
\end{proposition}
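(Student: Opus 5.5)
The statement has two parts: the $P$-action on $F$ factors through the quotient $P \twoheadrightarrow S = P/H$, and $F$ is a toric variety for the resulting $S$-action. I will treat them in this order.

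First I show that $H$ acts trivially on $F = \Phi^{-1}(eP)$. Since $\Phi$ is $G$-equivariant and $P$ fixes $eP \in G/P$, the fiber $F$ is $P$-stable, so $P$ acts on $F$. Because $X$ contains $G/H$ as an open dense subset, the open subset $F \cap G/H = \varphi^{-1}(eP)$ is dense in $F$. To justify this density I would use that $X \simeq G\times^{P}F$ is locally trivial over $G/P$: over the open Schubert cell it is isomorphic to $R^{u}(P^{+}) \times F$, and $G/H$ corresponds to $R^{u}(P^{+}) \times (F\cap G/H)$. A product $U\times W$ is dense in $U \times F$ exactly when $W$ is dense in $F$, so density of $G/H$ in $X$ gives density of $F \cap G/H$ in $F$.

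By Proposition \ref{prop:fiber is torus}, $H$ acts trivially on $\varphi^{-1}(eP) = S$. For each $h \in H$, the automorphism of $F$ given by $h$ therefore agrees with the identity on a dense open subset. Since $F$ is separated, $h$ acts as the identity on all of $F$. Hence the $P$-action factors through $P/H = S$.

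For the toric structure, the same proposition says the induced $S$-action on the dense open subset $\varphi^{-1}(eP)\subset F$ is simply transitive, that is, it is $S$ acting on itself by multiplication. It remains to check that $F$ is a normal variety. $F$ is irreducible because it is the closure of the irreducible set $S$. From the local product structure $R^{u}(P^{+})\times F$, which is an open subset of the normal variety $X$, I would deduce normality of $F$: normality of a product with a smooth factor descends to the other factor. Alternatively, one can take $F$ to be the fiber over a point and use flat base change together with the fact that the fiber of a locally trivial bundle is a retract.

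I expect the main obstacle to be the density of $F \cap G/H$ in $F$ and the normality of $F$. Both are handled by the local triviality $\Phi^{-1}(B^{+}.eP)\simeq R^{u}(P^{+})\times F$, which comes from Theorem \ref{thm:toroidal horo as fiber bundle}.
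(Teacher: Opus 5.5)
Your proposal is correct and follows the route the paper indicates: the paper gives no separate proof and simply calls the statement an immediate consequence of Proposition~\ref{prop:fiber is torus}, which is the deduction you carry out. What you add is detail the paper leaves implicit: the density of $\varphi^{-1}(eP)$ in $F$, and the normality of $F$ (plus its reducedness, which your ``agree on a dense open subset'' step also needs). All of these follow, as you say, from the local product structure $\Phi^{-1}(B^{+}.eP)\simeq R^{u}(P^{+})\times F$ and faithfully flat descent.
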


Therefore toroidal horospherical varieties are equivariant toric variety bundles over rational homogeneous spaces.
Conversely, for any $S$-toric variety $F'$, the parabolic induction $G \times^{P} F'$ from $F'$ via $P\to S$ is a toroidal horospherical $G$-variety with an open orbit $G/H$ (cf. \cite[Exemple~2.3]{pasquier}).

To sum up, a toroidal horospherical variety with an open dense orbit $G/H$ comes equipped with the color map $\epsilon^{+}\colon \cD^{B^{+}} \rightarrow N_{G/H}$ and a fiber bundle structure $X \simeq G \times^{P}F \xrightarrow{\Phi} G/P$ for an $S$-toric variety $F$.
The relationship between $\epsilon^{+}(\cD^{B^{+}})$ and the fan of $F$ shall play a significant role in our description of $\Aut^{0}(X)$ (see Section \ref{section:maintheorem}).

\subsection{Automorphism groups}

From now on, we further assume that $X$ is complete so that $\Aut^{0}(X)$ is a linear algebraic group.

Our goal is to study $\Aut^{0}(X)$ using $\Aut^{0}(G/P)$ and $\Aut^{0}(F)$ via the $F$-fiber bundle $\Phi\colon X \rightarrow G/P$.
First, since $\Phi\colon X \rightarrow G/P$ is a morphism with connected fibers, by Blanchard's lemma (see \cite{blanchard}, \cite[Proposition~4.2.1]{BrionBlanchard}), the $\Aut^{0}(X)$-action on $X$ descends to an $\Aut^{0}(X)$-action on $G/P$, i.e., we have a map $\Phi_{*}\colon \Aut^{0}(X) \rightarrow \Aut^{0}(G/P)$.
The structure of $\Aut^{0}(G/P)$ is well understood:
\begin{theorem}[{\cite{DemazureRHS}, \cite[Theorems 3.3.1, 3.3.2]{AkhBook}}] \label{thm:Aut G/P}
    Let $L$ be a connected reductive linear algebraic group, and $Q$ a parabolic subgroup.
    \begin{enumerate}
        \item If $L$ acts on the rational homogeneous space $L/Q$ faithfully, then $L$ is a semisimple Lie group of adjoint type.
        Moreover, for the decomposition $L = \prod_{i}L_{i}$ into the simple factors, if $Q_{i}\coloneq Q \cap L_{i}$, a parabolic subgroup of $L_{i}$, then $L/Q \simeq \prod_{i}L_{i}/Q_{i}$, and thus
        \[
            \Aut^{0}(L/Q) \simeq \prod_{i} \Aut^{0}(L_{i}/Q_{i}).
        \]
        \item If $L$ is simple, then the morphism $L \rightarrow \Aut^{0}(L/Q)$ is surjective, except in the following cases:
        \begin{enumerate}
            \item $L$ is of type $B_{l}$ ($l \ge 3$) and $Q$ is the maximal parabolic associated to the unique short simple root, i.e., its marked Dynkin diagram is

            \begin{center}
            \begin{tikzpicture}[scale=2]
                \dynkin B{*.*x}.
            \end{tikzpicture}
            \end{center}
            
            In this case, $\Aut^{0}(L/Q)$ is the adjoint group of type $D_{l+1}$.
            \item $L$ is of type $C_{l}$ ($l \ge 2$) and $Q$ is the maximal parabolic associated to the unique short simple root that represents an end node in the Dynkin diagram of $L$, i.e., its marked Dynkin diagram is

            \begin{center}
            \begin{tikzpicture}[scale=2]
                \dynkin C{x*.**}.
            \end{tikzpicture}
            \end{center}
            
            In this case, $\Aut^{0}(L/Q)$ is the adjoint group of type $A_{2l-1}$.
            \item $L$ is of type $G_{2}$ and $Q$ is the maximal parabolic associated to the unique short simple root, i.e., its marked Dynkin diagram is

            \begin{center}
            \begin{tikzpicture}[scale=2]
               \dynkin G{x*}.
            \end{tikzpicture}          
            \end{center}
            
            In this case, $\Aut^{0}(L/Q)$ is the adjoint group of type $B_{3}$.
        \end{enumerate}
    \end{enumerate}
\end{theorem}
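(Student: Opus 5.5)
The plan is to treat the two parts separately: part (1) by structure theory of reductive groups together with Blanchard's lemma, and part (2) by Demazure's cohomological computation of $H^{0}(L/Q,\,T_{L/Q}) = \Lie(\Aut^{0}(L/Q))$ (\S\ref{subsection:Aut0}).

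\textbf{Part (1).} First show that $L$ is semisimple of adjoint type.
The kernel of the $L$-action on $L/Q$ is $\bigcap_{g} gQg^{-1}$. This intersection contains the center $Z(L)$, since $Z(L)$ lies in every Borel subgroup and hence in every conjugate of $Q$. So faithfulness forces $Z(L)$ to be trivial. In particular the radical of $L$, which is a central torus, is trivial, and $L$ is semisimple with trivial center, i.e. adjoint.

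Next decompose into simple factors.
An adjoint semisimple group is the direct product $\prod_i L_i$ of its simple factors. A Borel subgroup $B\subset Q$ decomposes as $\prod_i (B\cap L_i)$, and $Q$ is generated by $B$ and root subgroups, each of which lies in a single factor. Hence $Q=\prod_i Q_i$ with $Q_i = Q\cap L_i$ parabolic in $L_i$, and so $L/Q\simeq\prod_i L_i/Q_i$.

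Finally compare automorphism groups.
\begin{itemize}
\item Each projection $L/Q\to L_i/Q_i$ has connected fibers. By Blanchard's lemma, $\Aut^{0}(L/Q)$ acts on every factor, which gives a homomorphism $\Aut^{0}(L/Q)\to\prod_i\Aut^{0}(L_i/Q_i)$.
\item This homomorphism is injective, because an automorphism acting trivially on all factors is trivial.
\item The obvious product action gives the converse inclusion.
\item On Lie algebras, $T_{\prod Y_i}=\bigoplus_i \mathrm{pr}_i^{*}T_{Y_i}$. Künneth together with $H^{0}(Y_j,\cO)=\C$ gives $H^0(\prod Y_i, T)=\bigoplus_i H^0(Y_i,T_{Y_i})$, which confirms the isomorphism.
\end{itemize}

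\textbf{Part (2).} Now let $L$ be simple.
Write $T_{L/Q}=L\times^{Q}(\mathfrak l/\mathfrak q)$. We always have $\mathfrak l\hookrightarrow H^{0}(L/Q,T_{L/Q})$, and the claim is that this is an equality outside the listed cases.

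To test this, filter $\mathfrak l/\mathfrak q$ by $Q$-submodules whose graded pieces are irreducible $Q$-modules, namely the pieces $\mathfrak l_{-k}$ of the grading defined by $Q$. Apply Borel--Weil--Bott to the associated homogeneous bundles, and use the long exact sequences to bound $H^{0}$. A global section not coming from $\mathfrak l$ can only arise when the lowest graded piece has a dominant lowest/highest weight that is not absorbed by the connecting maps. Demazure's analysis reduces this to the condition that some simple root $\alpha\in\Pi^+_Q$ (the simple roots not in $Q$) is short and "isolated" in the appropriate sense.

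Running this through the Dynkin diagrams leaves exactly three configurations:
\begin{itemize}
\item $(B_l,\alpha_l)$,
\item $(C_l,\alpha_1)$,
\item $(G_2,\alpha_1)$ with $\alpha_1$ short.
\end{itemize}

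\textbf{The exceptional cases.} Here identify the variety directly with a homogeneous space of a larger group:
\begin{itemize}
\item $B_l/P_l\simeq D_{l+1}/P_{l+1}$, the spinor variety, via the embedding $\mathrm{SO}_{2l+1}\subset\mathrm{SO}_{2l+2}$ acting transitively on one family of maximal isotropic subspaces.
\item $C_l/P_1\simeq\P^{2l-1}=A_{2l-1}/P_1$, since $\mathrm{Sp}_{2l}$ acts transitively on $\P^{2l-1}$.
\item $G_2/P_1\simeq Q^{5}=B_3/P_1$, via $G_2\subset\mathrm{SO}_7$ acting transitively on the quadric of isotropic lines.
\end{itemize}
For these larger groups, the first step shows that their action is surjective onto $\Aut^{0}$. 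This gives $\Aut^{0}$ of adjoint type $D_{l+1}$, $A_{2l-1}$ and $B_3$ respectively.

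\textbf{Main obstacle.} The hard step is the Borel--Weil--Bott bookkeeping in part (2): showing that no extra sections exist in all non-listed cases. The ways extra sections can appear have to be controlled uniformly through the graded pieces and the connecting homomorphisms. I would first try to reduce to maximal parabolics, via the fibration $L/Q\to L/Q'$ for $Q\subset Q'$ and Blanchard's lemma, and only then do the case check.
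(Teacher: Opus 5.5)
The paper gives no proof of this theorem; it quotes it from Demazure and Akhiezer. So I am judging your outline on its own terms.

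\textbf{Part (1) is fine.} The center lies in every Borel subgroup; $Q=\prod_i Q_i$ because root subgroups lie in single factors; and Blanchard's lemma together with the product action gives the isomorphism of automorphism groups.

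\textbf{The gap is in part (2).} This is the whole content of the theorem, and you never show that $H^0(L/Q,T_{L/Q})=\mathfrak l$ outside the three listed cases. The sentences ``Demazure's analysis reduces this to \dots'' and ``running this through the Dynkin diagrams leaves exactly three configurations'' assert the conclusion rather than derive it.

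Moreover, the mechanism you sketch cannot decide the question by itself, even for maximal $Q$. That mechanism is to bound $H^0$ by Borel--Weil--Bott on the graded pieces of $\mathfrak l/\mathfrak q$.
\begin{itemize}
\item The only dominant roots are $\theta$ and $\theta_s$. So the bound admits an extra $V(\theta_s)$ whenever $\theta_s$ is the highest weight of a graded piece.
\item This happens for the non-exceptional pair $(F_4,\alpha_3)$. The piece of $\alpha_3$-degree $3$ is spanned by the root vectors of $1231$ and $1232=\theta_s=\omega_4$, so your bound is $52+26$.
\item That $V(\omega_4)$ disappears only through a connecting map. The degree-$2$ piece has highest weight $1222=\theta_s-\alpha_3$, with $\langle 1222,\check\alpha_3\rangle=-2$, so its $H^1$ is $V(\theta_s)$. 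The degree-$1$ piece is acyclic. The connecting map $V(\theta_s)\to V(\theta_s)$ must then be shown to be an isomorphism, and this is exactly the step you leave open.
\end{itemize}
Your reduction to maximal parabolics via Blanchard is sound: the kernel of $\Aut^0(L/Q)\to\Aut^0(L/Q')$ has Lie algebra $H^0(L/Q',L\times^{Q'}H^0(Q'/Q,T))=0$. It does not rescue the argument, because the failure above already occurs for a maximal parabolic.

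Your heuristic that the extra sections come from ``the lowest graded piece'' is also inaccurate. For $(G_2,\alpha_1)$ the extra $V(\theta_s)$ comes from the middle piece, of $\alpha_1$-degree $2$ out of $3$.

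\textbf{The missing idea is Demazure's comparison with $L/B$,} which avoids these connecting maps. For $\pi\colon L/B\to L/Q$, use the exact sequence $0\to T_\pi\to T_{L/B}\to\pi^*T_{L/Q}\to 0$ together with rigidity of full flag varieties:
\begin{itemize}
\item $H^0(L/B,T)=\mathfrak l$ and $H^1(L/B,T)=0$;
\item the same for the fibres $Q/B$, so that $\pi_*T_\pi=L\times^Q\mathfrak m_{ss}$ and $R^1\pi_*T_\pi=0$;
\item $H^0(L/Q,L\times^Q\mathfrak m_{ss})=0$.
\end{itemize}
These give
\[ H^0(L/Q,T_{L/Q})/\mathfrak l\;\simeq\;\bigoplus_i H^1\bigl(L/Q,\,L\times^Q\mathfrak m_i\bigr), \]
where $\mathfrak m_i$ runs over the simple ideals of the Levi, with highest roots $\theta_i$.

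By Bott, a summand is nonzero if and only if $s_\alpha\cdot\theta_i$ is dominant for some simple root $\alpha$ outside the Levi. This forces $\langle\theta_i,\check\alpha\rangle\le -2$, so $\alpha$ is short and $s_\alpha\cdot\theta_i=\theta_s$. Since $\theta_s$ has full support, $Q$ must be maximal and $\alpha$ an end node. Among short end nodes, $(B_l,\alpha_l)$, $(C_l,\alpha_1)$ and $(G_2,\alpha_1)$ qualify. The pair $(F_4,\alpha_4)$ does not, since $\theta_{B_3}+\alpha_4=1221$ is not dominant.

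With this in place, your identification of the exceptional varieties with $D_{l+1}/P_{l+1}$, $\P^{2l-1}$ and the quadric $\Q^5$ correctly finishes the proof.
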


Since $\Aut^{0}(G/P)$ is equal to the image of $G$ in $\Aut(G/P)$ in most cases, one may expect that $\Phi_{*}$ is surjective.
In fact, due to Brion's result \cite{brionWonderful}, we can say more.
To state the result, let $\partial_{G}X$ be the complement of the open orbit $G/H$ in $X$, which is indeed a divisor (see \cite[Theorem 29.1]{Timashev}).
Define $\Aut(X, \, \partial_{G}X)$ to be the subgroup of $\Aut(X)$ consisting of automorphisms stabilizing every component of $\partial_{G}X$, and let $\Aut^{0}(X, \, \partial_{G}X)$ be its identity component.
The connected group $\Aut^{0}(X, \, \partial_{G}X)$ contains both the image of $G \rightarrow \Aut^{0}(X)$ and $\Aut_{G}(X)$, and stabilizes all $G$-orbits in $X$.

\begin{theorem}[{\cite[Theorem 4.4.1]{brionWonderful}}] \label{thm:brion}
    $\Phi_{*}$ sends $\Aut^{0}(X, \, \partial_{G}X)$ onto $\Aut^{0}(G/P)$, and the identity component of its kernel is $\Aut_{G}(X)$.
    Moreover, $\Aut_{G}(X)$ is the identity component of the center of $\Aut^{0}(X, \, \partial_{G}X)$.
    In particular, $\Aut^{0}(X, \, \partial_{G}X)$ is reductive.
\end{theorem}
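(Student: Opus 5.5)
The plan is to work at the level of Lie algebras, using the logarithmic tangent sheaf. Set $A\coloneqq \Aut^{0}(X,\,\partial_{G}X)$. Since $X$ is smooth and $\partial_{G}X$ is a divisor whose components are stabilized by $A$, I would identify $\Lie(A)$ with $H^{0}(X,\,T_{X}(-\log \partial_{G}X))$. This is the subspace of $H^{0}(X,T_X)=\Lie(\Aut^0(X))$ consisting of vector fields tangent to every boundary component, and in characteristic $0$ it is the Lie algebra of the stabilizer. I am implicitly assuming here that $\partial_{G}X$ is simple normal crossing. This holds because $X\simeq G\times^{P}F$ by Theorem~\ref{thm:toroidal horo as fiber bundle}, and $\partial_{G}X=G\times^{P}\partial F$ with $\partial F$ the toric boundary of the smooth toric variety $F$.

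\textbf{Step 1: the relative sequence.} The morphism $\Phi$ is a locally trivial fibration with fiber $F$, so we have an exact sequence
\[
0\to T_{X/(G/P)}(-\log \partial_{G}X)\to T_{X}(-\log\partial_{G}X)\to \Phi^{*}T_{G/P}\to 0 .
\]
The relative log tangent sheaf is $G\times^{P}T_{F}(-\log\partial F)$. For a smooth toric variety, $T_{F}(-\log \partial F)\simeq \cO_{F}\otimes \Lie(S)$, trivialized by the infinitesimal $S$-action. The group $P$ acts on $F$ through the abelian quotient $S$ (Proposition~\ref{proposition:FiberisS-ToricVariety}), so it acts trivially on $\Lie(S)$. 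Hence the relative log tangent sheaf is the trivial bundle $\cO_{X}\otimes\Lie(S)$.

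\textbf{Step 2: cohomology.} The fiber $F$ is complete, connected and rational, so $\Phi_{*}\cO_{X}=\cO_{G/P}$ and $R^{1}\Phi_{*}\cO_X=0$. Therefore $H^{0}(X,\cO_X)=\C$ and $H^{1}(X,\cO_X)=H^{1}(G/P,\cO)=0$. By the projection formula, $H^{0}(X,\Phi^{*}T_{G/P})=H^{0}(G/P,T_{G/P})=\Lie\Aut^{0}(G/P)$. Taking global sections of the sequence in Step~1 then gives
\[
0\to \Lie(S)\to \Lie(A)\xrightarrow{d\Phi_{*}} \Lie\Aut^{0}(G/P)\to 0 .
\]
Surjectivity on Lie algebras, together with the connectedness of $\Aut^{0}(G/P)$, shows that $\Phi_{*}(A)=\Aut^{0}(G/P)$. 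The identity component of $\ker\Phi_{*}|_{A}$ has Lie algebra $\Lie(S)$. It also contains $\Aut_{G}(X)\simeq S$: by Theorem~\ref{thm:equiv aut} this group acts by right multiplication $gH\mapsto gp^{-1}H$, hence fiberwise over $G/P$, and it lies in $A$. Comparing dimensions of these connected groups, $(\ker\Phi_{*}|_{A})^{0}=\Aut_{G}(X)$.

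\textbf{Step 3: center and reductivity.} The group $\Aut_{G}(X)$ is a torus that is normal in the connected group $A$. By rigidity of tori, the conjugation action of $A$ on it is trivial, so it is central. Moreover $A/\Aut_{G}(X)$ is isogenous to $\Aut^{0}(G/P)$, which is semisimple by Theorem~\ref{thm:Aut G/P}. So the radical of $A$ is contained in $\Aut_{G}(X)$, hence equals this torus. It follows that $R^{u}(A)$ is trivial and $A$ is reductive. The identity component of the center lies in the radical, and it contains the central torus $\Aut_{G}(X)$, so the two coincide.

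The main obstacle is Step~1. One must justify carefully that $\Lie(A)$ is exactly the space of log vector fields. One must also check that the triviality of $T_F(-\log\partial F)$ is $P$-equivariant, so that it globalizes over $G/P$. Everything after that is formal.
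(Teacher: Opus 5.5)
Your argument is correct, but it takes a genuinely different route from the paper. The paper gives no proof of its own: it imports the statement from Brion's Theorem~4.4.1, which holds for toroidal varieties well beyond the horospherical case.

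You instead exploit what is special to the horospherical situation. Since $X\simeq G\times^{P}F$ with $F$ toric and $P$ acting on $F$ through the abelian quotient $S$, the relative log tangent bundle $G\times^{P}T_{F}(-\log\partial F)$ is the trivial bundle $\cO_{X}\otimes\Lie(S)$. The logarithmic version of the relative tangent sequence (the paper uses the same device with $T_{\Phi}$ in Lemma~\ref{lemma: key lemma}(\ref{item:keylemma2})), together with $H^{1}(X,\cO_{X})=0$, then gives the exact sequence $0\to\Lie(S)\to\Lie(\Aut^{0}(X,\partial_{G}X))\to\Lie(\Aut^{0}(G/P))\to 0$. Everything after that is formal:
\begin{itemize}
\item surjectivity of $\Phi_{*}$;
\item identification of the kernel's identity component with $\Aut_{G}(X)$;
\item centrality, by rigidity of tori;
\item reductivity, by semisimplicity of $\Aut^{0}(G/P)$.
\end{itemize}

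What your approach buys is a short, self-contained and transparent proof. What the citation buys is twofold.
\begin{itemize}
\item Generality: a general toroidal spherical variety has no fibration over a flag variety with toric fibres on which $P$ acts through a torus, so the relative log tangent bundle is not trivial there.
\item Extra conclusions your argument does not produce: Brion's theorem also shows that $X$ is toroidal under $\Aut^{0}(X,\partial_{G}X)$ with the same colors. The paper relies on this later in Proposition~\ref{prop:rep-by-Aut-b} and Corollary~\ref{corollary:LeviSubgroup}.
\end{itemize}

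One small point: you use smoothness of $X$ to get an SNC boundary, whereas the statement appears before smoothness is imposed. This is harmless, since the paper only applies it under Setup~\ref{setup}. Your argument also extends to normal $X$: $T_{F}(-\log\partial F)\simeq\cO_{F}\otimes\Lie(S)$ holds for every normal toric variety when the log tangent sheaf is defined as the derivations preserving the boundary ideals.
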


\begin{corollary}\label{eqn:Blanchard}
    We have a short exact sequence:
    \begin{equation*} 
        \begin{tikzcd}
        1 \arrow[r] & K \coloneq \ker (\Phi_{*}) \arrow[r] &\Aut^{0}(X) \arrow[r, "\Phi_{*}"] & \Aut^{0}(G/P) \arrow[r] & 1.
    \end{tikzcd}
\end{equation*}
\end{corollary}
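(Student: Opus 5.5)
The statement to prove is Corollary~\ref{eqn:Blanchard}: the sequence $1 \to K \to \Aut^{0}(X) \xrightarrow{\Phi_{*}} \Aut^{0}(G/P) \to 1$ is exact. Exactness on the left and in the middle is immediate once $K$ is defined as $\ker(\Phi_{*})$. So the content is twofold: $\Phi_{*}$ must be a well-defined homomorphism of algebraic groups, and it must be surjective. We verify each in turn.

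First, well-definedness. The morphism $\Phi\colon X \to G/P$ is proper, since $X$ is complete. It also satisfies $\Phi_{*}\cO_{X} = \cO_{G/P}$. One way to see this is that $X \simeq G\times^{P}F$ is a locally trivial fiber bundle whose fiber $F$ is a complete normal toric variety, hence has $H^{0}(F,\cO_{F}) = \C$. Blanchard's lemma (\cite{blanchard}, \cite[Proposition~4.2.1]{BrionBlanchard}) therefore applies to the connected group $\Aut^{0}(X)$ acting on $X$. It yields a unique action of $\Aut^{0}(X)$ on $G/P$ making $\Phi$ equivariant. This gives a homomorphism of algebraic groups $\Aut^{0}(X) \to \Aut(G/P)$. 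Its image is connected, because $\Aut^{0}(X)$ is connected, so it lands in $\Aut^{0}(G/P)$. This homomorphism is $\Phi_{*}$.

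Second, surjectivity. This follows directly from Theorem~\ref{thm:brion}. The subgroup $\Aut^{0}(X,\,\partial_{G}X)$ is a closed connected subgroup of $\Aut(X)$, so it is contained in $\Aut^{0}(X)$. Theorem~\ref{thm:brion} states that $\Phi_{*}$ maps this subgroup onto $\Aut^{0}(G/P)$. The restriction of our $\Phi_{*}$ to the subgroup coincides with the map in that theorem, by the uniqueness part of Blanchard's lemma. Hence $\Phi_{*}(\Aut^{0}(X)) \supseteq \Phi_{*}(\Aut^{0}(X,\,\partial_{G}X)) = \Aut^{0}(G/P)$, which gives surjectivity.

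The only point needing any care is the compatibility just used: the descended action from Blanchard's lemma must agree with the map used in Brion's theorem. Both are characterized by equivariance of $\Phi$, so uniqueness in Blanchard's lemma settles it. Everything else is routine.
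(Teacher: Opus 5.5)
Your proposal is correct and follows the same route as the paper: $\Phi_{*}$ comes from Blanchard's lemma, and surjectivity comes from Theorem~\ref{thm:brion}, since $\Aut^{0}(X,\,\partial_{G}X) \subset \Aut^{0}(X)$ already maps onto $\Aut^{0}(G/P)$. Your explicit check that $\Phi_{*}\cO_{X} = \cO_{G/P}$, and your remark that the two descended actions agree by the uniqueness in Blanchard's lemma, spell out steps the paper leaves implicit.
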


Since $\Aut^{0}(G/P)$ is well understood (see Theorem \ref{thm:Aut G/P}), our main interest lies in $K$.
Geometrically, $K$ consists of fiberwise automorphisms belonging to $\Aut^{0}(X)$, and so we have a natural restriction map
\[
    \res\colon K \rightarrow \Aut(F), \quad f \mapsto f|_{F}.
\]

The following proposition seems well known to experts (see \cite[Proofs of Theorem 9.1 and Proposition 10.1]{PezziniReductive}).
For the reader's convenience, let us record its proof.

\begin{proposition} \label{prop-unip-kernel}
	We have the following.
	\begin{enumerate}
		\item \label{item:prop-unip-kernel 1} $\Aut_{G}(X)$ is a maximal torus of $K^{0}$.

		\item \label{item:prop-unip-kernel 2} The kernel of $\res \colon K^{0} \rightarrow \Aut^{0}(F)$ is unipotent.
	\end{enumerate}
\end{proposition}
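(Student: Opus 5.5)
Both parts rest on a single observation about a maximal torus $T'$ of $K^{0}$ that contains $\Aut_{G}(X)$. I would prove (2) first and then deduce (1) from it.

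\textbf{Part (2).} Let $N$ be the kernel of $\res\colon K^{0}\to\Aut^{0}(F)$. It is a closed normal subgroup of $K^{0}$. To show $N$ is unipotent, it suffices to show that every semisimple element, equivalently every subtorus $T''\subset N$, is trivial.

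Elements of $K^{0}$ preserve every fiber of $\Phi$. Conjugation by $g\in G$ sends $K^{0}$ to itself, since $K$ is normal in $\Aut^{0}(X)$ and $K^{0}$ is characteristic in $K$. For $k\in N$, consider $g^{-1}kg$ restricted to $F$. It equals the action of $k$ on the fiber $\Phi^{-1}(gP)$, transported to $F$. So $N$ consists of automorphisms acting trivially on $F$, but not necessarily on other fibers, and I cannot simply translate by $G$. Instead I would argue by semisimple elements directly.

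Let $T''\subset N$ be a torus and choose a one-parameter subgroup $\lambda\colon\C^{\times}\to T''$. The fixed locus $X^{\lambda}$ is closed and smooth, since $X$ is smooth and complete. It contains $F$, so it contains the connected component $Z$ of $X^{\lambda}$ through $F$. At a point $x\in F$, $\lambda$ acts on $T_{x}X$. The tangent directions along $F$ are fixed. The normal directions are identified with $T_{eP}(G/P)$ via $d\Phi$, and because $\lambda$ covers the identity on $G/P$, $\lambda$ acts trivially on $T_{x}X/T_{x}F$ as well. Hence $\lambda$ acts unipotently on $T_{x}X$. But $\lambda$ is semisimple, so it acts trivially on $T_{x}X$. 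By linearization of torus actions at fixed points (Luna slice / Białynicki-Birula), $\lambda$ is then trivial on a neighbourhood of $x$. Since $X$ is irreducible, $\lambda$ is trivial on $X$. Therefore $N$ contains no nontrivial torus, and being a linear algebraic group, $N$ is unipotent. The key step here is showing that the action on the normal bundle is trivial because $\Phi\circ\lambda=\Phi$.

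\textbf{Part (1).} By Theorem~\ref{thm:brion}, $\Aut_{G}(X)\subset K$ is a connected torus isomorphic to $S$. Its restriction to $F$ is the toric torus $S\subset\Aut^{0}(F)$, by Theorem~\ref{thm:equiv aut} and Proposition~\ref{proposition:FiberisS-ToricVariety}. This torus is a maximal torus of $\Aut^{0}(F)$ by Theorem~\ref{theorem:liendo}(2), since the zero weight space of $\Lie\Aut^{0}(F)$ is $\Lie(S)$.

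Take a maximal torus $T'$ of $K^{0}$ containing $\Aut_{G}(X)$. Then $\res(T')$ is a torus containing $S$, so by maximality $\res(T')=S=\res(\Aut_{G}(X))$. The kernel $T'\cap N$ is a diagonalizable subgroup of the unipotent group $N$, hence trivial by (2). So $\res|_{T'}$ is injective. Since $\res(\Aut_{G}(X))=\res(T')$ and $\res|_{T'}$ is injective, we get $T'=\Aut_{G}(X)$.

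The main obstacle is the tangent-action argument in (2). There I must make precise that a torus element fixing $F$ pointwise and covering the identity of $G/P$ acts trivially on $T_{x}X$. If I wish to avoid the slice theorem, I would instead use the linearization of $\lambda$ at the fixed point together with the fact that $X^{\lambda}$ is smooth, with tangent space equal to the $\lambda$-invariants of $T_{x}X$.
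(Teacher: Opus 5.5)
Your proof is correct, but it runs in the opposite direction from the paper's.

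\textbf{The paper's route.} The paper proves (1) first. It places $\Aut_G(X)$ in $K^0$ by the Borel fixed point theorem and takes a maximal torus $S'\supset\Aut_G(X)$ of $K^0$. On every fiber $F_x$ the image of $S'$ equals that of $\Aut_G(X)$, by maximality of the toric torus, so $S'$ stabilizes every component of $\partial_G X$. Theorem~\ref{thm:brion} says the identity component of $K\cap\Aut^0(X,\partial_GX)$ is $\Aut_G(X)$, which gives $S'=\Aut_G(X)$. Part (2) then follows by conjugating a torus of the kernel into $\Aut_G(X)$, which acts faithfully on $F$.

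\textbf{Your route.} You prove (2) by a purely local argument. A torus fixing $F$ pointwise and covering $\mathrm{id}_{G/P}$ acts unipotently, hence trivially, on $T_xX$, and therefore trivially on $X$. You then get (1) from (2) together with maximality of $S$ in $\Aut^0(F)$.

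\textbf{What each buys.} You use Theorem~\ref{thm:brion} only for the inclusion $\Aut_G(X)\subset K$, which the Borel fixed point argument also gives. So you avoid the deeper identification of $K\cap\Aut^0(X,\partial_GX)$, and your (2) holds for any fibration with connected fibers, with no horospherical input. The price is the standard fact that the fixed locus of a torus (more generally, of a linearly reductive group) on a smooth variety is smooth with tangent space $(T_xX)^{\lambda}$. The paper's route stays inside the equivariant framework and gets (2) almost for free from (1) via conjugacy of maximal tori.

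\textbf{Two small points.}
\begin{enumerate}
\item ``No nontrivial torus, hence unipotent'' only shows that $N^0$ is unipotent, since $N$ may be disconnected. In (1), however, you use that $T'\cap N$ is trivial. You can either apply your tangent argument directly to the diagonalizable group $T'\cap N$, which is linearly reductive so the same fixed-locus fact applies. Or note that finiteness of $T'\cap N$ already gives $\dim T'=\dim S$, so $T'=\Aut_G(X)$ by connectedness.
\item Smoothness of $X$ is only assumed from Setup~\ref{setup} onward. To be safe, run the tangent argument at $x=eH\in F\cap G/H$, where both $X$ and $\Phi$ are smooth.
\end{enumerate}
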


\begin{proof}
	\begin{enumerate}
		\item Recall that $\Aut_{G}(X)$ is a torus isomorphic to $S=P/H$ by Theorem \ref{thm:equiv aut}.
        By the Borel fixed point theorem, $\Aut_{G}(X)$ fixes a point in $G/P$, which implies that $\Aut_{G}(X)$ acts trivially on $G/P$, since by definition its action commutes with the $G$-action.
		That is, $\Aut_{G}(X)$ is a torus contained in $K^{0}$.

		Let $S'$ be a maximal torus of $K^{0}$, containing $\Aut_{G}(X)$.
		We assert that $S'$ coincides with $\Aut_{G}(X)$.
		Since the $K^{0}$-action preserves each fiber of $\Phi$, for any $x \in G/P$ and $F_{x} \coloneqq \Phi^{-1}(x)$, we have the restriction map
		\[
			\res_{x} \colon S' \rightarrow \Aut^{0}(F_{x}),
		\]
		and its image is a torus containing the image of $\Aut_{G}(X)$.
		In fact, since $F_{x}$ is a complete toric variety with respect to the action of $\Aut_{G}(X)$ (cf. Proposition \ref{proposition:FiberisS-ToricVariety}), the image of $\Aut_{G}(X)$ in $\Aut^{0}(F_{x})$ is a maximal torus, and hence
		\[
			\res_{x}(S') = \res_{x}(\Aut_{G}(X)).
		\]
		That is, the $S'$-action on $F_{x}$ factors through the $\Aut_{G}(X)$-action on $F_{x}$.
		Therefore $S'$ is contained in $\Aut^{0}(X,\, \partial_{G}X)$, since if $D$ is a component of $\partial_{G}X$, then
		\[
			S'.D = \bigcup_{x \in G/P} S'. (D \cap F_{x}) = \bigcup_{x \in G/P} \Aut_{G}(X). (D \cap F_{x}) = D.
		\]
		However, since the connected component of $K^{0} \cap \Aut^{0}(X,\, \partial_{G}X)$ is $\Aut_{G}(X)$ by Theorem \ref{thm:brion}, we conclude that $S' = \Aut_{G}(X)$, i.e., $\Aut_{G}(X)$ is a maximal torus of $K^{0}$.

		\item It suffices to show that the kernel of $\res \colon K^{0} \rightarrow \Aut^{0}(F)$ does not contain a torus of positive dimension as an algebraic subgroup.
		Let $S''$ be a torus contained in the kernel of $\res$.
		By (\ref{item:prop-unip-kernel 1}), there exists $g \in K^{0}$ such that $g S''g^{-1} \subset \Aut_{G}(X)$.
		Note that $gS''g^{-1}$ is still in the kernel of $\res$.
		However, since $F$ is an $\Aut_{G}(X)$-toric variety, the $\Aut_{G}(X)$-action on $F$ is faithful, and thus $gS''g^{-1} = \{id\}$.
		That is, $S'' = \{id\}$, and the statement follows.
	\end{enumerate}
\end{proof}

\section{Main results}\label{section:maintheorem}

Throughout this section, we use the notation in Setup \ref{setup}.

\begin{setup} \label{setup}
    Let $G$ be a connected reductive linear algebraic group, $T$ a maximal torus of $G$, and $B^{\pm}$ Borel subgroups of $G$ containing $T$ that are opposite to each other.
    Let $H$ be a horospherical subgroup of $G$ containing $R^{u}(B^{-})$.
    Define the normalizer $P \coloneqq N_{G}(H)$ and the torus $S \coloneqq P/H$.
    
    Define $X$ to be a smooth complete toroidal horospherical $G$-variety with open $G$-orbit $G/H$, and let
    $\Phi : X \rightarrow G/P$ be the $G$-equivariant morphism extending $G/H \rightarrow G/P$, and $F \coloneqq \Phi^{-1}(eP)$.
    We denote by $\cR_{S}(F)$ the set of Demazure roots of $F$ as an $S$-toric variety.
\end{setup}

The set of $B^{+}$-colors on $G/H$ is denoted by $\cD^{+} = \cD^{B^{+}}$, and the color map with respect to $B^{+}$ is denoted by $\epsilon^{+}\colon \cD^{+} \rightarrow N_{G/H}$.
In the following, we identify $M_{G/H}$ with the image of injection $M_{G/H} \hookrightarrow M_{T}$ induced by the natural quotient $T (\subset P) \twoheadrightarrow S$.
We also identify $M_{T}$ and $M_{B^{+}}$.

First, in analogy with Definition \ref{definition:semisimpleUnipotent}, we introduce a notion of semisimple and unipotent roots for toroidal horospherical varieties:

\begin{definition}\label{definition:horospherical roots}
    Define
    \begin{align*}
    \cR^{+}_{G}(X) & \coloneqq \{m \in \cR_{S}(F) : \langle m,\, \epsilon^{+}(\cD^{+}) \rangle \ge 0\},\\
    \cS^{+}_{G}(X) & \coloneqq \{m \in \cR^{+}_{G}(X) \colon - m \in \cR^{+}_{G}(X) \}, \\
    \cU^{+}_{G}(X) &\coloneqq \cR^{+}_{G}(X) \setminus \cS^{+}_{G}(X).
    \end{align*}
    Elements of $\cR^{+}_{G}(X)$ are called \emph{$B^{+}$-roots} of $X$.
    Elements of $\cS^{+}_{G}(X)$ and $\cU^{+}_{G}(X)$ are said to be \emph{semisimple} and \emph{unipotent}, respectively.
\end{definition}

We provide a criterion for $S$-normalized $\G_{a}$-actions on $F$ to extend to $B^{+}$-normalized $\G_{a}$-actions on $X$ in terms of $\cR^{+}_{G}(X)$:

\begin{theorem} \label{theorem:B-normalized Ga action}
    For $m \in \cR_{S}(F)$, the following are equivalent:
    \begin{enumerate}
        \item\label{item:theorem:B-normalized Ga action} There exists a nontrivial $B^{+}$-normalized $\G_{a}$-action $A^{+}_{m}$ on $X$ with weight $m$, preserving every fiber of $\Phi$.
        \item $m \in \cR^{+}_{G}(X)$.
    \end{enumerate}
    If one of the two conditions holds, then the $\G_{a}$-action $A_{m}^{+}$ satisfying (\ref{item:theorem:B-normalized Ga action}) is unique, and $A^{+}_{m}|_{F}$ is a nontrivial $S$-normalized $\G_{a}$-action on $F$ with weight $m$.
    Furthermore, $A^{+}_{m}$ is $G$-normalized if and only if $\langle m,\, \epsilon^{+}(\cD^{+}) \rangle = 0$.
\end{theorem}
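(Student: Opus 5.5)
The plan is to reduce everything to the open chart $U\coloneqq\Phi^{-1}(B^{+}.(eP))\simeq R^{u}(P^{+})\times F$ and then to decide regularity of a rational vector field along the complement of $U$. Since $\Lie(\Aut^{0}(X))=H^{0}(X,T_{X})$, a $B^{+}$-normalized $\G_a$-action of weight $m$ preserving the fibers of $\Phi$ amounts to a nonzero vertical vector field $\xi\in H^{0}(X,T_{X/(G/P)})$ with the following properties: it is a $B^{+}$-eigenvector of weight $m$, and it is nilpotent, i.e.\ it exponentiates to $\G_a$.

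\textbf{Uniqueness and restriction to $F$.} Let $A$ be such an action. The group $R^{u}(P^{+})\subset B^{+}$ is unipotent, so every character of $B^{+}$ is trivial on it. Hence $A$ commutes with $R^{u}(P^{+})$, and on $U$ we get
\[
A(s,u.x)=u.A(s,x) \qquad (u\in R^{u}(P^{+}),\ x\in F).
\]
Thus $A$ is determined by $A|_{F}$, because $U$ is dense. Next, $T\subset B^{+}\cap P$ acts on $F$ through $T\twoheadrightarrow S$, and $A|_F$ is $T$-normalized of weight $m$. It is therefore $S$-normalized of weight $m$, and it is nontrivial: otherwise the formula above forces $A$ to be trivial on $U$, hence on $X$. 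By Theorem~\ref{theorem:liendo}(\ref{item:liendo2}), the $m$-weight space of $H^{0}(F,T_F)$ is one-dimensional, spanned by the field of $A_m$. So $A|_F=A_m$ up to rescaling the parameter. Rescaling the parameter gives the same $\G_a$-subgroup, so the action is unique in that sense.

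\textbf{Existence and the criterion.} Let $f_m\in(\C(G/H)^{\times})^{(B^{+})}$ be the rational eigenfunction extending $\chi^{m}$ on $S$, given by diagram~(\ref{diagram:weight lattice of G/H}). Let $\eta_{\rho}$ be the global vertical vector field on $X$ generated by the one-parameter subgroup $\lambda_{\rho_m}$ of $\Aut_G(X)\simeq S$ (Theorem~\ref{thm:equiv aut}); it is $G$-invariant. By Theorem~\ref{theorem:liendo}(\ref{item:liendo1}), the Demazure field of $A_m$ on $F$ is $\chi^{m}\eta_{\rho}|_F$, so I set $\xi\coloneqq f_m\eta_\rho$, a rational vector field on $X$. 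On $U$ it is the $R^{u}(P^{+})$-invariant extension of the Demazure field, hence regular there. Moreover it is a $B^{+}$-eigenvector of weight $m$, since $\eta_\rho$ is invariant and $f_m$ has weight $m$.

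The complement of $U$ is the union of the closures of the colors $D_\alpha$, $\alpha\in\Pi^{+}_{P}$. To study them I use the $G$-translates of $U$, which trivialize $\Phi$ near the generic point of each $\overline{D_\alpha}$; there the fiberwise behaviour is again that of a Demazure field times a unit, so the only possible poles of $\xi$ are along the $\overline{D_\alpha}$. The field $\eta_\rho$ is nonzero at the generic point of $\overline{D_\alpha}$, because $\Aut_G(X)$ acts on the fibers with an open orbit meeting $D_\alpha$. Hence the order of $\xi$ along $\overline{D_\alpha}$ is
\[
\nu_{D_\alpha}(f_m)=\langle m,\epsilon^{+}(D_\alpha)\rangle,
\]
computed by Proposition~\ref{prop:color is coroot}. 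So $\xi$ is regular on $X$ exactly when $m\in\cR^{+}_{G}(X)$. When it is, $\xi$ spans a nontrivial weight space of the finite-dimensional algebra $\Lie(\Aut^{0}(X))$. Its $\Aut_G(X)$-weight $m$ is nonzero (as $\langle m,\rho_m\rangle=-1$), so $\xi$ is ad-nilpotent and its exponential gives the required $\G_a$-action. Its restriction to $F$ is $A_m$, and this proves (1)$\Leftrightarrow$(2).

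\textbf{$G$-normalization.} Suppose $\langle m,\epsilon^{+}(\cD^{+})\rangle=0$. Then $f_m$ has neither zeros nor poles on $G/H$, so it is an invertible regular function on $G/H$. Such a function is a $G$-eigenfunction up to scalar, so $\xi$ spans a $G$-stable line and $A^{+}_m$ is $G$-normalized. Conversely, if $A_m^{+}$ is $G$-normalized, its weight $m$ extends to a character of $G$. A character of $G$ pairs to zero with every simple coroot $\check\alpha$, so all the pairings vanish by Proposition~\ref{prop:color is coroot}.

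\textbf{Main obstacle.} The delicate step is the order computation along $\overline{D_\alpha}$. It requires checking that $\eta_\rho$ has no zero there, and that in the $G$-translated trivializations the field is exactly $f_m$ times a fiberwise Demazure-type field. I would verify this by working in the chart $s_\alpha.U$ and comparing the two trivializations via the transition $P\twoheadrightarrow S$.
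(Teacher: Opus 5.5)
Your proof is correct, and it takes a genuinely different route from the paper. The paper deduces the theorem from Lemma~\ref{lemma: key lemma}. There it identifies $\Lie(K^{0})$ with $H^{0}(G/P,\Phi_{*}T_{\Phi})$ and decomposes it via Borel--Weil--Bott as $\Lie(\Aut_{G}(X))\oplus\bigoplus_{m\in\cR^{+}_{G}(X)}V(m)$, translating dominance into $\langle m,\epsilon^{+}(\cD^{+})\rangle\ge 0$ through Proposition~\ref{prop:color is coroot}. Everything is then read off this $G$-module: a fiber-preserving $B^{+}$-normalized $\G_a$-action of weight $m$ is a $B^{+}$-highest weight line of weight $m$, uniqueness is multiplicity-freeness, and $G$-normalization means $\dim V(m)=1$.

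You instead write the root field explicitly as $\xi=f_m\eta_\rho$ and test regularity divisor by divisor. The $G$-stable divisors all meet $U\simeq R^{u}(P^{+})\times F$, where regularity is Demazure's theorem on $F$. Along the colors the order is $\nu_{D_\alpha}(f_m)$, which equals $\langle m,\epsilon^{+}(D_\alpha)\rangle$ by the very definition of $\epsilon^{+}$. Uniqueness comes from $R^{u}(P^{+})$-invariance, and $G$-normalization from Rosenlicht's theorem.

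Your route needs neither Grauert nor Borel--Weil--Bott and gives an explicit formula for $\Lie(U^{+}_{m})$. It also shows transparently that the two conditions defining $\cR^{+}_{G}(X)$ correspond to the two kinds of $B^{+}$-stable divisors, in the spirit of Demazure's original argument. The paper's route yields the full $G$-module structure of $\Lie(K^{0})$ in one stroke, which is what Corollary~\ref{coro:LieAut(X)} and Theorem~\ref{thm:structure of K} need. Your method could recover it, since every $B^{+}$-eigenvector of $H^{0}(X,T_{\Phi})$ is $R^{u}(P^{+})$-invariant and hence determined by its restriction to $F$, but that would be an additional step.

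Two small corrections.

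First, the step you flag as the main obstacle is not one. The generic point of $\overline{D_\alpha}$ lies in $G/H$, where $\xi=f_m\eta_\rho$ by definition and $\eta_\rho$ vanishes nowhere, because $S$ acts freely on $G/H$. Since $T_X$ is locally free, $\eta_\rho$ is part of a local frame there, so the order of $\xi$ along $\overline{D_\alpha}$ is $\nu_{D_\alpha}(f_m)$. No translated charts $s_\alpha.U$ are needed. Moreover, in such charts $\xi$ is $f_m(s_\alpha uH)$ times the fiberwise Demazure field, and that coefficient is not a unit: it vanishes or has a pole along the Schubert divisor, which is exactly the order you are computing.

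Second, ad-nilpotency alone does not let you exponentiate to a $\G_a$, since central semisimple elements are ad-nilpotent. What you need is that $\xi$ is a nilpotent element of $\Lie(\Aut^{0}(X))$. This holds because the semisimple part of $\xi$ is again a weight vector of nonzero weight for the torus $\Aut_{G}(X)$. In a faithful representation its finitely many eigenvalues would then form a $\C^{\times}$-stable set, so they all vanish and the semisimple part is $0$.
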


In particular, since $\langle \cS^{+}_{G}(X),\, \epsilon^{+}(\cD^{+}) \rangle = 0$, the $\G_{a}$-action $A^{+}_{m}$ for a semisimple $B^{+}$-root $m$ is $G$-normalized.

Indeed, Theorem \ref{theorem:B-normalized Ga action} is a consequence of the following lemma:

\begin{lemma} \label{lemma: key lemma}
    For a $B^{+}$-dominant weight $\omega$ of $T$, let $V(\omega)$ be the irreducible $G$-module with $B^{+}$-highest weight $\omega$.
    \begin{enumerate}
        \item\label{item:keylemma1} For $m \in M_{G/H}$, $m$ is $B^{+}$-dominant if and only if $\langle m, \, \epsilon^{+}(\cD^{+}) \rangle \ge 0$.
        In this case, $\langle m, \, \epsilon^{+}(\cD^{+}) \rangle = 0$ if and only if $\dim V(m) = 1$.

        \item\label{item:keylemma2} As $G$-modules,
        \[
            \Lie(K^{0}) = \Lie (\Aut_{G}(X)) \oplus \bigoplus_{m \in \cR^{+}_{G}(X)} V(m).
        \]
        \item\label{item:keylemma3} For $m \in \cR^{+}_{G}(X)$, the $B^{+}$-highest weight line of $V(m)$ is the Lie algebra of a $B^{+}$-root subgroup $U^{+}_{m}$ of $K^{0}$ whose associated $\G_{a}$-action on $X$ is $B^{+}$-normalized with weight $m$.
        Furthermore, the restriction of the action of $U^{+}_{m}$ on $F$ is nontrivial and $S$-normalized with weight $m$.
    \end{enumerate}
\end{lemma}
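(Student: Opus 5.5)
For part (1), I would use Proposition \ref{prop:color is coroot}: the colors $\cD^{+}$ are exactly the $D_\alpha$ for $\alpha\in\Pi^+_P$, and $\langle m,\epsilon^+(D_\alpha)\rangle=\langle m,\check\alpha\rangle$ with $m$ viewed in $M_T$. Since $m\in M_{G/H}$ is a character of $P$ (it factors through $P\to S$), it is orthogonal to the coroots of simple roots of the Levi of $P$. Hence $m$ is $B^+$-dominant if and only if $\langle m,\check\alpha\rangle\ge 0$ for all $\alpha\in\Pi^+_P$, which is the stated condition. In that case $\dim V(m)=1$ if and only if $m$ is orthogonal to all simple coroots, i.e.\ $\langle m,\epsilon^+(\cD^+)\rangle=0$.

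For part (2), I would compute $\Lie(K^0)=H^0(X,T_{X/(G/P)})$, the space of vertical vector fields. This follows because $K$ is the kernel of $\Phi_*$ and $\Lie\Aut^0(X)=H^0(X,T_X)$, using the relative tangent sequence and the surjectivity of $\Phi_*$. Since $X=G\times^P F$, the relative tangent bundle is the homogeneous bundle $G\times^P T_F$, so
\[H^0(X,T_{X/(G/P)})=H^0(G/P,\,G\times^P W)\qquad\text{with } W=H^0(F,T_F)=\Lie\Aut^0(F),\]
the bundle associated to the $P$-module $W$. The $P$-action on $W$ factors through $S$, and by Theorem \ref{theorem:liendo}(2), $W=\Lie(S)\oplus\bigoplus_{m\in\cR_S(F)}\C_m$ is a direct sum of characters. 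Each summand gives a line bundle $L_m=G\times^P\C_m$ on $G/P$ (or a trivial bundle for $\Lie S$). By Borel--Weil, $H^0(G/P,L_m)$ is $V(m)$ (or its dual, depending on sign conventions with $B^-\subset P$) when $m$ is $B^+$-dominant, and $0$ otherwise. The trivial part contributes $\Lie(S)=\Lie\Aut_G(X)$. Combining this with part (1) yields the decomposition over $\cR^+_G(X)$.

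The main obstacle is getting the sign and dual conventions right, so that the relevant weight is $m$ rather than $-m$ and the highest weight is $m$ with respect to $B^+$. I would check this on the open cell $R^u(P^+)\times S$, where sections restrict to $S$-eigen vector fields on $F$ of weight $m$.

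For part (3), the $B^+$-highest weight vector $\xi\in V(m)$ is a $T$-eigenvector annihilated by $\Lie R^u(B^+)$. Restricted to $F$, it gives the root vector field of $A_m$, because evaluation at $eP$ of the highest weight section is nonzero, by the same open-cell computation. Hence $\xi$ is a nilpotent vector field: on each fiber it is conjugate to a Demazure root field, and it is $B^+$-semi-invariant. I would argue that $\xi$ integrates to a $\G_a$-subgroup $U^+_m\subset K^0$ by taking the unipotent part of the algebraic subgroup generated, using that the line $\C\xi$ is $B^+$-stable with weight $m\neq 0$. This line is then normalized by $B^+$ with weight $m$, and its restriction to $F$ is the $S$-normalized action $A_m$, which is nontrivial.
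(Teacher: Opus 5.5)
Your proposal is correct and follows the paper's proof essentially step by step. Part (1) goes via Proposition~\ref{prop:color is coroot} and the vanishing of $m$ on the simple coroots of $P$. Part (2) goes via $\Lie(K^{0})\simeq H^{0}(X,T_{\Phi})\simeq H^{0}(G/P,\Phi_{*}T_{\Phi})$, with fiber $H^{0}(F,T_{F})$ given by Theorem~\ref{theorem:liendo}, and Borel--Weil. Since $B^{-}\subset P$, one indeed gets $H^{0}(G/P,G\times^{P}\C_{m})\simeq V(m)$ with $B^{+}$-highest weight $m$, nonzero iff $m$ is $B^{+}$-dominant, so your sign worry resolves as you hoped. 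Part (3) goes via the unique $B^{+}$-stable line of weight $m$ and the open-cell argument for nontriviality on $F$.

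The only cosmetic difference is in (3). You obtain nilpotency of $\xi$ directly, from its nonzero $T$-weight or from its being fiberwise a Demazure root field. The paper instead first gets a $1$-dimensional subgroup from the uniqueness of the Jordan decomposition, and then rules out $\C^{\times}$ using the restriction to $F$ and Theorem~\ref{theorem:liendo}.
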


\begin{proof}
	\begin{enumerate}
		\item By definition, $m$ is $B^{+}$-dominant if and only if $\langle m,\, \check{\alpha} \rangle \ge 0$ for any simple root $\alpha \in \Pi^{+}$.
        Since $m \in M_{G/H}$, which is the character lattice of $P/H$, the pairing between $m$ and a simple coroot of $P$ is zero.
        That is, $m$ is $B^{+}$-dominant if and only if $\langle m,\, \check{\alpha} \rangle \ge 0$ for any $\alpha \in \Pi^{+}_{P}$.
        By Proposition \ref{prop:color is coroot}, it is equivalent to $\langle m,\, \epsilon^{+}(\cD^{+}) \rangle \ge 0$.
        For the second statement, observe that $\langle m,\, \epsilon^{+}(\cD^{+}) \rangle = 0$ if and only if $\langle m,\, \check{\alpha} \rangle = 0$ for any $\alpha \in \Pi^{+}$, which is equivalent to $\dim V(m) = 1$.

		\item Recall that there are natural Lie algebra isomorphisms $H^{0}(X,\, T_{X}) \simeq \Lie (\Aut^{0}(X))$ and $H^{0}(X,\, \Phi^{*}T_{G/P}) \simeq H^{0}(G/P,\, T_{G/P}) \simeq \Lie (\Aut^{0}(G/P))$.
		Moreover, for the relative tangent sequence
		\[
		0 \rightarrow T_{\Phi} \rightarrow T_{X} \xrightarrow{d \Phi} \Phi^{*} T_{G/P} \rightarrow 0,
		\]
		the Lie algebra isomorphisms fit into the commutative diagram
		\begin{center}
		\begin{tikzcd}
		\Lie(\Aut^{0}(X)) \arrow[r, "d \Phi_{*}"] \arrow[d, "\simeq"] & \Lie(\Aut^{0}(G/P)) \arrow[d, "\simeq"] \\
		H^{0}(X,\, T_{X}) \arrow[r, "d \Phi"] & H^{0}(X,\, \Phi^{*}T_{G/P}),
		\end{tikzcd}
		\end{center}
		and hence by Corollary \ref{eqn:Blanchard} we have a natural isomorphism
		\[
		\Lie (K^{0}) \simeq H^{0}(X,\, T_{\Phi}),
		\]
		which is $G$-equivariant.
        Here, $G$ acts on $K^{0}$ via the conjugation, and the $G$-action on $H^{0}(X,\, T_{\Phi})$ is given by taking differentials of the $G$-action on $X$.

        To compute $H^{0}(X,\, T_{\Phi})$ as a $G$-module, we apply the Borel--Weil--Bott theorem, using the natural isomorphism $H^{0}(X,\, T_{\Phi}) \simeq H^{0}(G/P,\, \Phi_{*} T_{\Phi})$.
		In fact, by the Grauert theorem, $\Phi_{*}T_{\Phi}$ is a $G$-equivariant vector bundle on $G/P$ whose fiber at $e P$ is, thanks to Theorem \ref{theorem:liendo},
		\begin{align*}
		H^{0}(F,\, T_{\Phi}|_{F}) &\simeq H^{0}(F,\, T_{F}) \\
        &\simeq \Lie(S) \oplus \bigoplus_{m \in \cR_{S}(F)} \C_{m}     
		\end{align*}
		as an $S$-module where $\C_{m}$ means the 1-dimensional $S$-module with weight $m$.
		Therefore by the Borel--Weil--Bott theorem and by (\ref{item:keylemma1}), we have $G$-module isomorphisms
		\begin{align*}
		\Lie(K^{0}) &\simeq H^{0}(G/P,\, \Phi_{*} T_{\Phi}) \\
		&\simeq V(0)^{\oplus \dim S} \oplus \bigoplus_{\substack{m \in \cR_{S}(F) \\ \text{$m$ is $B^{+}$-dominant}}} V(m) \\
		& = V(0)^{\oplus \dim S} \oplus \bigoplus_{\substack{m \in \cR_{S}(F) \\ \langle m,\, \epsilon^{+}(\cD^{+}) \rangle \ge 0}} V(m) \\
        & = V(0)^{\oplus \dim S} \oplus \bigoplus_{m \in \cR^{+}_{G}(X)} V(m).
		\end{align*}
		Finally, the trivial factor $V(0)^{\oplus \dim S}$ coincides with $\Lie (\Aut_{G}(X))$, since $\Aut_{G}(X)$ commutes with $G$ and $\Aut_{G}(X) \simeq S$ (see Theorem \ref{thm:equiv aut}).

		\item Let $m \in \cR^{+}_{G}(X)$, i.e., $m$ is a nonzero $B^{+}$-highest weight of $\Lie(K^{0})$.
		By (\ref{item:keylemma2}), there is a unique $B^{+}$-invariant 1-dimensional subspace of $\Lie(K^{0})$ with weight $m$, say $l_{m} \subset \Lie(K^{0})$.
		By the uniqueness of the Jordan decomposition, $l_{m}$ is the Lie algebra of a 1-dimensional algebraic subgroup $U^{+}_{m}$ of $K^{0}$, which is either $\C^{\times}$ or $\G_{a}$.
		Furthermore, since $l_{m}$ is $B^{+}$-invariant, $U^{+}_{m}$ is normalized by $B^{+}$.
		It follows that the action of $U^{+}_{m}$ on $F$ is not trivial, otherwise it acts trivially on $B^{+}.F$, which is open in $X$, a contradiction.
		Moreover, since the action of $U^{+}_{m}$ on $F$ is normalized by $S$ with weight $m$, $U^{+}_{m}$ must be $\G_{a}$ by Theorem \ref{theorem:liendo}.
	\end{enumerate}
\end{proof}

\begin{corollary} \label{coro:LieAut(X)}
    The following hold:
    \begin{enumerate}
        \item\label{item:coro of Key Lemma 1} As $G$-modules,
        \[
        \Lie(\Aut^{0}(X)) = \Lie (\Aut^{0}(G/P)) \oplus \Lie (\Aut_{G}(X)) \oplus \bigoplus_{m \in \cR^+_G(X)} V(m).
        \]
        In particular,  
        \[
	       \dim \Aut^{0}(X) =\dim \Aut^{0}(G/P) + \dim S + |\cS^{+}_{G}(X)| + \sum_{m \in \cU^{+}_{G}(X)} \dim V(m).
        \]

        \item\label{item:coro of Key Lemma 2} $\Aut^{0}(X)$ is generated by $\Aut^{0}(X,\, \partial_{G}X)$ and $U^{+}_{m}$, for $m\in \cR^{+}_{G}(X)$.
    \end{enumerate}
\end{corollary}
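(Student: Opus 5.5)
The plan is to assemble both statements from the short exact sequence of Corollary \ref{eqn:Blanchard}, the computation of $\Lie(K^{0})$ in Lemma \ref{lemma: key lemma} (\ref{item:keylemma2}), and Brion's Theorem \ref{thm:brion}.

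For (\ref{item:coro of Key Lemma 1}), the first step is to differentiate the sequence $1 \to K \to \Aut^{0}(X) \to \Aut^{0}(G/P) \to 1$. This gives an exact sequence of $G$-modules
\[
0 \to \Lie(K^{0}) \to \Lie(\Aut^{0}(X)) \to \Lie(\Aut^{0}(G/P)) \to 0,
\]
where $G$ acts by conjugation through its image in $\Aut^{0}(X)$. Since $G$ is reductive, its finite-dimensional representations are completely reducible, so the sequence splits as $G$-modules. Substituting Lemma \ref{lemma: key lemma} (\ref{item:keylemma2}) for $\Lie(K^{0})$ then gives the stated decomposition. For the dimension formula, I take dimensions and use $\dim \Aut_{G}(X) = \dim S$ from Theorem \ref{thm:equiv aut}. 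It remains to check $\dim V(m) = 1$ for $m \in \cS^{+}_{G}(X)$. If both $m$ and $-m$ lie in $\cR^{+}_{G}(X)$, then $\langle m, \epsilon^{+}(\cD^{+})\rangle \ge 0$ and $\langle -m, \epsilon^{+}(\cD^{+})\rangle \ge 0$, so the pairing vanishes. Lemma \ref{lemma: key lemma} (\ref{item:keylemma1}) then says $\dim V(m) = 1$. Splitting the sum over $\cR^{+}_{G}(X)$ into the parts over $\cS^{+}_{G}(X)$ and $\cU^{+}_{G}(X)$ finishes (\ref{item:coro of Key Lemma 1}).

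For (\ref{item:coro of Key Lemma 2}), let $A \subseteq \Aut^{0}(X)$ be the closed connected subgroup generated by $\Aut^{0}(X,\partial_{G}X)$ and the $U^{+}_{m}$, $m \in \cR^{+}_{G}(X)$. It suffices to show $\Lie(A) = \Lie(\Aut^{0}(X))$. The argument proceeds in three steps.
\begin{enumerate}
\item By Theorem \ref{thm:brion}, $\Phi_{*}(A) \supseteq \Phi_{*}(\Aut^{0}(X,\partial_{G}X)) = \Aut^{0}(G/P)$. Hence $\Phi_{*}|_{A}$ is onto, and $\Lie(A)$ surjects onto $\Lie(\Aut^{0}(G/P))$.
\item It then remains to show $\Lie(K^{0}) \subseteq \Lie(A)$. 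The image of $G$ lies in $\Aut^{0}(X,\partial_{G}X) \subseteq A$, so $\Lie(A) \cap \Lie(K^{0})$ is a $G$-submodule of $\Lie(K^{0})$.
\item This submodule contains $\Lie(\Aut_{G}(X))$, since $\Aut_{G}(X) \subseteq \Aut^{0}(X,\partial_{G}X)$. It also contains the highest weight line $\Lie(U^{+}_{m})$ of each $V(m)$, by Lemma \ref{lemma: key lemma} (\ref{item:keylemma3}). Each $V(m)$ is irreducible, so the $G$-submodule generated by its highest weight line is all of $V(m)$. The decomposition of Lemma \ref{lemma: key lemma} (\ref{item:keylemma2}) is multiplicity-free among the nontrivial summands, so the $V(m)$ really are the isotypic pieces.
\end{enumerate}
Together these give $\Lie(K^{0}) \subseteq \Lie(A)$, and a dimension count yields $\Lie(A) = \Lie(\Aut^{0}(X))$. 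Since $\Aut^{0}(X)$ is connected, $A = \Aut^{0}(X)$.

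The only subtle point I expect is whether a $G$-module summand $V(m)$ with $\dim V(m) = 1$ and $m \neq 0$ could coincide with part of $\Lie(\Aut_{G}(X))$. This cannot happen: $\Aut_{G}(X)$ commutes with $G$, so $G$ acts trivially on $\Lie(\Aut_{G}(X))$, while $m \neq 0$. The submodule spanned by these pieces is therefore the full direct sum. A second point is that $G$ might act on $\Lie$'s only through its image in $\Aut^{0}(X)$, but this does not affect complete reducibility.
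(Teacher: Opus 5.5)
Your proposal is correct and follows the same route as the paper. The paper's proof is a one-line citation: (1) follows from Lemma \ref{lemma: key lemma} with the exact sequence of Corollary \ref{eqn:Blanchard}, and (2) follows from Lemma \ref{lemma: key lemma} and Theorem \ref{thm:brion}. Your write-up correctly supplies the details it leaves implicit: complete reducibility to split the Lie algebra sequence, $\dim V(m)=1$ for semisimple roots, and the Lie algebra dimension count showing the generated subgroup is all of $\Aut^{0}(X)$.
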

\begin{proof}
    (\ref{item:coro of Key Lemma 1}) is a direct consequence of Lemma \ref{lemma: key lemma}.
    (\ref{item:coro of Key Lemma 2}) also follows from Lemma \ref{lemma: key lemma} and Theorem \ref{thm:brion}.
\end{proof}

\subsection{Structure theorem} \label{ssection:structure}

In this section we give a more precise structure theorem for $\Aut^{0}(X)$.

\begin{theorem} \label{thm:structure of K} The following hold:
\begin{enumerate}
    \item\label{Kstructure1} $R^{u}(\Aut^{0}(X)) = R^{u}(K^{0})$, and its Lie algebra is $\bigoplus_{m \in \cU^{+}_{G}(X)} V(m)$ as a $G$-module.
    In particular, $R^{u}(\Aut^{0}(X))$ is generated by $U^{+}_{m}$, for $m \in \cU^{+}_{G}(X)$, and their $G$-conjugates.

    \item\label{Kstructure2} $K^{0}$ contains a Levi subgroup $K^{\text{Levi}}$ containing $\Aut_{G}(X)$ such that
    \[
        \Lie(K^{\text{Levi}}) = \Lie (\Aut_{G}(X)) \oplus \bigoplus_{m \in \cS^{+}_{G}(X)} V(m)
    \]
    as $G$-modules.
    In particular, $K^{\text{Levi}}$ is generated by $\Aut_{G}(X)$ and $U^{+}_{m}$, for $m \in \cS^{+}_{G}(X)$.
\end{enumerate}
\end{theorem}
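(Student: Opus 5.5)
We work entirely inside $K^{0}$, using the $G$-module decomposition of Lemma \ref{lemma: key lemma} (\ref{item:keylemma2}) and the weight grading by the torus $\Aut_{G}(X)\simeq S$ (Theorem \ref{thm:equiv aut}). The central observation is that $\Aut_{G}(X)$ commutes with $G$ and acts on the summand $V(m)$ of $\Lie(K^{0})$ with the single $S$-weight $m$. To see this, restrict to $F$: the $S$-action on the fiber at $eP$ is the $\Aut_{G}(X)$-action, and $V(m)$ is generated as a $G$-module by the line $\Lie(U^{+}_{m})$, on which $\Aut_{G}(X)$ acts by $m$ (Lemma \ref{lemma: key lemma} (\ref{item:keylemma3})). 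Consequently $\Lie(K^{0})$ is $M_{S}$-graded, with degree-zero part $\Lie(\Aut_{G}(X))$ and degree-$m$ part $V(m)$. It follows that $[V(m),V(m')]\subset V(m+m')$ if $m+m'\in\cR^{+}_{G}(X)$, that $[V(m),V(m')]\subset\Lie(\Aut_{G}(X))$ if $m+m'=0$, and that $[V(m),V(m')]=0$ otherwise.

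\emph{Step 1 (Levi part).} Set $\mathfrak l:=\Lie(\Aut_{G}(X))\oplus\bigoplus_{m\in\cS^{+}_{G}(X)}V(m)$, where each $V(m)$ is one-dimensional by Lemma \ref{lemma: key lemma} (\ref{item:keylemma1}) because $\langle m,\epsilon^{+}(\cD^{+})\rangle=0$. I will show that $\mathfrak l$ is a subalgebra. If $m,m'\in\cS^{+}_{G}(X)$ and $m+m'\in\cR^{+}_{G}(X)$, then $m+m'$ is orthogonal to $\epsilon^{+}(\cD^{+})$. We also need $-(m+m')\in\cR_{S}(F)$. For this I use the toric fact, via Theorem \ref{theorem:liendo}, that the semisimple Demazure roots of $F$ span the Lie algebra of the reductive Levi of $\Aut^{0}(F)$. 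Since the bracket is compatible with restriction to $F$, the vector $m+m'$ is a root of that reductive group, so its negative is also a root; hence $m+m'\in\cS^{+}_{G}(X)$.

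The Lie algebra $\mathfrak l$ then has Cartan subalgebra $\Lie(\Aut_{G}(X))$ and a root system closed under negation, with each root space paired nondegenerately to its negative. The nondegeneracy of the pairing is checked on $F$ through the faithful restriction: by Proposition \ref{prop-unip-kernel} (\ref{item:prop-unip-kernel 2}) together with the toric picture, $U^{+}_{m}$ and $U^{+}_{-m}$ generate an $SL_2$ or $PGL_2$. Therefore $\mathfrak l$ is reductive. Taking $K^{\mathrm{Levi}}$ to be the connected subgroup generated by $\Aut_{G}(X)$ and the $U^{+}_{m}$ with $m\in\cS^{+}_{G}(X)$ gives an algebraic subgroup with Lie algebra $\mathfrak l$.

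\emph{Step 2 (unipotent part).} Set $\mathfrak u:=\bigoplus_{m\in\cU^{+}_{G}(X)}V(m)$. I will show that $\mathfrak u$ is an ideal of $\Lie(K^{0})$ consisting of nilpotent elements. It is an ideal because $m\in\cU^{+}_{G}(X)$ and $m'\in\cR^{+}_{G}(X)$ cannot sum to $0$ (else $-m\in\cR^{+}_{G}(X)$). Moreover, if $m+m'\in\cS^{+}_{G}(X)$, then $m=(m+m')-m'$ would have to be orthogonal to $\epsilon^{+}(\cD^{+})$ and, on $F$, would lie in the reductive part. This contradicts the toric result of Theorem \ref{theorem:liendo} (\ref{item:liendo3}) that the unipotent roots span the ideal $\Lie R^{u}(\Aut^{0}(F))$, pulled back along $\res$.

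Nilpotency: choose a generic cocharacter $\lambda$ of $\Aut_{G}(X)$ that is positive on all of $\cU^{+}_{G}(X)$. Such a $\lambda$ exists because on $F$ the unipotent Demazure roots lie in an open half-space, which is a standard toric fact. Then $\mathfrak u$ lies in the positive part of the $\lambda$-grading, so it is nilpotent and is the Lie algebra of a connected unipotent normal subgroup $U$ of $K^{0}$. Since $\Lie(K^{0})=\mathfrak l\oplus\mathfrak u$ with $\mathfrak l$ reductive, we get $U=R^{u}(K^{0})$ and $K^{\mathrm{Levi}}$ is a Levi subgroup. Finally, $R^{u}(K^{0})$ is normal in $\Aut^{0}(X)$ and is unipotent. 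The quotient $\Aut^{0}(X)/R^{u}(K^{0})$ is an extension of the semisimple group $\Aut^{0}(G/P)$ by the reductive group $K^{0}/R^{u}(K^{0})$, hence is reductive. So $R^{u}(\Aut^{0}(X))=R^{u}(K^{0})$. The generation statement follows because the $G$-conjugates of $\Lie(U^{+}_{m})$ span $V(m)$.

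The main obstacle is transferring the semisimple/unipotent dichotomy from $\Aut^{0}(F)$ to $K^{0}$, namely the closure properties of $\cS^{+}_{G}(X)$ and $\cU^{+}_{G}(X)$ under brackets. I will handle this by using $\res$ equivariantly, as above. The positivity condition relative to $\epsilon^{+}$ then only needs to be tracked additively, and since it is additive it causes no difficulty.
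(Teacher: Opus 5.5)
Your route differs from the paper's. You grade $\Lie(K^{0})$ by the weights of $\Aut_{G}(X)$ and check the subalgebra and ideal properties by weight bookkeeping. The paper instead sets $R=\ker\bigl(K^{0}\to\res(K^{0})/R^{u}(\res(K^{0}))\bigr)$ and reads off $\Lie(R)=\bigoplus_{m\in\cU^{+}_{G}(X)}V(m)$. It then gets unipotence of $R^{0}$ from the surjection $R^{0}\to R^{u}(\res(K^{0}))$, whose kernel lies in $\ker(\res)$ and is unipotent by Proposition~\ref{prop-unip-kernel}.

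Your grading and bracket arguments are sound in substance, with two small corrections:
\begin{itemize}
\item When a bracket $[V(m),V(m')]$ is nonzero, what you use is that $d\res$ is injective on the one-dimensional $V(m+m')$, which is Lemma~\ref{lemma: key lemma}(3). It is not faithfulness of $\res$, which fails on $K^{0}$.
\item Reductivity of $K^{\text{Levi}}$ as a group, not just of $\mathfrak l$, needs that the centre of $\mathfrak l$ lies in the toral $\Lie(\Aut_{G}(X))$. Your root-space argument does give this.
\end{itemize}

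The genuine gap is the existence of the cocharacter $\lambda$ positive on $\cU^{+}_{G}(X)$. You derive it from the claim that unipotent Demazure roots of $F$ lie in an open half-space. But in general $\cU^{+}_{G}(X)\not\subset\cU(\Sigma)$. Any $m\in\cS(\Sigma)\cap\cR^{+}_{G}(X)$ with $\langle m,\epsilon^{+}(D)\rangle>0$ for some colour $D$ belongs to $\cU^{+}_{G}(X)$, because $-m\in\cR_{S}(F)$ fails the colour condition. For $X=\P_{Y}(\cO_{Y}\oplus L)$ with $L$ nef and nontrivial (Theorem~\ref{theorem:application}), $F=\P^{1}$ has $\cU(\Sigma)=\emptyset$ while $\cU^{+}_{G}(X)\neq\emptyset$. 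These are exactly the roots responsible for non-reductivity, so the reason you give misses the essential case.

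The step can be repaired as follows.
\begin{itemize}
\item Put $\lambda_{\epsilon}=\sum_{D\in\cD^{+}}\epsilon^{+}(D)$. Every $m\in\cU^{+}_{G}(X)$ has $\langle m,\lambda_{\epsilon}\rangle\ge 0$, with strict inequality when $m\in\cS(\Sigma)$.
\item Hence $\lambda=N\lambda_{\epsilon}+\lambda_{0}$ works for $N\gg 0$, provided $\lambda_{0}$ is positive on $\cU(\Sigma)$.
\item The existence of $\lambda_{0}$ is not formal: a torus can normalise a unipotent group with weights $\pm 1$. One proof takes a rational linear form $\ell$ positive on the pointed effective cone of $F$ and sets $\lambda_{0}=-\sum_{\rho}\ell([D_{\rho}])^{2}\rho$. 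With $a_{\rho'}=\langle m,\rho'\rangle$ one gets $\langle m,\lambda_{0}\rangle=\bigl(\sum_{\rho'\neq\rho_{m}}a_{\rho'}\ell([D_{\rho'}])\bigr)^{2}-\sum_{\rho'\neq\rho_{m}}a_{\rho'}\ell([D_{\rho'}])^{2}$. This is positive unless a single $a_{\rho'}$ equals $1$ and the rest vanish, which is exactly when $-m$ is also a root.
\end{itemize}
Alternatively, drop the cocharacter and obtain unipotence of the subgroup with Lie algebra $\mathfrak u$ as the paper does, via Proposition~\ref{prop-unip-kernel}.
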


\begin{proof}
	First of all, the equality $R^{u}(\Aut^{0}(X)) = R^{u}(K^{0})$ is the consequence of Corollary \ref{eqn:Blanchard} and the reductivity of $\Aut^{0}(G/P)$.
	
	To complete the proof, we show the remaining statements (\ref{Kstructure1}) and (\ref{Kstructure2}) at the same time.
	Consider the restriction map $\res\colon K^{0} \rightarrow \Aut^{0}(F)$.
	By Lemma \ref{lemma: key lemma}, the image $\res(K^{0})$ is generated by $S$ and the $S$-root subgroups of $\Aut^{0}(F)$ associated to $m \in \cR^{+}_{G}(X)$.
	In particular, $S$ and the $S$-root subgroups associated to $m \in \cS^{+}_{G}(X)$ generate a Levi subgroup $L$ of $\res(K^{0})$, while $R^{u}(\res(K^{0}))$ is generated by the $S$-root subgroups associated to $m \in \cU^{+}_{G}(X)$ (cf. Theorem~\ref{theorem:liendo}(\ref{item:liendo3})).

	As the first step, we claim that $K^{\text{Levi}}$ in (\ref{Kstructure2}) is well defined.
	More precisely, we show that in $\Lie(K^{0})$, the subspace $\Lie(\Aut_{G}(X)) \oplus \bigoplus_{m \in \cS^{+}_{G}(X)} V(m)$ is a reductive subalgebra, and define $K^{\text{Levi}}$ as the associated connected subgroup.
	To this end, since $\dim V(m) = 1$ for $m \in \cS^{+}_{m}$, it is straightforward to see that the direct sum is a Lie subalgebra of $\Lie(K^{0})$.
	Moreover, it is an algebraic subalgebra, since $\Aut_{G}(X)$ and $U^{+}_{m}$ are algebraic subgroups of $K^{0}$ (see Lemma \ref{lemma: key lemma}).
	The reductivity follows from the observation that the Lie algebra morphism
	\[
		d(\res)\colon \Lie(K^{0}) \rightarrow \Lie(\Aut^{0}(F))
	\]
	induces an isomorphism
	\[
		\Lie(\Aut_{G}(X)) \oplus \bigoplus_{m \in \cS^{+}_{G}(X)} V(m) \rightarrow \Lie(L).
	\]
	Indeed, the morphism is surjective and its kernel is contained in $\Lie(\Aut_{G}(X))$.
	Since $\Lie(\Aut_{G}(X))$ is isomorphically sent to $\Lie(S)$ by Theorem \ref{thm:equiv aut}, it follows that the morphism is an isomorphism.
	Therefore $K^{\text{Levi}}$, the associated subgroup to $\Lie(\Aut_{G}(X)) \oplus \bigoplus_{m \in \cS^{+}_{G}(X)} V(m)$, is a connected reductive subgroup of $K^{0}$.

	Next, we prove that $K^{\text{Levi}}$ is a Levi subgroup, and compute $\Lie(R^{u}(K^{0}))$.
	Consider the surjective morphism
	\[
		K^{0} \xrightarrow{\res} \res(K^{0}) \twoheadrightarrow \res(K^{0}) / R^{u}(\res(K^{0})) \simeq L
	\]
	and denote by $R$ its kernel.
	Since $U^{+}_{m}$ for $m \in \cU^{+}_{G}(X)$ is contained in $R$, we have
	\[
		\Lie(R) = \bigoplus_{m \in \cU^{+}_{G}(X)}V(m).
	\]
	Moreover, the identity component $R^{0}$ is a unipotent subgroup.
	Indeed, since $\Lie(R^{u}(\res(K^{0}))) = \bigoplus_{m \in \cU^{+}_{G}(X)} \C_{m}$, $\res \colon K^{0} \rightarrow \res(K^{0})$ induces a surjection $R^{0} \rightarrow R^{u}(\res(K^{0}))$ whose kernel is unipotent by Proposition \ref{prop-unip-kernel}.
	Now since $R^{0}$ is a connected normal unipotent subgroup of $K^{0}$ such that $K^{0}/R^{0}$ is reductive, we have $R^{0} = R^{u}(K^{0})$, proving (\ref{Kstructure1}).
	Furthermore, since the surjection $K^{0} \rightarrow L$ induces an isomorphism $\Lie(K^{\text{Levi}}) \rightarrow \Lie(L)$, it follows that $K^{\text{Levi}}$ is indeed a Levi subgroup, proving (\ref{Kstructure2}).
\end{proof}

\begin{corollary} \label{coro:reductive}
The following hold:
\begin{enumerate}
        \item\label{item:coro:reductive1}
        $\Aut^{0}(X)$ is reductive if and only if $\cU^{+}_{G}(X) = \emptyset$, i.e. every $B^{+}$-root of $X$ is semisimple.

        \item\label{item:coro:reductive2} The algebraic subgroup of $\Aut^0(X)$ generated by $G$, $\Aut_{G}(X)$ and $U^{+}_{m}$ for $m \in \cS^{+}_{G}(X)$ is connected and reductive, and it is a Levi subgroup of $\Aut^{0}(X)$ if the natural morphism $G \rightarrow \Aut^{0}(G/P)$ is surjective (see Remark \ref{rmk:G->Aut often surjects}).
    \end{enumerate}
\end{corollary}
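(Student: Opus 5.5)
For (\ref{item:coro:reductive1}), I would use Theorem~\ref{thm:structure of K}(\ref{Kstructure1}): $R^{u}(\Aut^{0}(X))$ has Lie algebra $\bigoplus_{m \in \cU^{+}_{G}(X)} V(m)$. Since each $V(m)$ is nonzero, this Lie algebra vanishes exactly when $\cU^{+}_{G}(X)=\emptyset$. A connected unipotent group is trivial if and only if its Lie algebra is zero, and reductivity of the connected linear group $\Aut^{0}(X)$ means triviality of its unipotent radical. This gives the equivalence.

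For (\ref{item:coro:reductive2}), let $L'$ be the subgroup generated by $G$ (through its image in $\Aut^{0}(X)$) and $K^{\text{Levi}}$. By Theorem~\ref{thm:structure of K}(\ref{Kstructure2}), $K^{\text{Levi}}$ is generated by $\Aut_{G}(X)$ and $U^{+}_{m}$ for $m\in\cS^{+}_{G}(X)$, so $L'$ is the group in the statement. It is connected, being generated by connected subgroups. To see that $L'$ is reductive, I first note that $G$ normalizes $K^{\text{Levi}}$. Indeed, $\Aut_{G}(X)$ commutes with $G$. For $m \in \cS^{+}_{G}(X)$, Lemma~\ref{lemma: key lemma}(\ref{item:keylemma1}) gives $\langle m,\epsilon^{+}(\cD^{+})\rangle=0$, so $\dim V(m)=1$ and $\Lie(U^{+}_{m})$ is a $G$-stable line. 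Hence $U^{+}_{m}$ is $G$-normalized (as in Theorem~\ref{theorem:B-normalized Ga action}), and $\Lie(K^{\text{Levi}})$ is $G$-stable. Therefore $L' = G\cdot K^{\text{Levi}}$, a quotient of the semidirect product $K^{\text{Levi}}\rtimes G$, which is a product of reductive groups. A quotient of a connected reductive group is reductive, so $L'$ is reductive.

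For the Levi statement, assume $G\to\Aut^{0}(G/P)$ is surjective. From the exact sequence of Corollary~\ref{eqn:Blanchard}, $\Phi_{*}(L')=\Aut^{0}(G/P)$ and $L'\cap K \supseteq K^{\text{Levi}}$. Comparing Lie algebras via Corollary~\ref{coro:LieAut(X)}(\ref{item:coro of Key Lemma 1}) and Theorem~\ref{thm:structure of K}, we get
\[
\dim L' \ge \dim \Aut^{0}(G/P) + \dim K^{\text{Levi}} = \dim\Aut^{0}(X) - \dim R^{u}(\Aut^{0}(X)).
\]
On the other hand, a reductive subgroup meets $R^{u}(\Aut^{0}(X))$ in a finite group, since that intersection is a normal unipotent subgroup of $L'$. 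So $\dim L' \le \dim\Aut^{0}(X)-\dim R^{u}$. Hence equality holds, and $L'\cdot R^{u}(\Aut^{0}(X))$ is a closed connected subgroup of full dimension, so it equals $\Aut^{0}(X)$. The intersection $L' \cap R^{u}(\Aut^{0}(X))$ is a finite unipotent group, hence trivial in characteristic zero. This makes $L'$ a Levi subgroup.

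The main obstacle is the dimension lower bound, namely checking that $\Lie(L')\cap \Lie(K) \supseteq \Lie(K^{\text{Levi}})$ and $d\Phi_{*}(\Lie L')=\Lie\Aut^{0}(G/P)$. These should follow from exactness of $\Lie$ applied to Corollary~\ref{eqn:Blanchard}.
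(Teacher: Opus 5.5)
Your proposal is correct and follows essentially the paper's route: part (1) is read off from Theorem~\ref{thm:structure of K}(\ref{Kstructure1}). For part (2) you use that $G$ normalizes $K^{\text{Levi}}$, obtain reductivity as an extension of reductive groups (you via a quotient of $K^{\text{Levi}}\rtimes G$, the paper via the Lie algebra exact sequence with kernel $\Lie(K^{\text{Levi}})$ and quotient the Lie algebra of the image of $G$ in $\Aut^{0}(G/P)$), and then show that this group together with $R^{u}(\Aut^{0}(X))$ fills out $\Aut^{0}(X)$. The step you flag as the main obstacle is actually immediate at the group level, since $K^{\text{Levi}}\subseteq L'\cap K$ by construction and $\Phi_{*}(L')$ contains the image of $G$, which is all of $\Aut^{0}(G/P)$ by hypothesis.
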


\begin{proof}
    (\ref{item:coro:reductive1}) follows from Theorem \ref{thm:structure of K}(\ref{Kstructure1}). 
    For (\ref{item:coro:reductive2}), let $K^{\text{Levi}}$ be the Levi subgroup of $K^{0}$ obtained in Theorem \ref{thm:structure of K}.
    Since $K^{\text{Levi}}$ is normalized by $G$, $G$ and $K^{\text{Levi}}$ generate a connected closed subgroup $L$ of $\Aut^{0}(X)$ such that $\Lie(L)$ is the (vector space) sum of the image of $\Lie(G) \rightarrow \Lie(\Aut^{0}(X))$ and
    \[
	\Lie(K^{\text{Levi}}) = \Lie(\Aut_{G}(X)) \oplus \bigoplus_{m \in \cS^{+}_{G}(X)} V(m).
    \]
    Moreover, since we have a short exact sequence of Lie algebras
    \[
	0 \rightarrow \Lie(K^{\text{Levi}}) \rightarrow \Lie(L) \xrightarrow{d\Phi_{*}|_{L}} \Lie (\text{image of }G \rightarrow \Aut^{0}(G/P)) \rightarrow 0,
    \]
    we see that $L$ is reductive.
    Finally, if $G \rightarrow \Aut^{0}(G/P)$ is surjective, then $L$ and $R^{u}(\Aut^{0}(X))$ generate $\Aut^{0}(X)$ by Theorem \ref{thm:structure of K} and Corollary \ref{eqn:Blanchard}.
This implies that $L$ is a Levi subgroup.
\end{proof}

\begin{remark} \label{rmk:G->Aut often surjects}
    Surjectivity of $G \rightarrow \Aut^{0}(G/P)$ is not a restrictive condition, due to the following observations:
    \begin{enumerate}
        \item $G \rightarrow \Aut^{0}(G/P)$ is surjective unless $G/P$ contains one of the exceptions in Theorem \ref{thm:Aut G/P} as an irreducible factor.
        For example, if the Dynkin diagram of the semi-simple part of $G$ is simply laced, then $G \rightarrow \Aut^{0}(G/P)$ is surjective.
        
        \item Even in the case where $G/P$ contains some exceptional irreducible factor, we may assume that $G \rightarrow \Aut^{0}(G/P)$ is surjective, by replacing $G$ by $\Aut^{0}(X,\, \partial_{G}X)$, as shown in the following proposition.
    \end{enumerate}
\end{remark}

\begin{proposition}[See also {\cite[Theorem~4.4.1]{brionWonderful}, \cite[\S4]{PezziniReductive}}] \label{prop:rep-by-Aut-b}
Define $\mathbf{G} \coloneqq \Aut^{0}(X,\, \partial_{G} X)$.
Then $X$ is a toroidal horospherical variety with respect to the $\mathbf{G}$-action, and its associated rational homogeneous space is $G/P$.
More precisely, for $\mathbf{H} \coloneqq \Stab_{\mathbf{G}}(eH)$ and $\mathbf{P} \coloneqq N_{\mathbf{G}}(\mathbf{H})$, $\mathbf{G}/\mathbf{P}$ is homogeneous under the natural $G$-action, and $\Stab_{G}(e\mathbf{P})=P$.
\end{proposition}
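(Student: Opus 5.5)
We will treat $\mathbf{G}=\Aut^{0}(X,\,\partial_{G}X)$ as a connected reductive group containing the images of $G$ and of $\Aut_{G}(X)$; it is reductive by Theorem \ref{thm:brion}. The aim is to show that $\mathbf{G}$ acts on $X$ with open orbit $\mathbf{G}.(eH)=G/H$ and horospherical stabilizer $\mathbf{H}$, that $X$ is toroidal for $\mathbf{G}$, and that the base $\mathbf{G}/\mathbf{P}$ is identified with $G/P$.

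\textbf{Step 1: the open orbit.} Every element of $\mathbf{G}$ stabilizes each component of $\partial_{G}X$, so it preserves the complement $X\setminus\partial_{G}X=G/H$. Hence $\mathbf{G}.(eH)=G/H$ and $G/H\simeq\mathbf{G}/\mathbf{H}$.

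\textbf{Step 2: $\mathbf{H}$ is horospherical.} Pick a Borel subgroup $\mathbf{B}^{-}$ of $\mathbf{G}$ containing the image of $B^{-}$; we must show $\mathbf{H}$ contains a maximal unipotent subgroup of $\mathbf{G}$. We would use the exact sequence $\mathbf{G}\to\Aut^{0}(G/P)$ of Theorem \ref{thm:brion}, whose kernel has identity component the central torus $\Aut_{G}(X)$. Since $G/P$ is homogeneous under $\mathbf{G}$ through this map, $\mathbf{Q}\coloneqq\Stab_{\mathbf{G}}(eP)$ is parabolic. We would take $\mathbf{B}^{-}\subset\mathbf{Q}$, so that $R^{u}(\mathbf{B}^{-})$ maps into $R^{u}$ of a Borel subgroup of $\Aut^{0}(G/P)$ fixing $eP$.

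The fiber $\varphi^{-1}(eP)=S$ is a single $\mathbf{Q}$-orbit. The induced action of $\mathbf{Q}$ on it commutes with the faithful simply transitive action of $\Aut_{G}(X)\simeq S$ coming from the centre, so $\mathbf{Q}$ acts through an abelian group of translations of $S$. A unipotent group acting on a torus by translations acts through a unipotent subgroup of a torus, hence trivially. Therefore $R^{u}(\mathbf{B}^{-})\subset R^{u}(\mathbf{Q})\subset\mathbf{H}$, where we use that the derived subgroup and the unipotent radical act trivially. So $\mathbf{H}$ is horospherical.

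\textbf{Step 3: identifying $\mathbf{P}$.} By Proposition \ref{prop:normalizer-parabolic} applied to $\mathbf{G}$, the group $\mathbf{P}=N_{\mathbf{G}}(\mathbf{H})$ is the parabolic with $\mathbf{P}/\mathbf{H}$ a torus. We then argue that $\mathbf{P}=\mathbf{Q}$:
\begin{itemize}
\item $\mathbf{Q}$ normalizes $\mathbf{H}$, because $\mathbf{Q}$ acts on the fiber $S$ through the commutative torus, and $\mathbf{H}$ is the kernel of that action.
\item Conversely, $\mathbf{P}$ maps $\mathbf{G}/\mathbf{H}\to\mathbf{G}/\mathbf{P}$ with torus fibers. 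Its fibers are the orbits of the central torus $\Aut_{G}(X)$, which are exactly the fibers of $\varphi$, so $\mathbf{P}$ stabilizes $eP$.
\end{itemize}
It follows that $\mathbf{G}/\mathbf{P}=\mathbf{G}/\mathbf{Q}\simeq G/P$ as $G$-varieties. This gives homogeneity under $G$ and $\Stab_{G}(e\mathbf{P})=P$.

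\textbf{Step 4: toroidality.} A $\mathbf{B}$-color is the preimage of a Schubert divisor of $\mathbf{G}/\mathbf{P}=G/P$ for a Borel subgroup $\mathbf{B}\supset B$. Its closure in $X$ is $\Phi^{-1}$ of that divisor, since $\Phi$ is a bundle and $X$ is toroidal for $G$. It cannot contain a $\mathbf{G}$-orbit, because every orbit surjects onto $G/P$ via $\Phi$.

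The main obstacle is Step 2, namely making rigorous that $\mathbf{Q}$ acts on the fiber through a torus. Our first attempt would deduce this directly from the centrality of $\Aut_{G}(X)$ in $\mathbf{G}$ (Theorem \ref{thm:brion}) together with the simple transitivity of $S$ on the fiber.
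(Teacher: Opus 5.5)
Your argument is essentially sound, but after Step 1 it takes a different route from the paper.

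\textbf{How the paper argues.} It gets horosphericality by letting the unipotent radical $\mathbf{U}$ of a Borel subgroup of $\mathbf{G}$ fix a point $p\in G/P$ (Borel fixed point theorem), and then using that a $\G_a$-action on the torus $\varphi^{-1}(p)$ is trivial. It obtains toroidality by citing \cite[Theorem~4.4.1(2)]{brionWonderful}. It gets $G$-homogeneity of $\mathbf{G}/\mathbf{P}$ from $O=\mathbf{G}.(eH)=G.(eH)$. It proves $\alpha^{-1}(\mathbf{P})=P$ by pulling back the characters of $\mathbf{P}$ that cut out $\mathbf{H}$ and invoking the uniqueness in Proposition~\ref{prop:normalizer-parabolic}, without ever comparing $\mathbf{P}$ with $\Stab_{\mathbf{G}}(eP)$.

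\textbf{How you argue.} You exploit the centrality of $\Aut_G(X)$ in $\mathbf{G}$ (Theorem~\ref{thm:brion}). An automorphism of the fiber commuting with the simply transitive action of the abelian group $\Aut_G(X)$ is a translation, so you get a homomorphism $\mathbf{Q}\to S$ with kernel $\mathbf{H}$. This gives horosphericality and $\mathbf{Q}\subseteq\mathbf{P}$ at once, so Step 2 is not really an obstacle: it is rigorous as you sketched it. Once $\mathbf{P}=\mathbf{Q}$ is established, you get the stronger statement that $\psi$ is literally $\varphi$. Toroidality then follows by hand, without Brion's theorem on colors.

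\textbf{Two points need repair.} First, in Step 2 the inclusion $R^{u}(\mathbf{B}^{-})\subset R^{u}(\mathbf{Q})$ is backwards (take $\mathbf{Q}=\mathbf{G}$). What your argument actually gives, and all you need, is that every unipotent element of $\mathbf{Q}$ maps trivially to the torus $S$, hence lies in $\mathbf{H}$.

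Second, and more seriously, the converse in Step 3 is asserted rather than proved. Centrality only gives $\Aut_G(X)|_O\subseteq\Aut_{\mathbf{G}}(O)\simeq\mathbf{P}/\mathbf{H}$. That says fibers of $\varphi$ lie in fibers of $\psi$, which is again just $\mathbf{Q}\subseteq\mathbf{P}$. For $\mathbf{P}\subseteq\mathbf{Q}$ you need the opposite inclusion. Right multiplication by $\mathbf{P}/\mathbf{H}$ consists of $\mathbf{G}$-equivariant, hence $G$-equivariant, automorphisms of $O=G/H$. So it lies in $\Aut_G(G/H)\simeq P/H$ (Theorem~\ref{thm:equiv aut}), whose orbits are the fibers of $\varphi$. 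Thus $\mathbf{P}.(eH)\subseteq\varphi^{-1}(eP)$. Alternatively, $\mathbf{P}/\mathbf{Q}\simeq(\mathbf{P}/\mathbf{H})/(\mathbf{Q}/\mathbf{H})$ is a torus and is complete because $\mathbf{Q}$ is parabolic in $\mathbf{P}$, so it is a point. With either fix the proof is complete.
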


\begin{proof}
To ease the notation, denote by $O$ the open $G$-orbit $G/H$ in $X$, i.e., $O = G.(eH)$.
By definition, $\mathbf{G}$ preserves all $G$-orbits in $X$, and thus $O = \mathbf{G}.(eH) \simeq \mathbf{G}/\mathbf{H}$.

First we show that $X$ is $\mathbf{G}$-horospherical.
Then it would imply that $X$ is a toroidal horospherical $\mathbf{G}$-variety by \cite[Theorem~4.4.1(2)]{brionWonderful}.
To show the $\mathbf{G}$-horosphericality, we need to show that for the unipotent radical $\mathbf{U}$ of a Borel subgroup of $\mathbf{G}$, $\mathbf{U}$ has a fixed point in the open $\mathbf{G}$-orbit $O$.
Recall that for the $G$-equivariant morphism $\Phi : X \rightarrow G/P$, the $\mathbf{G}$-action on $X$ descends to a $\mathbf{G}$-action on $G/P$ by the Blanchard lemma, and thus $\Phi$ can be regarded as a $\mathbf{G}$-equivariant morphism.
In particular, its restriction $\varphi : O \rightarrow G/P$ is also $\mathbf{G}$-equivariant.
By the Borel fixed point theorem, $\mathbf{U}$ has a fixed point $p \in G/P$, and so acts on $\varphi^{-1}(p)$.
However, since $\varphi^{-1}(p)$ is isomorphic to the torus $S$, and since any algebraic $\G_{a}$-action on a torus is trivial, we see that $\mathbf{U}$ acts trivially on $\varphi^{-1}(p)$.
In particular, $\mathbf{U}$ has a fixed point in $\varphi^{-1}(p) \subset O$, proving that $O$ is a horospherical $\mathbf{G}$-variety.

Next, we show that $\mathbf{G}/\mathbf{P}$ is $G$-homogeneous.
Denote by $\psi: O \simeq\mathbf{G}/\mathbf{H} \rightarrow \mathbf{G}/\mathbf{P}$ the natural projection.
Regarding $\mathbf{G}/\mathbf{P}$ as a $G$-variety via the natural morphism $G \rightarrow \mathbf{G}$, $\psi$ is also $G$-equivariant.
Since $O = \mathbf{G}.(eH) = G.(eH)$ in $X$, we have
\[
	\mathbf{G}/\mathbf{P} = \psi(\mathbf{G}.(eH)) = \psi(G.(eH)) = G.\psi(eH).
\]

It remains to show that the parabolic subgroup $P' \coloneqq \Stab_{G}(e\mathbf{P})$, that is, the preimage of $\mathbf{P}$ under the map $\alpha: G\rightarrow \mathbf{G}$, coincides with $P$.
Recall that there are characters $\chi_{i} : \mathbf{P} \rightarrow \C^{\times}$ for $i \in I$ such that $\mathbf{H} = \bigcap_{i \in I} \ker \chi_{i}$ (cf. Proposition~\ref{prop:normalizer-parabolic}).
Since $H = \alpha^{-1}(\mathbf{H}) \subset P'$, we see that $H$ is the intersection of the kernels of the characters
\[
	P' \xrightarrow{\alpha} \mathbf{P} \xrightarrow{\chi_{i}} \C^{\times}.
\]
By Proposition~\ref{prop:normalizer-parabolic}, we conclude that $P' = N_{G}(H) = P$.
\end{proof}

In \cite[Theorem~4.4.1]{brionWonderful}, it is moreover shown that the colors of $X$ as a $\mathbf{G}$-variety are the same as those of $X$ as a $G$-variety.
This yields the following description of a Levi subgroup of $\Aut^{0}(X)$ without any additional condition.

\begin{corollary}\label{corollary:LeviSubgroup}
	A Levi subgroup of $\Aut^{0}(X)$ is generated by $\Aut^{0}(X,\, \partial_{G}X)$ and $U^{+}_{m}$ for $m \in \cS^{+}_{G}(X)$.
\end{corollary}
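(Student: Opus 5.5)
The plan is to reduce to Corollary \ref{coro:reductive}(\ref{item:coro:reductive2}) by replacing $G$ with $\mathbf{G} \coloneqq \Aut^{0}(X,\, \partial_{G}X)$ and verifying that nothing in the combinatorial data changes.

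By Proposition \ref{prop:rep-by-Aut-b}, $X$ is a toroidal horospherical $\mathbf{G}$-variety whose associated rational homogeneous space is $\mathbf{G}/\mathbf{P} = G/P$. By Theorem \ref{thm:brion}, $\Phi_{*}$ maps $\mathbf{G}$ onto $\Aut^{0}(G/P)$, so the surjectivity hypothesis of Corollary \ref{coro:reductive}(\ref{item:coro:reductive2}) holds for $\mathbf{G}$. Moreover, $\Aut_{\mathbf{G}}(X)$ is the identity component of the center of $\mathbf{G}$, hence is contained in $\mathbf{G}$. Applying that corollary to the $\mathbf{G}$-variety $X$ therefore gives a Levi subgroup generated by $\mathbf{G}$ together with the root subgroups $\mathbf{U}^{+}_{\mathbf{m}}$ for $\mathbf{m} \in \cS^{+}_{\mathbf{G}}(X)$, all taken with respect to a Borel subgroup $\mathbf{B}^{+}$ of $\mathbf{G}$.

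It remains to identify these data with the ones for $G$. Choose $\mathbf{B}^{+} \supset B^{+}$; this is possible since the image of $B^{+}$ is a connected solvable subgroup of $\mathbf{G}$. The key steps are as follows.
\begin{enumerate}
\item The fibre $F=\Phi^{-1}(eP)$ and its torus are the same for both actions. The torus $\mathbf{S}=\mathbf{P}/\mathbf{H}$ acting on $F$ has image in $\Aut^{0}(F)$ equal to that of $S$. Indeed, both are maximal tori of $\Aut^{0}(F)$ acting with the same open orbit, and the image of $S$ is contained in that of $\mathbf{S}$. Hence $\cR_{\mathbf{S}}(F)=\cR_{S}(F)$ under the identification $M_{\mathbf{S}}\simeq M_{S}$.
\item The colors coincide. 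By \cite[Theorem~4.4.1]{brionWonderful}, the $\mathbf{B}^{+}$-colors are the $B^{+}$-colors. A $\mathbf{B}^{+}$-eigenfunction is also a $B^{+}$-eigenfunction, and both restrict to the same $S$-eigenfunction on $F$ by diagram \eqref{diagram:weight lattice of G/H}. It follows that the color maps $\epsilon^{+}$ agree. Consequently $\cR^{+}_{\mathbf{G}}(X)=\cR^{+}_{G}(X)$ and $\cS^{+}_{\mathbf{G}}(X)=\cS^{+}_{G}(X)$.
\item The root subgroups coincide. For each such $m$, the subgroup $\mathbf{U}^{+}_{m}$ is $\mathbf{B}^{+}$-normalized with weight $m$, hence in particular $B^{+}$-normalized. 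It preserves the fibres of $\Phi$ because it lies in the kernel $K$. By the uniqueness part of Theorem \ref{theorem:B-normalized Ga action}, $\mathbf{U}^{+}_{m}=U^{+}_{m}$.
\end{enumerate}

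Combining these three steps gives the statement.

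I expect the main obstacle to be step (2): checking rigorously that the weight lattices and color maps match under the inclusion $G\to\mathbf{G}$. This requires knowing that restricting characters from $\mathbf{P}$ to $P$ induces the identification $M_{\mathbf{S}}=M_{S}$, which should follow from $\mathbf{H}\cap\alpha(G)$ pulling back to $H$, as in the proof of Proposition \ref{prop:rep-by-Aut-b}.
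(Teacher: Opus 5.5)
Your proposal is correct and follows essentially the same route as the paper: replace $G$ by $\mathbf{G}=\Aut^{0}(X,\,\partial_{G}X)$, use Brion's equality of colors to identify the lattices, the color maps and hence $\cS^{+}_{\mathbf{G}}(X)=\cS^{+}_{G}(X)$, identify the root subgroups through the uniqueness in Theorem~\ref{theorem:B-normalized Ga action}, and conclude with Corollary~\ref{coro:reductive}(\ref{item:coro:reductive2}), using Theorem~\ref{thm:brion} for surjectivity. The one point you pass over is that the Setup for $\mathbf{G}$ requires the stabilizer of the base point to contain $R^{u}$ of a Borel subgroup opposite to $\mathbf{B}^{+}$; the paper arranges this by moving the base point to $g\mathbf{H}$ with $g\in\mathbf{B}^{+}$, whereas in your version it already holds at $eH$ for a suitable maximal torus of $\mathbf{B}^{+}$, since $e\mathbf{P}$ lies in the open orbit of the image of $B^{+}$, hence in the open $\mathbf{B}^{+}$-orbit, of $\mathbf{G}/\mathbf{P}\simeq G/P$, so $\mathbf{P}$ contains a Borel subgroup opposite to $\mathbf{B}^{+}$.
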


\begin{proof}
	We keep the notation of Proposition~\ref{prop:rep-by-Aut-b}.
    Let $O$ be the open $G$-orbit in $X$, and denote by $\alpha: G \rightarrow \mathbf{G}$ the natural morphism.
	Let $\mathbf{B}^{+}$ be a Borel subgroup of $\mathbf{G}$ containing $\alpha(B^{+})$.
    Since $(\C(O)^{\times})^{(\mathbf{B}^{+})}$ is a subgroup of $(\C(O)^{\times})^{(B^{+})}$ (cf. Section~\ref{subsection:horospherical}), $\Lambda_{O}^{\mathbf{B}^{+}}$ can be identified with a sublattice of $\Lambda_{O}^{B^{+}}$ via the embedding $\chi_{f} \mapsto \chi_{f} \circ \alpha$.
	By \cite[Theorem~4.4.1]{brionWonderful}, $\cD^{\mathbf{B}^{+}} = \cD^{B^{+}}$, i.e., $\mathbf{B}^{+}$-colors are same with $B^{+}$-colors, or equivalently, the open $B^{+}$-orbit $B^{+}.(e\mathbf{H}) \subset O$ is also a $\mathbf{B}^{+}$-orbit.
    Hence $\epsilon^{\mathbf{B}^{+}}(D) = \epsilon^{B^{+}}(D)|_{\Lambda_{O}^{\mathbf{B}^{+}}}$ for any $D \in \cD^{\mathbf{B}^{+}} = \cD^{B^{+}}$.

	Now using Proposition~\ref{prop:rep-by-Aut-b}, we consider $\cR^{+}_{\mathbf{G}}(X)$ with respect to $\mathbf{B}^{+}$.
	More rigorously, first we choose another base point $g\mathbf{H} \in O$ such that the unipotent radical of the opposite of $\mathbf{B}^{+}$ (with respect to a maximal torus) fixes $g\mathbf{H}$.
    In other words, the unipotent radical is contained in $\mathbf{H}' \coloneqq \Stab_{\mathbf{G}}(g\mathbf{H})= g\mathbf{H}g^{-1}$.
    In fact, we may assume that $g \in \mathbf{B}^{+}$.
    Indeed, observe that since $\mathbf{B}^{+}.(g\mathbf{H})$ is an open $\mathbf{B}^{+}$-orbit in $O$, it is same with $\mathbf{B}^{+}.(e\mathbf{H})$, and so there exists $b \in \mathbf{B}^{+}$ satisfying $g\mathbf{H} = b\mathbf{H}$ in $\mathbf{G}/\mathbf{H}$, and hence we may replace $g$ by $b$.
	Then for $\mathbf{P}' \coloneqq N_{\mathbf{G}}(\mathbf{H}') = g \mathbf{P}g^{-1}$ and the fiber bundle $\Phi' : X \rightarrow \mathbf{G}/\mathbf{P}'$, $\cR^{+}_{\mathbf{G}}(X)$ is defined as the set of Demazure roots $m' \in M_{\mathbf{P}'/\mathbf{H}'}$ of the $\mathbf{P}'/\mathbf{H}'$-toric variety $F' \coloneqq (\Phi')^{-1}(e\mathbf{P}')$ such that $\langle m',\, \epsilon^{\mathbf{B}^{+}}(\cD^{\mathbf{B}^{+}}) \rangle \ge 0$.
    Notice that $\Phi'$ is precisely the original fiber bundle $\Phi : X \rightarrow \mathbf{G}/\mathbf{P}$ with the different presentation induced by choosing a base point $g\mathbf{P} = \Phi(g\mathbf{H})\in \mathbf{G}/\mathbf{P}$.
    More formally, the fiber bundles $\Phi : X \rightarrow \mathbf{G}/\mathbf{P}$ and $\Phi' : X \rightarrow \mathbf{G}/\mathbf{P}'$ can be identified via the equivariant isomorphism $\mathbf{G}/\mathbf{P} \rightarrow \mathbf{G}/\mathbf{P}'$, $h\mathbf{P}\mapsto hg^{-1}\mathbf{P}'$.
    Namely, $F' = g.F$, and the $\mathbf{B}^{+}$-root subgroups associated to $\cR^{+}_{\mathbf{G}}(X)$ also preserve the fibers of $\Phi$.
    
	Recall that for $m' \in M_{\mathbf{P}'/\mathbf{H}'}$, the pairing $\langle m',\, \epsilon^{\mathbf{B}^{+}}(\cD^{\mathbf{B}^{+}}) \rangle$ is defined via the lattice isomorphism $\Lambda_{O}^{\mathbf{B}^{+}} \simeq M_{\mathbf{P}'/\mathbf{H}'}$, $\chi_{f} \mapsto \chi_{f|_{F'}}$.
	Since the tori $P/H$ and $\mathbf{P}'/\mathbf{H}'$ are isomorphic via the assignment $pH \mapsto (g\alpha(p)g^{-1})\mathbf{H}'$, we have a lattice isomorphism $M_{P/H} \simeq M_{\mathbf{P}'/\mathbf{H}'}$ compatible with the isomorphisms $\Lambda_{O}^{\mathbf{B}^{+}} \simeq M_{\mathbf{P}'/\mathbf{H}'}$ and $\Lambda_{O}^{B^{+}} \simeq M_{P/H}$ and the inclusion $\Lambda_{O}^{\mathbf{B}^{+}} \subset \Lambda_{O}^{B^{+}}$ (here, the compatibility follows from the choice $g \in \mathbf{B}^{+}$).
    In particular, the equality $\Lambda_{O}^{\mathbf{B}^{+}} = \Lambda_{O}^{B^{+}}$ holds.
	Moreover, since $F \rightarrow F'=g.F$, $x \mapsto g.x$ is a toric isomorphism with respect to the isomorphism $P/H \simeq \mathbf{P}'/\mathbf{H}'$, the lattice isomorphism $M_{P/H} \simeq M_{\mathbf{P}'/\mathbf{H}'}$ preserves the Demazure roots, i.e., it sends $\cR_{P/H}(F)$ onto $\cR_{\mathbf{P}'/\mathbf{H}'}(F')$.
	Therefore in $\Lambda_{O}^{\mathbf{B}^{+}} = \Lambda_{O}^{B^{+}}$, $\cR^{+}_{G}(X)$, $\cS^{+}_{G}(X)$ and $\cU^{+}_{G}(X)$ are identified with $\cR^{+}_{\mathbf{G}}(X)$, $\cS^{+}_{\mathbf{G}}(X)$ and $\cU^{+}_{\mathbf{G}}(X)$, respectively.
    Furthermore, the $\mathbf{B}^{+}$-root subgroups associated to $\cR^{+}_{\mathbf{G}}(X)$ coincide with the $B^{+}$-root subgroups associated to $\cR^{+}_{G}(X)$,
    since they preserve the fibers of $\Phi$, and since the $B^{+}$-action factors through the $\mathbf{B}^{+}$-action (cf. Theorem~\ref{theorem:B-normalized Ga action}).
    In particular, $U^{+}_{m'}$ for $m' \in \cS^{+}_{\mathbf{G}}(X)$ are same with $U^{+}_{m}$ for $m \in \cS^{+}_{G}(X)$.
	Hence by Corollary~\ref{coro:reductive} and Theorem~\ref{thm:brion}, $\mathbf{G}$, $\Aut_{\mathbf{G}}(X)$ and $U^{+}_{m}$ for $m \in \cS^{+}_{G}(X)$ generate a Levi subgroup of $\Aut^{0}(X)$.
	Finally, since the $G$-action factors through the $\mathbf{G}$-action, we see that $\Aut_{\mathbf{G}}(X) \subset \Aut_{G}(X) \subset \mathbf{G}$, proving the statement.
\end{proof}

\section{Application: K-unstability of projective bundles over rational homogeneous spaces}\label{section:applications}

In this section, we apply the results of Section \ref{section:maintheorem} to the case of projective bundles defined by totally decomposable vector bundles over rational homogeneous spaces, which are well-known examples of toroidal horospherical varieties.
As a corollary, we construct Fano $\P^1$-bundles which are K-unstable.

\begin{theorem}\label{theorem:application}
	Let $Y$ be a rational homogeneous space and  let $L_{1},\,\ldots,\,L_{k}$ be line bundles on $Y$, with $k\geq 2$. 
	Set $X\coloneqq \P_{Y}(L_{1} \oplus \cdots \oplus L_{k})$.

    Let $G$ be the product of $(\C^{\times})^{k}$ and the universal cover of $\Aut^{0}(Y)$. 
Let $T$ be a maximal torus and $B^-$ be a Borel subgroup of $G$ so that $T\subset B^-\subset G$, and let $B^+$ be its opposite Borel subgroup. Let $P$ be the parabolic subgroup of $G$ containing $B^{-}$ such that $Y \simeq G/P$. For each $i$, let $\chi_{i}$ be the character of $P$ such that $L_{i} = G \times^{P} \C_{\chi_{i}}$.

Then $X$ is a toroidal horospherical $G$-variety such that the following hold:
\begin{itemize}
    \item $\cR^{+}_{G}(X)$ consists of $\chi_{i} - \chi_{j}$ ($i \not= j$) such that $L_{i} \otimes L_{j}^{\vee}$ is nef on $Y$.
    \item $\cS^{+}_{G}(X)$ consists of $\chi_{i} - \chi_{j}$ ($i \not= j$) such that $L_{i} \simeq L_{j}$ as line bundles on $Y$.
\end{itemize}
In particular, $\Aut^{0}(X)$ is reductive if and only if for any $i\not= j$ such that $L_{i}\not\simeq L_{j}$, $L_{i} \otimes L_{j}^{\vee}$ is not nef.
\end{theorem}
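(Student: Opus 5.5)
Our plan is to realize $X$ as a parabolic induction and then read off Definition \ref{definition:horospherical roots} directly. Let $(\C^{\times})^{k}$ act on $L_{1}\oplus\cdots\oplus L_{k}$ by scaling the $i$-th summand through the $i$-th coordinate. The universal cover of $\Aut^{0}(Y)$ is simply connected, so every line bundle on $Y$ is $G$-linearized and $L_{i}=G\times^{P}\C_{\chi_{i}}$. We therefore let $P$ act on $\C^{k}$ via $\chi_{i}$ times the $i$-th coordinate of the $(\C^{\times})^{k}$-factor, which gives
\[
X\simeq G\times^{P}\P^{k-1}.
\]
The image of $P$ in $\PGL_{k}$ is the diagonal torus $S\simeq(\C^{\times})^{k}/\C^{\times}$. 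Hence the fiber $F=\P^{k-1}$ is an $S$-toric variety, and by the converse to Theorem \ref{thm:toroidal horo as fiber bundle} (parabolic induction of a toric variety), $X$ is toroidal horospherical with $H=\ker(P\to S)$. Here $M_{S}=\{\sum a_{i}e_{i}:\sum a_{i}=0\}$, where $e_{i}$ is the character of $P$ attached to the $i$-th coordinate, namely $\chi_{i}$ combined with the $i$-th torus factor. The fan of $\P^{k-1}$ has rays $u_{1},\dots,u_{k}$ with $\sum u_{i}=0$ and $\langle e_{i}-e_{j},u_{l}\rangle=\delta_{il}-\delta_{jl}$ (up to a global sign convention). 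The Demazure roots of $\P^{k-1}$ are then exactly $e_{i}-e_{j}$ for $i\neq j$, each with associated ray $u_{j}$. We abuse notation and write these roots as $\chi_{i}-\chi_{j}$.

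Next we compute the pairing with colors. By Proposition \ref{prop:color is coroot}, $\langle m,\epsilon^{+}(D_{\alpha})\rangle=\langle m|_{T},\check\alpha\rangle$ for $\alpha\in\Pi^{+}_{P}$. The $(\C^{\times})^{k}$-factor is central and contains no coroots, so $\langle e_{i}-e_{j},\check\alpha\rangle=\langle\chi_{i}-\chi_{j},\check\alpha\rangle$. Lemma \ref{lemma: key lemma}(\ref{item:keylemma1}) thus says that $e_{i}-e_{j}\in\cR^{+}_{G}(X)$ if and only if $\chi_{i}-\chi_{j}$ is $B^{+}$-dominant as a character of $P$.

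It remains to show that $G\times^{P}\C_{\chi}$ is nef on $G/P$ exactly when $\chi$ is $B^{+}$-dominant, and trivial exactly when $\langle\chi,\check\alpha\rangle=0$ for all $\alpha$. This is the one place requiring care, and it is the step we expect to be the main obstacle: we must check the sign conventions. Since $P\supset B^{-}$, Borel--Weil gives
\[
H^{0}\big(G/P,\,G\times^{P}\C_{\chi}\big)\simeq V(\chi)^{*}\neq0
\quad\text{iff $\chi$ is $B^{+}$-dominant}
\]
with this geometric convention (possibly after replacing $\chi$ by $-\chi$, which we would fix by matching the convention of Lemma \ref{lemma: key lemma}(\ref{item:keylemma2})). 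On $G/P$, nef equals globally generated equals effective, and all of these are detected by degrees on the Schubert curves $\overline{B^{+}s_{\alpha}P}$, which equal $\langle\chi,\check\alpha\rangle$. This proves the first bullet. A line bundle $L$ on $G/P$ is trivial exactly when both $L$ and $L^{\vee}$ are nef, since $\Pic(G/P)$ is detected by these curve degrees. This gives the second bullet, matching Lemma \ref{lemma: key lemma}(\ref{item:keylemma1}) ($\dim V(m)=1$).

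The reductivity criterion then follows from Corollary \ref{coro:reductive}(\ref{item:coro:reductive1}). Note that $-(e_{i}-e_{j})=e_{j}-e_{i}$ is again a Demazure root, so $e_{i}-e_{j}$ is semisimple exactly when both $L_{i}\otimes L_{j}^{\vee}$ and its dual are nef, that is, when $L_{i}\simeq L_{j}$. Consequently $\cU^{+}_{G}(X)=\emptyset$ if and only if no pair $i\neq j$ with $L_{i}\not\simeq L_{j}$ has $L_{i}\otimes L_{j}^{\vee}$ nef.
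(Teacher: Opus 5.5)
Your proposal is correct and follows essentially the same route as the paper. Both write $X\simeq G\times^{P}\P^{k-1}$, identify the Demazure roots of the fiber as the $\chi_i-\chi_j$, and use Lemma~\ref{lemma: key lemma}(\ref{item:keylemma1}) to turn the color condition into $B^{+}$-dominance. Both then convert dominance into nefness of $L_i\otimes L_j^{\vee}$ by Borel--Weil and conclude with Corollary~\ref{coro:reductive}.

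A few minor corrections:
\begin{itemize}
\item With the paper's conventions ($P\supset B^{-}$, $(g,v)\sim(gp^{-1},\chi(p)v)$) no sign flip is needed, and $H^{0}(G/P,G\times^{P}\C_{\chi})\simeq V(\chi)$ rather than $V(\chi)^{*}$.
\item Here $\overline{B^{+}s_{\alpha}P}$ is a Schubert divisor; the curves of degree $\langle\chi,\check\alpha\rangle$ are $\overline{B^{-}s_{\alpha}P}$.
\item Your appeal to parabolic induction implicitly uses $P=N_{G}(H)$. You should check this via Proposition~\ref{prop:normalizer-parabolic}, as the paper does.
\end{itemize}
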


\begin{proof}

    Write $G \coloneqq \tilde{A} \times (\C^{\times})^{k}$ where $\tilde{A}$ denotes the universal cover of $\Aut^{0}(Y)$. 
	Recall that any line bundle on $Y$ is linearizable with respect to the natural $\tilde{A}$-action on $Y$.
	Thus we may assume that each $L_{i}$ is $\tilde{A}$-linearized.
	Any point $x \in X$ can be written as $x = [l_{1}\colon\cdots\colon l_{k}]$ where $l_{i} \in (L_{i})_{y}$ is a point on the fiber of $L_{i}$ over $y \in Y$. Consider the $G$-action defined as follows:
	\[
	(g,\, z_{1},\, \ldots,\, z_{k}).x = [g.(z_{1}l_{1}) \colon \cdots \colon g.(z_{k}l_{k})]
	\]
    for $g \in \tilde{A}$, and $(z_{1}, \ldots,\, z_{k}) \in (\C^{\times})^{k}$.
	
	We claim that $X$ is a toroidal horospherical $G$-variety and $Y$ is the associated rational homogeneous space.
	For any nonzero $l_{i} \in (L_{i})_{e P}$, the $G$-orbit of $[l_{1}\colon \cdots \colon l_{k}]$ is open in $X$.
	Furthermore, since $R^{u}(B^{-})$ acts trivially on each $(L_{i})_{e P}$, it fixes the point $[l_{1}\colon \cdots \colon l_{k}]$, and hence $X$ is horospherical.
	Moreover, since $P$ acts on $(L_{i})_{e P}$ via the character $\chi_{i}$, for any point $x = [l_{1} \colon \cdots \colon l_{k}] \in \P (L_{1} \oplus \cdots \oplus L_{k})_{e P}$ such that $l_{i} \not=0$ for all $i$, we have
	\[
	H \coloneqq \Stab_{G}(x) = \bigcap_{i \not= j} \ker (\chi_{i} - \chi_{j}),
	\]
	and hence $P = N_{G}(H)$ by Proposition~\ref{prop:normalizer-parabolic}.
	In other words, $Y$ is the rational homogeneous space associated to the horospherical variety $X$.
    It follows that $X$ is furthermore toroidal, since it admits a $G$-equivariant morphism onto $Y = G/P$.

	On the other hand, the fiber $F$ of the projection $X \rightarrow Y$ over $e P$ is the projective space $\P^{k-1}$, and the $P$-action is given as follows:
	\[
	p.[l_{1} \colon \cdots \colon l_{k}] = [\chi_{1}(p)l_{1} \colon \cdots \colon \chi_{k}(p)l_{k}].
	\]
	This action induces the $S$-toric variety structure on $F$, with $S=P/H$.
	Since Demazure roots of $F$ are the $S$-roots of $\Aut^{0}(F)\simeq \PGL_k$, we have
	\[
	\cR_{S}(F) = \{\chi_{i} - \chi_{j} \colon i \not=j\}.
	\]
	Therefore, by Lemma \ref{lemma: key lemma}(\ref{item:keylemma1}), $\cR^{+}_{G}(X)$ consists of $\chi_{i} - \chi_{j}$ ($i\not=j$) that are $B^{+}$-dominant.
	By the Borel--Weil--Bott theorem, it is equivalent to the nefness of the line bundle
	\[
	G \times^{P} \C_{\chi_{i} - \chi_{j}} \simeq L_{i} \otimes L_{j}^{\vee}.
	\]
	Thus the statement follows from Corollary \ref{coro:reductive}.
\end{proof}

If $k=2$ and $L_1=\cO_Y$, we get the following:

\begin{corollary}\label{coro:P1 bundle}
    Let $Y$ be a rational homogeneous space, let $L$ be a line bundle on $Y$ and  set $X\coloneqq \P_Y(\cO_Y\oplus L)$. Then $\Aut^0(X)$ is reductive if and only if either $L$ is trivial, or neither $L$ nor $L^{\vee}$ is nef on $Y$. 
    
    In particular, if $Y$ has Picard number $1$, $\Aut^0(X)$ is reductive if and only if $X\simeq Y\times\P^1$.
\end{corollary}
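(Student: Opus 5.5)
The plan is to specialize Theorem~\ref{theorem:application} to $k=2$ with $L_{1}=\cO_{Y}$ and $L_{2}=L$. First I would fix an identification: the paper's convention for projectivization is geometric, so $X=\P_{Y}(\cO_{Y}\oplus L)$ falls under Theorem~\ref{theorem:application} with $(L_{1},L_{2})=(\cO_{Y},L)$. The only pairs are $(i,j)=(1,2)$ and $(2,1)$, giving the line bundles $L_{1}\otimes L_{2}^{\vee}=L^{\vee}$ and $L_{2}\otimes L_{1}^{\vee}=L$.

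The reductivity criterion of Theorem~\ref{theorem:application} then says: $\Aut^{0}(X)$ is reductive if and only if either $L_{1}\simeq L_{2}$, i.e. $L\simeq\cO_{Y}$, or both $L^{\vee}$ and $L$ fail to be nef. This is exactly the first assertion. The case distinction is clean: if $L$ is trivial, the condition over pairs with $L_{i}\not\simeq L_{j}$ is vacuous.

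For the Picard number one statement, I would use that on a rational homogeneous space $Y$ with $\Pic(Y)\simeq\Z$ the generator can be taken ample. Every line bundle is then $\cO_{Y}(a)$, and either $a\ge 0$, making $L$ nef, or $a\le 0$, making $L^{\vee}$ nef. Hence the alternative ``neither $L$ nor $L^{\vee}$ nef'' never occurs, so reductivity is equivalent to $L$ being trivial, i.e. $X\simeq Y\times\P^{1}$.

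For the converse direction I need that $X\simeq Y\times\P^{1}$ forces $L$ trivial, or at least that $\Aut^{0}(Y\times\P^{1})$ is reductive. The latter holds directly, since $\Aut^{0}(Y\times\P^{1})=\Aut^{0}(Y)\times\PGL_{2}$ by Blanchard's lemma applied to both projections. So the equivalence ``reductive $\iff X\simeq Y\times\P^{1}$'' follows without needing to distinguish $\P(\cO\oplus L)$ from the product for nontrivial $L$.

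The main point requiring care is the fact that on a Picard-rank-one homogeneous space every line bundle is nef or antinef. This is standard because the nef cone is spanned by the ample generator $\cO_{Y}(1)$. Everything else is bookkeeping from Theorem~\ref{theorem:application}.
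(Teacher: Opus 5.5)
Your proposal is correct and follows the paper's route: the corollary is obtained by specializing Theorem~\ref{theorem:application} to $k=2$ with $(L_1,L_2)=(\cO_Y,L)$. Your Picard-number-one argument, that every line bundle is then nef or anti-nef, correctly supplies the step the paper leaves implicit; the Blanchard argument for the converse is valid but unnecessary, since $Y\times\P^1=\P_Y(\cO_Y\oplus\cO_Y)$ is already covered by the first part with $L$ trivial.
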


\begin{example}
    Let $Y=(\P^1)^n$, and consider the $\P^1$-bundle $X=\P_{Y}(\cO_Y\oplus L)$, where $L=\cO_Y(a_1,\ldots,a_n)$, with $a_i\in \Z$ for every $i=1,\ldots,n$.
    By Corollary \ref{coro:P1 bundle}, $\Aut^0(X)$ is reductive if and only if either $a_1=\ldots=a_n=0$, or there exists $i\neq j$ such that $a_ia_j<0$, for $i,j=1,\ldots,n$.
    On the other hand, $X$ is Fano if and only if $a_i\in \{-1,0,1\}$ for every $i=1,\ldots,n$ (see \cite[Theorem 6.4.9]{CLS}).
    
    While in dimension $2$ the only toric Fano variety with a $\P^1$-bundle structure and reductive automorphism group is $\P^1\times\P^1$, in higher dimension this is not true.
    For instance, by the previous paragraph, $X=\P_{\P^1\times\P^1}(\cO_{\P^1\times\P^1}\oplus \cO_{\P^1\times\P^1}(1,-1))$ is Fano and $\Aut^{0}(X)$ is reductive.
\end{example}

Let us recall that a Fano variety is called \emph{K-unstable} if it is not \emph{K-semistable}, where we refer to \cite{XuBook} for details on K-stability. For a horospherical variety $X$, $X$ is K-polystable if and only if $X$ is K-semistable (\cite[Corollary 5.7]{DelcroixENS2020}).  Recall that if $X$ is K-polystable then $\Aut^0(X)$ is reductive (see \cite{Matsushima}, \cite{KStabilityReductive}). 
Using Corollary \ref{coro:P1 bundle}, we shall give several examples of Fano $\P^{1}$-bundles that are K-unstable. To this end, let us recall that, given a Fano variety $Y$ and $L_1,\ldots,L_{k}$ nef line bundles on $Y$ such that $K_Y^\vee\otimes L_1^\vee \otimes\ldots L_k^\vee$ is ample, then the $\P^{k-1}$-bundle $X=\P_{Y}(\oplus_{i=1}^k L_i^\vee)$ is a Fano variety as well (see \cite[Example~3.3(2)]{Debarre}). In the special case of $k=2$ and $L_1=\cO_Y$, applying Corollary \ref{coro:P1 bundle} we get the following:

\begin{corollary}\label{coro:KUnstableCriterion}
    Let $Y$ be a rational homogeneous space, and let $L$ be a nontrivial nef line bundle such that $K_Y^{\vee}\otimes L^{\vee}$ is ample. Then $ X\coloneq \P_{Y}(\cO_Y\oplus L^{\vee})$ is a smooth K-unstable Fano variety.
\end{corollary}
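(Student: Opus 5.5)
The plan has three parts: show that $X$ is a smooth Fano variety, show that $\Aut^{0}(X)$ is not reductive, and then pass from non-reductivity to K-unstability via the Matsushima obstruction together with Delcroix's equivalence of K-semistability and K-polystability for horospherical varieties.

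\emph{Fano and smooth.} Smoothness is immediate, since $X$ is a $\P^1$-bundle over the smooth variety $Y$. For the Fano property, apply the result recalled just before the statement (\cite[Example~3.3(2)]{Debarre}) with $k=2$, $L_1=\cO_Y$ and $L_2=L$. Both line bundles are nef, and $K_Y^\vee\otimes \cO_Y^\vee\otimes L^\vee = K_Y^\vee\otimes L^\vee$ is ample by hypothesis. Hence $X=\P_Y(\cO_Y\oplus L^\vee)$ is Fano. Note that $Y$ itself is Fano, being rational homogeneous.

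\emph{Non-reductivity.} Apply Corollary~\ref{coro:P1 bundle} to the line bundle $L^\vee$. By that corollary, $\Aut^0(X)$ is reductive if and only if either $L^\vee$ is trivial or neither $L^\vee$ nor $(L^\vee)^\vee=L$ is nef. Here $L$ is nontrivial, so $L^\vee$ is nontrivial, and $L$ is nef. Both alternatives fail, so $\Aut^0(X)$ is not reductive. Concretely, in the notation of Theorem~\ref{theorem:application}, the root $\chi_L-\chi_{\cO}$ is a unipotent $B^+$-root.

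\emph{K-unstability.} By Theorem~\ref{theorem:application}, $X$ is a toroidal horospherical variety. Suppose $X$ were K-semistable. Then by \cite[Corollary 5.7]{DelcroixENS2020} it would be K-polystable. By the Matsushima-type obstruction \cite{Matsushima, KStabilityReductive}, $\Aut^0(X)$ would then be reductive, contradicting the previous step. Hence $X$ is K-unstable.

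\emph{Expected obstacle.} The only step needing real care is the Fano claim. One must check that the cited Debarre criterion is stated with the same projectivization convention as in this paper: the paper uses the geometric convention, so $\P(\cO\oplus L^\vee)$ has $\cO(1)$-type twists by $L$. If the conventions differ, the fix is to compute directly. One would write $-K_X = 2\xi + \pi^*(K_Y^\vee\otimes L)$ for the appropriate tautological class $\xi$, and verify ampleness on the two sections and on the fibers. Everything else is a direct citation of earlier results.
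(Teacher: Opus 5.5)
Your proposal is correct and follows the paper's argument: the Fano property comes from Debarre's criterion with $L_1=\cO_Y$, $L_2=L$, the non-reductivity of $\Aut^0(X)$ comes from Corollary~\ref{coro:P1 bundle}, and K-unstability comes from Delcroix's equivalence of K-semistability and K-polystability for horospherical varieties together with the Matsushima obstruction. Your worry about projectivization conventions is harmless, because changing convention replaces $\cO_Y\oplus L^{\vee}$ by its dual $\cO_Y\oplus L$, and $\P_Y(\cO_Y\oplus L)\simeq\P_Y\bigl((\cO_Y\oplus L)\otimes L^{\vee}\bigr)=\P_Y(L^{\vee}\oplus\cO_Y)$, so both conventions give the same variety.
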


Notice that certain $\P^1$-bundles over Fano varieties with Fano index $\ge 2$ have been proven to be K-unstable (see \cite[Theorem 1.1]{ZhangZhou}).
Here, recall that the \emph{Fano index} of a smooth Fano variety is the maximal integer $r$ such that $K_X^{\vee}\simeq L^{\otimes r}$, with $L\in \Pic(X)$.
With Corollary \ref{coro:KUnstableCriterion}, one may check K-unstability of $\P^{1}$-bundles over rational homogeneous spaces of Fano index 1.
In fact, even though no rational homogeneous space of Picard number 1 is of Fano index 1 (see \cite[Proposition, \S4.6]{AkhBook}), their products can have Fano index 1.
Let us conclude by presenting the simplest non-toric example:

\begin{example}\label{example:KunstableFanoIndex1}
    Let $Y=\P^1\times\Q^3$, where $\Q^3$ is the quadric threefold of $\P^4$, and choose $L=\cO_{\P^{1}}(1) \boxtimes \cO_{\Q^{3}}(1)$.
    Since $K_{Y}^{\vee} \simeq \cO_{\P^{1}}(2) \boxtimes \cO_{\Q^{3}}(3)$, the Fano index of $Y$ is the greatest common divisor of 2 and 3, i.e., 1.
    Furthermore, by Corollary \ref{coro:KUnstableCriterion}, $X\coloneqq \P_{Y}(\cO_Y\oplus L^{\vee})$ is a K-unstable Fano $\P^1$-bundle.
\end{example}


\end{document}